\providecommand{\U}[1]{\protect\rule{.1in}{.1in}}
\newcommand{\Gal}{{\mathrm{Gal}}}
\newcommand{\ord}{{\mathrm{ord}}}
\newcommand{\rank}{{\mathrm{rank}}}
\newcommand{\Pic}{\mathrm{Pic}}
\renewcommand{\mod}{\ \mathrm{mod}\ }
\numberwithin{equation}{section}
\theoremstyle{remark}
\newtheorem{defi}{\rm{\textbf{Definition}}}[section]
\newtheorem{exam}[defi]{\rm{\textbf{Example}}}
\newtheorem{rem}[defi]{\rm{\textbf{Remark}}}
\theoremstyle{plain}
\newtheorem{thm}[defi]{\rm{\textbf{Theorem}}}
\newtheorem{cor}[defi]{\rm{\textbf{\textbf{Corollary}}}}
\newtheorem{lem}[defi]{\rm{\textbf{Lemma}}}
\newtheorem{prop}[defi]{\rm{\textbf{\textbf{Proposition}}}}
\begin{document}

  \title[On indefinite and potentially universal quadratic forms over number fields] {On indefinite and potentially universal quadratic forms over number fields}

 \author{Fei Xu}
 \address{School of Mathematical Science \\ Capital Normal University \\ Beijing 100048, China}
 \email{xufei@math.ac.cn}

 \author{Yang Zhang}
 \address{School of Mathematical Science \\ Capital Normal University \\ Beijing 100048, China}
 \email{2170502061@cnu.edu.cn}

 \date{}

\begin{abstract}  A number field $k$ admits a binary integral quadratic form which represents all integers locally but not globally if and only if the class number of $k$ is bigger than one. In this case, there are only finitely many classes of such binary integral quadratic forms over $k$. 

A number field $k$ admits a ternary integral quadratic form which represents all integers locally but not globally if and only if the class number of $k$ is even. In this case, there are infinitely many classes of such ternary integral quadratic forms over $k$.

An integral quadratic form over a number field $k$ with more than one variables represents all integers of $k$ over the ring of integers of a finite extension of $k$ if and only if this quadratic form represents $1$ over the ring of integers of a finite extension of $k$.
 \end{abstract}

\maketitle

\section{Introduction}

Universal integral quadratic forms over $\Bbb Z$ have been studied extensively by Dickson \cite{Di} and Ross \cite{Ro} for both positive definite and indefinite cases in the 1930s.  A positive definite integral quadratic form is called universal if it represents all positive integers. Ross pointed out that there is no ternary positive definite universal form over $\Bbb Z$ in \cite{Ro} (see a more conceptual proof in \cite[Lemma 3]{EK}).
However, this result is not true over totally real number fields. In \cite{Ma}, Maass proved that $x^2+y^2+z^2$ is a 
positive definite universal quadratic form over the ring of integers of $\Bbb Q(\sqrt{5})$. Chan, Kim and Raghavan further determined all possible quadratic real fields which admit a positive definite universal ternary quadratic form and listed all positive definite universal ternary quadratic forms in \cite{CKR}. A symbolic achievement for positive definite universal quadratic forms over $\Bbb Z$ is the 15-theorem first proved by Conway and Schneeberger and simplified by Bhargava in \cite{Bh}, which claims that a positive definite classical integral quadratic form $f$ is universal if and only if $f$ represents all positive integers from 1 to 15.
Lee in \cite{Lee} established an analogy of Conway and Schneeberger's 15-theorem over the ring of integers of $\Bbb Q(\sqrt{5})$. In fact, for any fixed totally real field, universal quadratic forms always exist by \cite{HKK}. By the reduction theory, Kim and Earnest \cite[Corollary 1]{Ea} showed that 
there are only finitely many classes of positive definite quadratic forms over a given totally real field with the minimal rank. The natural problems remain to determine the minimal rank for existing positive definite universal quadratic forms and classify all universal forms of the minimal rank over a given totally real field. Indeed, 
there are many articles concerning these subjects such as \cite{BK}, \cite{De}, \cite{EK}, \cite{KS}, \cite{Ki}, \cite{Sa}, \cite{Ya} etc. 

Indefinite quadratic forms have significantly different features from positive definite quadratic forms. An indefinite integral quadratic form is called universal if it represents all integers. Since the representability of indefinite quadratic forms with more than three variables satisfies the local-global principle by strong approximation for spin groups, the universal property of indefinite quadratic forms with more than three variables satisfies the local-global principle. By inspecting the integral spinor norms, one concludes that the class number of indefinite universal ternary quadratic forms over $\Bbb Z$ is one. Therefore the universal property of indefinite ternary quadratic forms over $\Bbb Z$  also satisfies the local-global principle. However, the local-global principle of universal property is not true any more over a number field because the class group of the ground number field will be involved. Indeed, Estes and Hsia \cite{EH} have determined all imaginary quadratic fields such that $x^2+y^2+z^2$ is locally universal but not globally. It is natural to ask what kind of number fields admit a binary or ternary integral quadratic form which is locally universal but not globally. We give a complete answer to this question (see Theorem \ref{infinite}).

\begin{thm}\label{m-1}  Let $k$ be a number field.

(1)  There is a binary integral indefinite quadratic form which is locally universal but not globally over $k$ if and only if the class number of $k$ is bigger than one. In this case, there are only finitely many classes of such binary integral quadratic forms. 

(2) There is a ternary integral indefinite quadratic form which is locally universal but not globally over $k$ if and only if the class number of $k$ is even. In this case, there are infinitely many classes of such ternary integral quadratic forms.

 \end{thm}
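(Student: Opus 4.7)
The plan rests on the classical dictionary between binary integral quadratic forms over $\mathcal{O}_k$ and pairs $(K,[I])$ with $K/k$ a separable quadratic extension and $[I]$ an ideal class of $\mathcal{O}_K$ (or a suitable order). Under this dictionary the form $f_I$ attached to $[I]$ represents $n\in\mathcal{O}_k$ globally exactly when there is $\mathfrak{a}\in[I]$ with $n\mathcal{O}_K=\mathfrak{a}\bar{\mathfrak{a}}$ (together with a compatible generator), while local universality of $f_I$ is equivalent to $[I]$ belonging to the principal genus of the quadratic extension. For the ``only if'' direction of part~(1), when $\mathrm{Cl}(k)$ is trivial I would show, via the Hasse norm theorem for the cyclic extension $K/k$ and the triviality of $\mathrm{Cl}(k)$, that the principal genus collapses to the principal class, so every locally universal form is already globally universal. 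For the converse, starting from a non-trivial element of $\mathrm{Cl}(k)$, I would construct a suitable quadratic extension $K/k$ and a non-principal class $[I]\in\mathrm{Cl}(\mathcal{O}_K)$ in the principal genus whose associated indefinite $f_I$ is locally but not globally universal. Finiteness of such classes follows because locally universal binary forms have discriminant restricted to a finite controlled set modulo squares of units, and within each such discriminant only finitely many ideal classes can qualify.

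For part~(2) I would invoke strong approximation for the spin group (Eichler--Kneser): for an indefinite ternary $f$ the class coincides with the spinor genus, so the only obstruction to local-to-global universality is the spinor-genus decomposition of the genus. Computing the spinor class field attached to a ternary quadratic lattice identifies the relevant obstruction group as a quotient involving $\mathrm{Cl}(k)/\mathrm{Cl}(k)^2$, which is non-trivial precisely when $h_k$ is even; a matching direct genus/spinor-genus analysis rules out counterexamples when $h_k$ is odd. The infinitude statement I would establish by scaling a given $f$ by prime ideals $\mathfrak{p}$ lying in prescribed ray classes and showing that the resulting ternary forms distribute among infinitely many distinct spinor genera, so lie in pairwise distinct classes; the fact that ternary (as opposed to binary) genera are infinite families avoids the finiteness obstruction of part~(1).

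The main obstacle is the constructive direction in both parts: explicitly producing the required classes with prescribed genus-theoretic behaviour and verifying non-global universality. In part~(1) this reduces to controlling the norm map $N_{K/k}:\mathrm{Cl}(\mathcal{O}_K)\to\mathrm{Cl}(\mathcal{O}_k)$ and choosing $K$ so that the principal genus is strictly larger than the principal class, while in part~(2) it reduces to pinning down the image of the spinor norm $\theta$ on the adelic orthogonal group of $f$ and ensuring it lies in a proper subgroup of the relevant idele class quotient exactly when $h_k$ is even. Both require careful local-global arguments tailored to the class-group data of $k$, and the step producing infinitely many inequivalent ternary examples will demand an additional argument (likely a Chebotarev-type density for primes in suitable ray classes) beyond the single-example construction.
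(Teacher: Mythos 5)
There are genuine gaps in both parts, and the central one in part (1) is a wrong criterion. You assert that local universality of a binary form is equivalent to its class lying in the principal genus of a quadratic extension $K/k$. That is false: local universality means $Q(L_v)=\frak o_{k_v}$ for \emph{every} finite $v$, and the local analysis at dyadic primes (Corollary \ref{binary}) forces $L_v\cong 2^{-1}A(0,0)$ there, hence forces $L$ to be an isotropic $\frac{1}{2}\frak o_k$-modular lattice, i.e.\ a hyperbolic plane $L=\frak a x+\frak a^{-1}y$ with $Q(x)=Q(y)=0$, $B(x,y)=\frac12$. Your dictionary with ideal classes of a genuine quadratic extension never sees these lattices (the relevant ``extension'' $k(\sqrt{-\det L})$ is split), and your criterion produces false positives: over $\BZ$ the indefinite forms of discriminant $229$ form a single genus with three classes, so two non-principal classes lie in the principal genus, yet $h_{\BQ}=1$; these forms are simply not locally universal (e.g.\ at $229$), which your criterion does not detect. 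Once one has the hyperbolic-plane normal form, the whole of part (1) is immediate: $L$ represents $1$ iff $\frak a$ is principal (unique factorization of ideals), and the proper classes are in bijection with $\Pic(\frak o_k)$, giving both the criterion $h_k>1$ and finiteness. Your finiteness argument via ``discriminants in a finite set'' is not needed and would not by itself bound the number of classes.

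In part (2) your necessity argument (spinor norms of locally universal lattices contain all local units, so the number of proper spinor genera divides the $2$-part of $h_k$) matches the paper's Proposition \ref{one-class}. But for sufficiency you conflate ``the genus splits into several spinor genera'' with ``some lattice in the genus fails to be universal.'' These are not the same: an integer represented by the genus is represented by \emph{every} class of an indefinite ternary genus unless it is a \emph{spinor exceptional} integer, and a genus can have many classes with no exceptional integers at all (the paper's Example 3.3 exhibits a universal $x^2+y^2+z^2$ whose genus has $2^s$ classes). The missing ingredient is the Schulze-Pillot criterion (Theorem \ref{SP}): one must build a genus, here $(\frak o_kx+\frak o_ky)\perp\frak o_kz$ with $Q(z)=4a$ and $k(\sqrt a)/k$ unramified (available exactly when $2\mid h_k$), and verify $\theta_v(X(L_v,1))=N_{K_{\frak P}/k_v}(K_{\frak P}^\times)$ at every place so that $1$ is exceptional; only then does some class in the genus fail to represent $1$. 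Finally, your infinitude step of ``scaling $f$ by prime ideals'' would destroy local universality ($\frak n$ would no longer be $\frak o_k$); the correct move is to shrink only the rank-one component, $M_i=(\frak a x+\frak a^{-1}y)\perp\frak p_i z$ for principal primes $\frak p_i$, which stays locally universal, still omits $1$ by inclusion in $M$, and yields distinct classes because the determinants differ.
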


\begin{rem} Throughout this paper, an integral quadratic form  means a quadratic form with integral coefficients as a polynomial but not necessarily a classical integral quadratic form (with an integral matrix). By Theorem \ref{infinite} (1), there is no binary universal classical integral quadratic forms. The integral indefinite ternary quadratic forms constructed in the proof of  Theorem \ref{infinite} (2) are not classically integral. We will investigate the corresponding result for classical integral quadratic forms in future.  
\end{rem}

Dickson in \cite[Theorem 22]{Di} (see also \cite[Theorem 5]{Ro}) proved that a ternary indefinite universal quadratic form over $\Bbb Z$ has non-trivial zero over $\Bbb Q$. Ross in \cite[Theorem 6]{Ro} further proved that every ternary quadratic form with non-trivial zero over $\Bbb Q$ and square-free discriminant is universal over $\Bbb Z$. We will point out that Dickson's result still holds over number fields (see Proposition \ref{ternary-glo}) but Ross' result does not hold over general number fields any more (see Example \ref{ross-glo}).  Moreover, we also show that one can not expect the result like Conway and Schneeberger's 15-theorem for indefinite universal quadratic forms (see Proposition \ref{counter-glo}).

Since the universal property for both positive definite case and indefinite case is not stable under base change, we consider the following generalization of universal quadratic forms.

\begin{defi} Let $f$ be an integral quadratic form $f$ over the ring of integers $\frak o_k$ of a number field $k$.

i)  We say $f$ is potentially universal over $k$ if there is a finite extension $K/k$ such that every element in $\frak o_k$ can be represented by $f$ over the ring of integers of $K$. 

ii)  We say an element $\alpha$ in $\frak o_k$ is potentially represented by $f$ over $k$ if $\alpha$ can be represented by $f$ over the ring of integers of a finite extension $K/k$.
\end{defi}

One has the following theorem.

\begin{thm} \label{m-2} Let $f$ be an integral quadratic form $f$ over the ring of integers $\frak o_k$ of a number field $k$ with more than one variables. Then $f$ is potentially universal over $k$ if and only if  1 is potentially represented by $f$ over $k$. 
\end{thm}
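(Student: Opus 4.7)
The forward direction will be immediate: if $K/k$ is a finite extension witnessing potential universality of $f$, then $1 \in \mathfrak{o}_k$ is in particular represented by $f$ over $\mathfrak{o}_K$, so $1$ is potentially represented.

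For the converse, suppose $f(\alpha) = 1$ with $\alpha \in \mathfrak{o}_{K_0}^n$ for some finite $K_0/k$, with $n \ge 2$. The plan is to exhibit a single finite extension $L/K_0$ such that every $\beta \in \mathfrak{o}_k$ is represented by $f$ over $\mathfrak{o}_L$. The key identity, valid for any isotropic vector $v \in \mathfrak{o}_L^n$, is
\[
f(\alpha + tv) = 1 + t B(\alpha, v) \qquad (t \in \mathfrak{o}_L),
\]
where $B$ denotes the symmetric bilinear form associated to $f$. If $c := B(\alpha, v) \in \mathfrak{o}_L^\times$, then for every $\beta \in \mathfrak{o}_k$ the element $t := c^{-1}(\beta - 1)$ lies in $\mathfrak{o}_L$ and $\alpha + tv \in \mathfrak{o}_L^n$ represents $\beta$. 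The existence of an isotropic $v \in \mathfrak{o}_L^n$ with $B(\alpha, v) \ne 0$ is elementary: $f$ is isotropic over $\bar{k}$ (any nondegenerate quadratic form in $\ge 2$ variables is), and since $f|_{\alpha^\perp}$ is nondegenerate of rank $n - 1$ the isotropic cone of $f$ cannot lie inside $\alpha^\perp$; passing to a finite $L_0/K_0$ over which $v$ is defined and clearing denominators puts such a $v$ in $\mathfrak{o}_{L_0}^n$.

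The hard part will be to force $c$, or a combination of several such bilinear values, to generate the unit ideal after a further finite extension. For $n \ge 4$ the Witt index of $f \otimes L$ is at least two for $L$ large enough, so one can choose a totally isotropic integral plane spanned by $v_1, v_2 \in \mathfrak{o}_L^n$ with $B(v_1, v_2) = 0$; then $f(\alpha + sv_1 + tv_2) = 1 + sB(\alpha, v_1) + tB(\alpha, v_2)$ covers $\mathfrak{o}_L$ once $B(\alpha, v_1)$ and $B(\alpha, v_2)$ together generate $\mathfrak{o}_L$, and since a basis change inside a fixed plane preserves the generated ideal, one must instead vary across the Grassmannian of totally isotropic planes and enlarge $L$ until a coprime pair appears. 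For the low-dimensional cases $n = 2, 3$ the Witt index of $f$ is at most one, so no such totally isotropic plane exists; here I would factor $f$ as a product of two linear forms over a quadratic extension of $K_0$ and reduce the representability of $\beta$ to a factorization of $a\beta$ in $\mathfrak{o}_L$ (where $a$ is the leading coefficient of the relevant variable) subject to a linear congruence condition on the two factors, then enlarge $L$ to principalize the obstructing ideal classes and absorb the ramification of the discriminant of $f$ so that the factorization becomes solvable uniformly in $\beta$. This last uniformity in the low-dimensional case is the main technical obstacle I anticipate, since the ``line through $\alpha$'' trick alone does not produce all of $\mathfrak{o}_k$ whenever $c$ fails to be a unit, and neither does any individual choice of factorization.
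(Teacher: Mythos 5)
Your forward direction and the key identity $f(\alpha+tv)=1+2tB(\alpha,v)$ for isotropic $v$ (note the factor $2$ with the paper's normalization $B(x,y)=\tfrac12(Q(x+y)-Q(x)-Q(y))$) are fine, but the converse has a genuine gap exactly where you anticipate one, and that gap is the entire content of the theorem. The crucial case is $n=2$: the paper's own proof immediately reduces everything to a binary sublattice $\frak o_k x\perp\frak o_k y\cong\langle 1,\delta\rangle$ with $Q(x)=1$, so nothing is gained by treating $n\geq 4$ separately. For $\langle 1,\delta\rangle$ with $\delta$ a non-unit, your unit-$c$ trick provably cannot work: every integral isotropic vector has the form $v=(\pm\sqrt{-\delta}\,s,\,s)$ with both $s$ and $\sqrt{-\delta}\,s$ integral, so $c=B(\alpha,v)=\pm\sqrt{-\delta}\,s$ satisfies $N(c)^2=\pm N(\delta)N(s)^2$ and is never a unit. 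Your fallback for $n=2,3$ --- factoring $f$ into linear forms and solving a factorization of $a\beta$ with congruence conditions, \emph{uniformly} in $\beta$ --- is precisely the hard uniformity problem, and you leave it unresolved (you say so yourself). Your $n\geq 4$ argument likewise asserts, without proof, that after a finite extension some totally isotropic integral pair $v_1,v_2$ has $B(\alpha,v_1)$ and $B(\alpha,v_2)$ generating the unit ideal.

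The paper closes this gap with a multiplicative rather than additive device. After reducing to $M\cong\langle 1,\delta\rangle$, it proves (Lemma \ref{rep}) that there is a \emph{single} finite extension $F/k$ containing $\sqrt{\delta}$ and finitely many elements $b_1,\dots,b_n$ such that every nonzero $\alpha\in\frak o_k$ factors as $\alpha=b_1^{r_1}\cdots b_n^{r_n}(1+2\rho(\alpha)\sqrt{\delta})$ with $r_i\geq 0$ and $\rho(\alpha)\in\frak o_F$; the $b_i$ come from the principal ideal theorem applied to the primes above $2\sqrt{\delta}$ in the Hilbert class field of $k(\sqrt{\delta})$, Dirichlet's unit theorem, and the surjectivity of $\frak o_{\bar k}^\times\rightarrow(\frak o_{\bar k}/\frak a)^\times$ (Lemma \ref{surjective}). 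Then over $K=F(\sqrt{-1},\sqrt{b_1},\dots,\sqrt{b_n})$ the identity $(1+\sqrt{\delta}\rho)^2+\delta(\sqrt{-1}\rho)^2=1+2\sqrt{\delta}\rho$ exhibits each such $\alpha$ in $Q(M\otimes_{\frak o_k}\frak o_K)$, with $K$ independent of $\alpha$. Some input of this strength --- producing one extension that works for all $\beta$ at once --- is what your sketch is missing, and without it the low-dimensional case does not go through.
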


An immediate consequence of this theorem is that the potentially universal property is stable under base change (see Corollary \ref{st}).

Terminology and notation are standard if not explained and are adopted from \cite{OM}. Let $k$ be a number field and $\frak o_k$ be the ring of integers of $k$. Let $\Omega_k$ be the set of all primes of $k$ and $\infty_k$ be the set of all archimedean primes. For any $v\in \Omega_k$, we denote $k_v$ the completion of $k$ with respect to $v$. When $v\in \Omega_k\setminus \infty_k$, the completion of $\frak o_k$ with respect to $v$ inside $k_v$ is denoted by $\frak o_{k_v}$ and the group of units of $\frak o_{k_v}$ is denoted by $\frak o_{k_v}^\times$. We fix a prime element $\pi_v$ of $\frak o_{k_v}$ for each $v\in \Omega_k\setminus \infty_k$. A non-degenerated quadratic space $(V, Q)$ is defined to be a vector space $V$ over $k$ with a map $Q: V\rightarrow k$ such that $$B(x,y)=\frac{1}{2}(Q(x+y)-Q(x)-Q(y))$$ is a non-degenerated bilinear map on $V\times V$.  Write $$ O(V)=\{ \sigma \in GL(V): \ Q(\sigma x) = Q(x),  \ \forall x\in V \}  \ \ \ \text{and} \ \ \ O^+(V)= \{ \sigma \in O(V): \ \det(\sigma)=1 \} . $$
The adelic group of $O(V)$ and $O^+(V)$ are denoted by $O_\Bbb A(V)$ and $O^+_\Bbb A(V)$ respectively.  

A finitely generated $\frak o_k$-module $L$ is called an $\frak o_k$-lattice if $(kL, Q)$ is a non-degenerated quadratic space over $k$.
We use $\frak s(L)$ and $\frak n(L)$ to denote the fractional ideals of $\frak o_k$ generated by $B(x, y)$ with $x,y\in L$ and $Q(x)$ with $x\in L$ respectively. Write 
$$Q(L) = \{ Q(x) : \ x\in L \} .$$
Similarly, for any $\frak o_k$-lattice $L$, we write $L_v$ for the completion of $L$ with respect to $v\not\in \infty_k$ and for $k_v L$ with $v\in \infty_k$. Moreover, we also have an $\frak o_{k_v}$-lattice with the same notions as above for a non-archimedean prime $v$ of  $k$. For $\alpha, \beta, \gamma  \in \frak o_{k_v}$, we denote $\gamma A(\alpha, \beta)$ a binary lattice $$ L=\frak o_{k_v} x + \frak o_{k_v} y  \ \ \ \text{with} \ \ \ Q(x)=\alpha \gamma, \ \ B(x,y)=\gamma \ \ \text{and} \ \ Q(y) = \beta \gamma. $$

The paper is organized as follows. In \S 2, we classify all local universal lattices of arbitrary rank over non-dyadic local fields and of rank less than 4 over dyadic local fields. Then we apply these result to study indefinite universal quadratic forms over number fields and give a proof of Theorem \ref{m-1} in \S 3. We prove Theorem \ref{m-2} in \S 4.

\section{Locally universal conditions}\label{local}

In this section, we study the local universal property over a local field. 

\begin{defi} An $\frak o_{k_v}$-lattice $L$ is called universal if $Q(L)=\frak o_{k_v}$ for $v\in \Omega_k\setminus \infty_k$.

\end{defi}

One has the following criterion to verify the local universal lattices.

\begin{lem} \label{cri} Let $L$ be an $\frak o_{k_v}$-lattice. Then $L$ is universal if and only if $$(\frak o_{k_v}^\times \cup \pi_v \frak o_{k_v}^\times)  \subseteq Q(L) \subseteq \frak o_{k_v} . $$
\end{lem}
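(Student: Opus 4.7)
The plan is to prove the nontrivial direction; the ``only if'' direction is immediate from the definitions, since universality means $Q(L)=\frak o_{k_v}$, which contains both $\frak o_{k_v}^\times$ and $\pi_v\frak o_{k_v}^\times$, and $Q(L)\subseteq \frak o_{k_v}$ is part of $L$ being an $\frak o_{k_v}$-lattice with integral norm values.

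For the ``if'' direction, assume $(\frak o_{k_v}^\times\cup\pi_v\frak o_{k_v}^\times)\subseteq Q(L)\subseteq\frak o_{k_v}$. First, I would reduce to representing nonzero elements (the value $0=Q(0)$ is obvious). Then I would use the fact that every nonzero $\alpha\in\frak o_{k_v}$ can be written uniquely as $\alpha=u\pi_v^n$ with $u\in\frak o_{k_v}^\times$ and $n\ge 0$, and split into two cases by the parity of $n$. If $n=2m$, rewrite $\alpha=u(\pi_v^m)^2$; by hypothesis there exists $x\in L$ with $Q(x)=u$, and then $\pi_v^m x\in L$ satisfies $Q(\pi_v^m x)=\pi_v^{2m}u=\alpha$. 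If $n=2m+1$, rewrite $\alpha=(u\pi_v)(\pi_v^m)^2$; by hypothesis there exists $y\in L$ with $Q(y)=u\pi_v$, and then $\pi_v^m y\in L$ gives $Q(\pi_v^m y)=\alpha$.

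Since in either case $\alpha\in Q(L)$, combined with the given reverse inclusion we conclude $Q(L)=\frak o_{k_v}$, i.e.\ $L$ is universal.

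This proof is essentially a one-line scaling argument; there is no real obstacle. The only conceptual point is recognizing that it suffices to represent elements with $v$-adic valuation $0$ or $1$, because multiplication by $\pi_v^m$ on a lattice vector rescales the norm by $\pi_v^{2m}$, which is why only the two residue classes of $\ord_v(\alpha)$ modulo $2$ need to be checked. This observation is what makes the criterion useful in the later sections: verifying local universality reduces from checking all of $\frak o_{k_v}$ to checking the two ``square-class shells'' $\frak o_{k_v}^\times$ and $\pi_v\frak o_{k_v}^\times$.
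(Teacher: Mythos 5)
Your proof is correct and follows essentially the same route as the paper: both arguments reduce to the observation that $Q(\pi_v^m x)=\pi_v^{2m}Q(x)$, so representing the two shells $\frak o_{k_v}^\times$ and $\pi_v\frak o_{k_v}^\times$ yields every $\pi_v^n\frak o_{k_v}^\times$ with $n\ge 0$, and hence all of $\frak o_{k_v}\setminus\{0\}$. Your write-up merely makes the parity split and the scaling step more explicit than the paper's version.
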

\begin{proof} Since $(\frak o_{k_v}^\times \cup \pi_v \frak o_{k_v}^\times)  \subseteq Q(L)$, one gets that $\pi_v^k \frak o_{k_v}^\times   \subseteq Q(L)$ for any integer $k\geq 0$. 
Since $$\frak o_{k_v}\setminus \{0 \} =\bigcup_{k\geq 0}  \pi_v^k \frak o_{k_v}^\times , $$  one concludes $Q(L)= \frak o_{k_v}$ as desired. 
\end{proof}

\subsection{Non-dyadic cases}

By using Lemma \ref{cri}, we can determine the universal lattices in term of Jordan splittings over non-dyadic local fields. 

\begin{prop}\label{nondyadic} Let $L$ be an $\frak o_{k_v}$-lattice $L$ with the Jordan splitting $$L= L_1\perp L_2\perp \cdots \perp L_t $$ over a non-dyadic local field $k_v$. 
Then $L$ is universal if and only if $L_1$ is unimodular such that one of the following conditions holds

(1)  $\rank(L_1)\geq 3$ or $L_1=A(0,0)$.

(2) $\rank (L_1)=2$ with $L_1\ncong A(0,0)$ and $L_2$ is $(\pi_v)$-modular with $rank (L_2)\geq 2$.

\end{prop}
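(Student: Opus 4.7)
The plan is to apply \arf{cri} and analyze separately when $Q(L)$ covers the two ``layers'' $\frak o_{k_v}^\times$ and $\pi_v\frak o_{k_v}^\times$. Order the Jordan decomposition so that the scale exponents satisfy $e_1<e_2<\cdots$, and write each component in the form $L_i\cong \pi_v^{e_i}U_i$ with $U_i$ unimodular. Since $k_v$ is non-dyadic, scale equals norm on each Jordan component, so $Q(L_i)=\pi_v^{e_i}Q(U_i)$ and the integrality condition $Q(L)\subseteq \frak o_{k_v}$ is equivalent to $e_1\geq 0$.

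For representing units, if $e_1\geq 1$ then $Q(L)\subseteq \pi_v\frak o_{k_v}$ contains no unit, so $L_1$ must be unimodular. The key input here, via reduction modulo $\pi_v$ followed by Hensel's lemma, is the finite-field fact that every non-degenerate quadratic form in $\geq 2$ variables over a residue field of odd characteristic represents every element. This shows that a unimodular lattice of rank $\geq 2$ has $Q\supseteq \frak o_{k_v}^\times$, whereas a rank one unimodular lattice $\langle u\rangle$ only represents $u\frak o_{k_v}^{\times 2}\subsetneq \frak o_{k_v}^\times$; so $\rank(L_1)\geq 2$ is necessary.

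For representing $\pi_v\cdot$units I would treat three subcases. If $\rank(L_1)\geq 3$, the reduction of $L_1$ mod $\pi_v$ has rank $\geq 3$ over the residue field, hence is isotropic by Chevalley--Warning; Hensel then splits off a hyperbolic plane from $L_1$, and since $Q(A(0,0))=\frak o_{k_v}$, the component $L_1$ is already universal. The case $L_1=A(0,0)$ is immediate. When $\rank(L_1)=2$ with $L_1\not\cong A(0,0)$, the form is anisotropic of the shape $\langle 1,u\rangle$ with $-u$ a non-square unit, and $x^2+uy^2\equiv 0\pmod{\pi_v}$ forces $x,y\in\pi_v\frak o_{k_v}$; hence $Q(L_1)\cap \pi_v\frak o_{k_v}^\times=\emptyset$ and the $\pi_v\cdot$units must come from some $L_j$ with $j\geq 2$. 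Since $Q(L_j)\subseteq \pi_v^{e_j}\frak o_{k_v}$ and $e_j\geq e_2$, only $e_2=1$ can contribute anything in $\pi_v\frak o_{k_v}^\times$; rescaling by $\pi_v^{-1}$ reduces the question to when $U_2$ represents every unit, which by the previous paragraph forces $\rank(L_2)\geq 2$.

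The main obstacle is isolating exactly why the binary anisotropic case needs the supplementary hypothesis: one has to argue both that $L_1$ itself can contribute nothing of valuation one, and that among the deeper Jordan components only a $\pi_v$-modular piece of rank $\geq 2$ can supply the missing $\pi_v\cdot$units. Once this rescaling-and-reduction dichotomy is in place, combining the two directions of each step yields the proposition.
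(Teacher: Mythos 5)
Your proof is correct and follows essentially the same route as the paper: both reduce to Lemma \ref{cri} and then check the two layers $\frak o_{k_v}^\times$ and $\pi_v\frak o_{k_v}^\times$ against the Jordan components, the paper simply citing O'Meara 92:1b for the representation behavior of unimodular lattices where you re-derive it via reduction mod $\pi_v$, Chevalley--Warning, and Hensel's lemma. The parity-of-valuation argument you give for the anisotropic binary case (forcing a $(\pi_v)$-modular $L_2$ of rank $\geq 2$) is exactly the content behind the paper's one-line necessity claims.
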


\begin{proof} Sufficiency. For (1), it is clear that $A(0,0)$ is universal. If $\rank(L_1)\geq 3$, then $Q(L_1)=\frak o_{k_v}$ by \cite[92:1b]{OM}. For (2), one has $$Q(L_1)\supseteq \frak o_{k_v}^\times \ \ \ \text{and} \ \ \ Q(L_2)\supseteq \pi_v \frak o_{k_v}^\times $$ by \cite[92:1b]{OM}. The result follows from Lemma \ref{cri}.

Necessity. Since $L$ is universal, one has $\frak n(L)=\frak s(L)= \frak o_{k_v}$. This implies that $L_1$ is unimodular.  If $\rank (L_1)=1$, then $\frak o_{k_v}^\times \nsubseteq Q(L)$. If $L_2$ is not $(\pi_v)$-modular, then $\pi_v\not\in Q(L)$. If $\rank (L_1)=2$ with $L_1\ncong A(0,0)$ and $\rank (L_2)=1$, then $\pi_v\frak o_{k_v}^\times \nsubseteq Q(L)$. 
\end{proof}

\begin{rem} Since Condition (2) in Proposition \ref{nondyadic} is not stable under base change, the universal property for both definite positive and indefinite cases is not stable under base change. 
\end{rem}

\subsection{Dyadic cases of general rank}

It is more complicated to determine the universal lattices over dyadic local fields. One can compare this with computing integral spinor norms over dyadic local fields. Based on various efforts of \cite{Hs}, \cite{Xu1}, \cite{Xu2} and \cite{Xu3},  Beli completed the computation of integral spinor norms over dyadic local fields in \cite{Be} in terms of BONGs. The complete determination of integral spinor norms over dyadic local fields in terms of Jordan splittings was given in \cite{LX}. Because of this complexity, we will only provide the complete solution of universal ternary quadratic forms which is sufficient for our global application. 

Let's recall a minimal norm Jordan splitting of lattices over a dyadic local field. 

\begin{defi} Let $L$ be a lattice over a dyadic local field $k_v$. A Jordan splitting 
\begin{equation} \label{jordan}  L= L_1\perp L_2\perp \cdots \perp L_t  \end{equation} is called a minimal norm Jordan splitting if for any Jordan splitting 
$$ L=L_1'\perp L_2'\perp \cdots \perp L_t' , $$ one has 
$\frak n(L_i)\subseteq \frak n(L_i')$ and $\perp_j \pi_v^{s_i}A(0,0)$ splits $L_i$ as long as $\perp_j \pi_v^{s_i}A(0,0)$ splits $L_i'$ for $1\leq i\leq t$.
\end{defi} 

By \cite[Theorem 1.1]{X-min}, such a minimal norm Jordan splitting always exists.  For a minimal norm Jordan splitting (\ref{jordan}), one can write
\begin{equation} \label{index}  i(L) = \max \{1\leq i \leq t: \ \frak n(L_i)= \frak n(L) \} . \end{equation}

\begin{prop} \label{hyper} If there is $j< i(L)$ such that $\frak n(L_j)= \frak n(L)=\frak o_{k_v}$ in the above minimal norm Jordan splitting (\ref{jordan}) over dyadic local fields, then $L_j$ is an orthogonal sum of $2^{-1}A(0,0)$.
\end{prop}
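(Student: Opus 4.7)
The plan is to first identify the structure of $L_j$ as a $2^{-1}$-modular lattice of type A with unit norm, and then to invoke the hyperbolic-absorption clause of the minimal norm Jordan splitting condition.

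For the first step, let $e = v(2)$ and write $\frak s(L_m) = (\pi_v^{s_m})$ with $s_1 < s_2 < \cdots < s_t$. For any modular $\frak o_{k_v}$-lattice $M$ over a dyadic local field, one has $2\frak s(M) \subseteq \frak n(M) \subseteq \frak s(M)$, and $\frak n(M)$ equals either $\frak s(M)$ (type O) or $2\frak s(M)$ (type A), as in \cite{OM}. Applying this to $L_j$ and $L_{i(L)}$, whose norms are both $\frak o_{k_v}$, forces $s_j, s_{i(L)} \in \{0,-e\}$; combined with $s_j < s_{i(L)} \le 0$ we deduce $s_j = -e$ and $s_{i(L)} = 0$. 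Hence $L_j$ is $2^{-1}$-modular of type A, and by the classification of modular dyadic lattices it decomposes as an orthogonal sum of binary blocks of the form $2^{-1}A(2\alpha,2\beta)$, each of which is either the hyperbolic plane $2^{-1}A(0,0)$ or a non-hyperbolic type A binary (distinguished by an Arf-style invariant).

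The claim is that every such binary block is hyperbolic. By the second defining clause of a minimal norm Jordan splitting, it suffices to exhibit \emph{some} Jordan splitting of $L$ in which the $j$-th component is an orthogonal sum of copies of $2^{-1}A(0,0)$; the minimality clause then transports this property back to our fixed $L_j$. I would build such a splitting by iteratively absorbing each non-hyperbolic binary summand of $L_j$ into $L_{i(L)}$, relying on the following absorption lemma: if $B$ is a non-hyperbolic $2^{-1}$-modular type A binary lattice and $U$ is a unimodular type O lattice (so $\frak n(U) = \frak o_{k_v}$), then there is an orthogonal isometry $B \perp U \cong 2^{-1}A(0,0) \perp U'$ with $U'$ unimodular of the same rank and norm as $U$. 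The witness is constructed by combining a unit-norm vector of $U$ with suitable vectors of $B$ to span a $2^{-1}$-scale hyperbolic plane, and then taking its orthogonal complement inside $B \perp U$.

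The main obstacle is the verification of this absorption lemma: one must exhibit the vectors explicitly, check that they span a plane of scale $2^{-1}\frak o_{k_v}$ with trivial Arf invariant, and verify that the orthogonal complement is indeed unimodular with norm $\frak o_{k_v}$. This is a standard dyadic vector-swap computation in the spirit of those underlying \cite{X-min}. Once the absorption lemma is in hand, iterating it converts each non-hyperbolic binary block of $L_j$ into a hyperbolic one (at the cost of successively modifying $L_{i(L)}$, which retains its scale and norm), producing the desired alternative Jordan splitting and finishing the proof.
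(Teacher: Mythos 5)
Your overall strategy --- pin down $L_j$ as a $2^{-1}$-modular lattice of norm $\frak o_{k_v}$, then use the hyperbolic-splitting clause of the minimal norm Jordan splitting to transport hyperbolicity from an alternative splitting --- is reasonable in outline, but the first step rests on a false general fact. For a modular lattice $M$ over a dyadic local field one only has $2\frak s(M)\subseteq \frak n(M)\subseteq \frak s(M)$; it is \emph{not} true that $\frak n(M)$ must equal $\frak s(M)$ or $2\frak s(M)$. That dichotomy holds when $\ord_v(2)=1$ (the unramified case, which is why it is familiar from the type I/type II classification over $\Bbb Z_2$), but fails for ramified dyadic fields: $A(\pi_v,\pi_v)$ is unimodular with $\frak n=(\pi_v)$, strictly between $\frak s=\frak o_{k_v}$ and $2\frak s=(\pi_v^{e})$ once $e=\ord_v(2)\geq 2$. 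Consequently your deduction that $\frak s(L_j)=(2^{-1})$ and $\frak s(L_{i(L)})=\frak o_{k_v}$ is unjustified; a priori $\ord_v(\frak s(L_j))$ could be any integer in $[-e,-1]$ and $\ord_v(\frak s(L_{i(L)}))$ any integer in $(\ord_v(\frak s(L_j)),0]$. Ruling out the intermediate scales is precisely where the \emph{norm-minimality} clause of the definition must be invoked: if $\frak n(L_j)=\frak o_{k_v}\neq 2\frak s(L_j)$, then $L_j$ splits off a component of rank $\leq 2$ carrying a unit value of $Q$, and adding to its norm generator a suitable multiple of a unit-norm vector of $L_{i(L)}$ produces another Jordan splitting whose $j$-th component has norm contained in $(\pi_v)$, contradicting $\frak n(L_j)\subseteq\frak n(L_j')$. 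This is the content of the cited result \cite[Lemma 1.1]{Xu3} (the paper's entire proof is that citation), and it is the same style of vector-modification argument as Proposition \ref{ref-min}; your write-up never uses the norm-minimality clause at all, only the hyperbolic-splitting clause.

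There are two further, smaller problems downstream. First, your absorption lemma assumes the absorbing lattice $U$ is unimodular of norm $\frak o_{k_v}$, but $L_{i(L)}$ need not be unimodular (again only its norm, not its scale, is pinned to $\frak o_{k_v}$ a priori), so even after repairing step 1 the lemma as stated does not apply verbatim. Second, the absorption lemma itself is left unverified; it is in fact provable for $U$ unimodular with $\frak n(U)=\frak o_{k_v}$ --- one finds a primitive isotropic vector $w=ax+by+z$ in $2^{-1}A(2,2\rho)\perp\frak o_{k_v}z$ because the residue form of $2^{-1}A(2,2\rho)$ is anisotropic and its primitive values fill $\frak o_{k_v}^\times$ (the norm group of the unramified quadratic extension contains all units), and $w$ then generates a splitting $2^{-1}A(0,0)$ --- but this computation is exactly the "main obstacle" you defer, and it is not a routine omission since the isotropy of the ambient space is what makes it work. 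In short: the skeleton of step 2 is the right use of the second minimality clause, but step 1 is wrong as argued and the proof cannot be completed without bringing in the first minimality clause.
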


\begin{proof} It follows from \cite[Lemma 1.1]{Xu3}. 
\end{proof}

For any $\xi\in k_v$,  the quadratic defect is defined in \cite[\S 63 A]{OM}
$$ \frak d (\xi) = \bigcap_{\alpha} \alpha \frak o_{k_v} $$ where $\alpha$ runs over all possible expressions $\xi=\eta^2+\alpha$ for some $\eta\in k_v$. Let $\Delta=1+4\rho$ be a unit satisfying $\frak d(\Delta)=4\frak o_{k_v}$ by \cite[63:2]{OM}. 

\begin{prop}\label{ref-min} Suppose that $L= L_1\perp L_2\perp \cdots \perp L_t $ is a minimal norm Jordan splitting such that $\rank (L_1)=2$ and $\frak n(L_1)\neq 2\frak s(L_1)$. 
If  $$(\pi_v)\frak n(L_1)= \frak n(L_2\perp \cdots \perp L_t) \ \ \ \text{ or } \ \ \ \frak n(L_1)= (\pi_v) \frak n(L_2\perp \cdots \perp L_t), $$ then there is a minimal norm Jordan splitting $L= L_1'\perp L_2'\perp \cdots \perp L_t' $ such that $$\frak d(-\pi^{-2s_1}\det(L_1'))\subseteq 4\frak o_{k_v}  \ \ \ \text{with} \ \ \ s_1=\ord_v(\frak s(L_1)) . $$
\end{prop}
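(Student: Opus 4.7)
The plan is to produce the refined splitting by a single basis modification: absorb a well-chosen vector from $L_2\perp\cdots\perp L_t$ into the second generator of $L_1$, so that $\det(L_1')$ gets shifted into the desired quadratic-defect coset while the minimal-norm Jordan structure is preserved. First I would normalize: the hypothesis $\frak n(L_1)\neq 2\frak s(L_1)$ forces $\frak n(L_1)=\frak s(L_1)=(\pi_v^{s_1})$, so there is an $\frak o_{k_v}$-basis $\{x,y\}$ of $L_1$ with $Q(x)=\pi_v^{s_1}\alpha$, $\alpha\in\frak o_{k_v}^\times$, $B(x,y)=\pi_v^{s_1}$, and $Q(y)=\pi_v^{s_1}\beta$. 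Then $-\pi_v^{-2s_1}\det(L_1)=1-\alpha\beta$, and this is a unit by the $(\pi_v^{s_1})$-modularity of $L_1$.

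Next, observe that the disjunct $\frak n(L_1)=(\pi_v)\frak n(L_2\perp\cdots\perp L_t)$ is vacuous under the hypothesis: for every $i\ge 2$ the Jordan ordering gives $\frak s(L_i)\subsetneq\frak s(L_1)$ and $\frak n(L_i)\subseteq\frak s(L_i)$, so $\frak n(L_2\perp\cdots\perp L_t)\subsetneq\frak n(L_1)$, which rules out the reverse containment. Only the case $(\pi_v)\frak n(L_1)=\frak n(L_2\perp\cdots\perp L_t)$ is substantive, and in it the tail has proper norm $(\pi_v^{s_1+1})$. Pick $z\in L_2\perp\cdots\perp L_t$ with $Q(z)=\pi_v^{s_1+1}\mu$, $\mu\in\frak o_{k_v}^\times$, and for $c\in\frak o_{k_v}$ set $y':=y+cz$, $L_1':=\frak o_{k_v}x+\frak o_{k_v}y'$. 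Orthogonality $B(x,z)=0$ yields $B(x,y')=\pi_v^{s_1}$ and $Q(y')=\pi_v^{s_1}(\beta+c^2\pi_v\mu)$, so $L_1'$ is $(\pi_v^{s_1})$-modular with proper norm and $-\pi_v^{-2s_1}\det(L_1')=1-\alpha\beta-\alpha c^2\pi_v\mu$. Because $1-\alpha\beta$ is a unit, every element of the old tail has integral orthogonal projection onto $kL_1'$, so $L=L_1'\perp M'$ for a complementary $\frak o_{k_v}$-lattice $M'$ whose Jordan scales and norms match those of $L_2\perp\cdots\perp L_t$; the Jordan decomposition of $M'$ then produces a minimal-norm splitting $L=L_1'\perp L_2'\perp\cdots\perp L_t'$.

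Finally, I would select $c$, and if necessary vary $z$, to place $1-\alpha\beta-\alpha c^2\pi_v\mu$ into $(\frak o_{k_v}^\times)^2(1+4\frak o_{k_v})$, which by the standard description of the quadratic defect (see \cite[\S 63A]{OM}) is equivalent to the desired inclusion $\frak d(-\pi_v^{-2s_1}\det(L_1'))\subseteq 4\frak o_{k_v}$. This last step is the main obstacle: the admissible shift $\alpha c^2\pi_v\mu$ lives in $\pi_v\cdot\{c^2\mu\}$, and one must show it realizes enough square-class representatives modulo $4\frak o_{k_v}$ to reach the principal-class neighborhood of $1-\alpha\beta$. The key inputs are the richness of unit square classes supplied by the admissible vectors $z$ in the tail (which follows from the minimal-norm hypothesis together with the norm equality $(\pi_v)\frak n(L_1)=\frak n(L_2\perp\cdots\perp L_t)$), the squared-unit factor $c^2$, and, if a single modification is insufficient, iteration of the same construction with $L_1$ replaced by the improved $L_1'$.
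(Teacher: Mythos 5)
Your opening normalization is incorrect and it propagates. Over a ramified dyadic field ($e=\ord_v(2)\geq 2$) a $(\pi_v^{s_1})$-modular binary lattice can have $\frak n(L_1)=(\pi_v^{a})\frak s(L_1)$ for any $0\leq a\leq e$, and the hypothesis $\frak n(L_1)\neq 2\frak s(L_1)$ only excludes $a=e$; it does \emph{not} force $\frak n(L_1)=\frak s(L_1)$. So your basis with $Q(x)=\pi_v^{s_1}\alpha$, $\alpha$ a unit, need not exist, and --- more seriously --- your claim that the second alternative $\frak n(L_1)=(\pi_v)\frak n(L_2\perp\cdots\perp L_t)$ is vacuous collapses with it: that case occurs precisely when the tail carries the larger norm (it is the $i(L)=2$ case of Proposition 2.14, where this proposition is invoked), e.g.\ $e=3$, $\frak n(L_1)=(\pi_v^{2})\frak s(L_1)$ and $\frak n(L_2)=\frak s(L_2)=(\pi_v)\frak s(L_1)$. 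You must treat it. The paper instead keeps the general normal form $L_1\cong A(\alpha\pi_v^{a},\beta\pi_v^{b})$ of \cite[93:17]{OM} with $0\leq a<b$ and $\frak d(-\det(L_1))=(\pi_v^{a+b})$, $a+b$ odd, and both norm conditions enter only through the parity/size constraint $c\leq b$, $c\equiv b\bmod 2$ on the order $c$ of the norm of the tail component being used.

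The second, and decisive, gap is the one you flag yourself: you never show that the shifts $\alpha c^{2}\pi_v\mu$ actually reach the square-class neighborhood of $1-\alpha\beta$, i.e.\ the step that makes the defect land in $4\frak o_{k_v}$ is missing. The paper's mechanism is an explicit leading-term cancellation, not a counting of available square classes: with $Q(u)=\gamma\pi_v^{c}$ and $c\equiv b\bmod 2$, one chooses a unit $\xi$ with $\beta+\gamma\xi^{2}\equiv 0\bmod\pi_v$ (possible because every residue is a square in a perfect residue field of characteristic $2$) and replaces $y$ by $y+\xi\pi_v^{(b-c)/2}u$; then $Q(y')=\pi_v^{b}(\beta+\gamma\xi^{2})$ has strictly larger order, so $\frak d(-\det(L_1'))\subsetneq\frak d(-\det(L_1))$, and iterating terminates at $\subseteq 4\frak o_{k_v}$. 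Your perturbation $y+cz$ only adjusts $Q(y)$ at orders strictly above $\ord_vQ(y)$ by elements of the special form $c^{2}\mu\pi_v^{s_1+1}$, and without the parity bookkeeping and the residue-field-is-perfect cancellation there is no argument that the defect ever decreases. Finally, your assertion that $L=L_1'\perp M'$ with the same Jordan/norm data is stated but not verified; the paper constructs the orthogonal complement $K'$ explicitly and checks $\frak n(L_1')=\frak n(L_1)$ and $\frak n(K')=\frak n(K)$, which is what guarantees the new splitting is still a minimal norm Jordan splitting.
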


\begin{proof} There is $2\leq i\leq t$ such that $\frak n(L_i)=  \frak n(L_2\perp \cdots \perp L_t)$. Since $\frak n(L_1)\neq 2\frak s(L_1)$, one concludes that $\frak n(L_i)\neq 2\frak s(L_i)$. There is a sub-lattice $K$ with $\rank (K)\leq 2$ such that $$L_i=K\perp K^{\perp} \ \ \ \text{ and  } \ \ \ \frak n(L_i)=\frak n(K) \supsetneq \frak n(K^\perp)$$ by \cite[93:18]{OM}. Without loss of generality, we assume that $L_1$ is unimodular and only need to consider the splitting of $L_1\perp K$. Suppose 
\begin{equation} \label{defect} \frak d(-\det(L_1))\supsetneq 4\frak o_{k_v} . \end{equation}
 One can write $ L_1=\frak o_{k_v} x + \frak o_{k_v} y\cong A(\alpha \pi_v^a,  \beta \pi_v^b) $ with  $0\leq a< b$ and $\alpha, \beta \in \frak o_{k_v}^\times$ such that $$\frak d(-\det(L_1))=(\pi_v^{a+b})$$ by \cite[93:17]{OM}. 
 Let $K=\frak o_k u$ or  $K=\frak o_k u+ \frak o v$ such that $Q(u)=\gamma \pi_v^c$ with $\gamma\in \frak o_{k_v}^\times$ and $$\ord_v(\frak n(K))=c \ \ \ \text{and} \ \ \ \ord_v(B(u,v))\leq c < \ord_v(Q(v)) . $$
Since $a+b$ is odd by (\ref{defect}) and \cite[63:2]{OM}, one has $c\leq b$ and $c\equiv b \mod 2$. There are $\xi, \eta \in \frak o_{k_v}^\times$ such that $\beta + \gamma\xi^2= \eta \pi_v^d$ with $d\geq 1$. Let $$L_1'=\frak o_k x+ \frak o_k (y+\xi \pi_v^{\frac{b-c}{2}} u) \subset L_1\perp K .$$ 
Then one gets a new splitting $ L_1\perp K=L_1'\perp K'$ with
$$ K'= \begin{cases} \frak o_{k_v} (u-\sigma x+ \sigma Q(x) y)  \ \ \ & \text{$\rank (K)=1$}  \\
 \frak o_{k_v} (u-\sigma x+ \sigma Q(x) y) +\frak o_{k_v} (v-\tau x +\tau Q(x) y)  \ \ \ & \text{$\rank (K)=2$} \end{cases} $$
where  $$\sigma=\det(L_1)^{-1}\xi \pi_v^{\frac{b-c}{2}} Q(u) \ \ \ \text{ and } \ \ \ \tau= \det(L_1)^{-1}\xi \pi_v^{\frac{b-c}{2}} B(u,v) . $$ 
By a simple computation, one has 
$$ \frak d(-\det(L_1')) \subsetneq \frak d (-\det(L_1))$$ with $\frak n(L_1')=\frak n(L_1)$ and $\frak n(K')=\frak n(K)$. By repeating this argument, one eventually gets 
 $$\frak d(-\det(L_1'))\subseteq 4\frak o_{k_v}$$ as required. 
\end{proof}

\begin{prop} \label{norm} Let $L$ be an $\frak o_{k_v}$-lattice with the minimal norm Jordan splitting (\ref{jordan}) over a dyadic local field $k_v$.  Suppose $\rank (L_{i(L)})=1$ and $\frak n(L)= \frak o_k$. If $L$ is universal, then there is $1\leq j\leq t$ with $j\neq i(L)$ such that $(\pi_v)  \subseteq \frak n (L_j) \subseteq \frak o_{k_v} $. 
\end{prop}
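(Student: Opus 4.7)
The plan is a proof by contradiction exploiting a parity-of-valuation obstruction forced by the rank one hypothesis on $L_{i(L)}$.

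First I would pin down the structure of $L_{i(L)}$. Because $\rank(L_{i(L)}) = 1$ and $\frak n(L_{i(L)}) = \frak n(L) = \frak o_{k_v}$, one necessarily has $L_{i(L)} = \frak o_{k_v} x$ with $Q(x) = \epsilon \in \frak o_{k_v}^\times$, so that $L_{i(L)}$ is unimodular of rank one. Every element of $Q(L_{i(L)})$ is then of the form $\epsilon\alpha^2$ for some $\alpha \in \frak o_{k_v}$, and in particular is either $0$ or has even $\pi_v$-adic valuation.

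Next I would assume for contradiction that $\frak n(L_j) \notin \{\frak o_{k_v}, (\pi_v)\}$ for every $j \neq i(L)$. Since each $\frak n(L_j)$ is an integral ideal of $\frak o_{k_v}$ (as it is contained in $\frak n(L) = \frak o_{k_v}$, and $\frak o_{k_v}$ is a discrete valuation ring), this hypothesis forces $\frak n(L_j) \subseteq (\pi_v^2)$ for all $j \neq i(L)$. Universality of $L$ supplies some $y \in L$ with $Q(y) = \pi_v$; decomposing $y = y_1 + \cdots + y_t$ along the orthogonal splitting (\ref{jordan}) yields
$$ \pi_v = \sum_{i=1}^{t} Q(y_i), \qquad Q(y_j) \in \frak n(L_j) \subseteq (\pi_v^2) \text{ for } j \neq i(L). $$
Reducing modulo $(\pi_v^2)$ leaves $Q(y_{i(L)}) \equiv \pi_v \pmod{\pi_v^2}$, forcing $\ord_v(Q(y_{i(L)})) = 1$. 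This contradicts the fact that $Q(y_{i(L)}) = \epsilon\alpha^2$ has even valuation.

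I do not foresee a real obstacle here. The whole argument rests on the single parity observation that a rank one unimodular component contributes only even-valuation values to $Q(L)$, so any odd-valuation value represented by $L$ — in particular $\pi_v$ — must draw on some Jordan component $L_j$ with $j \neq i(L)$ whose norm ideal contains $\pi_v$, i.e.\ with $\frak n(L_j) \in \{\frak o_{k_v}, (\pi_v)\}$. The rank one assumption is precisely what makes the obstruction go through; without it one could absorb $\pi_v$ using a hyperbolic plane inside $L_{i(L)}$ itself.
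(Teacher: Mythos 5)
Your proof is correct, and it follows the same overall strategy as the paper's: negate the conclusion, observe that this forces $\frak n(L_j)\subseteq(\pi_v^2)$ for all $j\neq i(L)$, and then derive a contradiction from a single value that universality forces $L$ to represent. The difference is in the final obstruction. The paper represents the unit $1+\pi_v$, writes it as $a^2+Q(y)$ with $a\in\frak o_{k_v}^\times$ and $Q(y)\in(\pi_v^2)$, and then invokes the quadratic defect computation \cite[63:5]{OM}: the square $a^2=1+(\pi_v-Q(y))$ would have defect $(\pi_v)$ rather than $0$. You instead represent $\pi_v$ itself and observe that the rank-one unimodular component can only contribute values $\epsilon\alpha^2$ of even valuation, while the congruence $Q(y_{i(L)})\equiv\pi_v\pmod{\pi_v^2}$ forces valuation exactly $1$. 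Your version is marginally more self-contained, since it replaces the appeal to the defect lemma by the bare fact that squares have even valuation; the paper's version has the small advantage of fitting the pattern used repeatedly elsewhere in Section 2 (testing representability of elements of prescribed quadratic defect), but for this particular proposition nothing is lost by your route. One tiny point worth making explicit if you write this up: $Q(x)$ is a unit precisely because $\frak n(L_{i(L)})=\frak n(L)=\frak o_{k_v}$ by the definition of $i(L)$ in (\ref{index}), which is what licenses the "even valuation" claim.
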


\begin{proof}  Without loss of generality, we assume that $L_{i(L)}=\frak o_{k_v} x$ with $Q(x)=1$. Suppose $$ \frak n(L_k)\subseteq (\pi_v^2) \ \ \ \text{for all $k\neq i(L)$} . $$
Since $L$ is universal, there are $a\in \frak o_{k_v}^\times$ and $$y\in L_1\perp\cdots \perp L_{i(L)-1} \perp L_{i(L)+1} \perp \cdots \perp L_t$$ such that 
\begin{equation} \label{def} 1+\pi_v=Q(ax+y)= a^2+Q(y) .  \end{equation}
Since $Q(y)\in (\pi_v^2)$, a contradiction is derived from (\ref{def}) by \cite[63:5]{OM}. 
\end{proof}

\begin{cor} \label{binary} A binary $\frak o_{k_v}$-lattice $L$  over dyadic local fields is universal if and only if $L=2^{-1}A(0,0)$.
\end{cor}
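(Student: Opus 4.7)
The plan is as follows.

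\emph{Sufficiency.} For $L = 2^{-1}A(0,0)$ with basis $\{x,y\}$ satisfying $Q(x)=Q(y)=0$ and $B(x,y) = 1/2$, one has $Q(ax+by) = ab$, so $Q(L) = \frak o_{k_v}$.

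\emph{Necessity.} Suppose $L$ is a universal binary $\frak o_{k_v}$-lattice. Universality gives $\frak n(L) = \frak o_{k_v}$, and the inequalities $2\frak s(L) \subseteq \frak n(L) \subseteq \frak s(L)$ bound $\frak o_{k_v} \subseteq \frak s(L) \subseteq 2^{-1}\frak o_{k_v}$. Fix a minimal norm Jordan splitting $L = L_1 \perp \cdots \perp L_t$ with $t \leq 2$. If $t = 2$ (both components of rank $1$), then $i(L) = 1$ because rank-$1$ Jordan pieces have $\frak n = \frak s$ with strictly decreasing scale; applying \autoref{norm} to $L_2$ forces $\frak n(L_2) = (\pi_v)$, so $L \cong \langle u\rangle \perp \langle v\pi_v\rangle$ with $u,v \in \frak o_{k_v}^\times$. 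A residue analysis excludes universality here: in the dyadic setting the squaring map $\frak o_{k_v} \to \frak o_{k_v}/\pi_v^{2e}$ (with $e = \ord_v 2$) has image of cardinality only $q^e$, where $q = |\frak o_{k_v}/\pi_v|$, so $u\frak o_{k_v}^2 + v\pi_v \frak o_{k_v}^2$ misses some residue of $\frak o_{k_v}/\pi_v^{2e+1}$, contradicting $Q(L) = \frak o_{k_v}$.

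If $t = 1$, then $L$ is modular of rank $2$ with $\frak n(L) \in \{\frak s(L), 2\frak s(L)\}$, so $\frak n(L) = \frak o_{k_v}$ combined with the scale bound leaves exactly two possibilities: $\frak s(L) = \frak o_{k_v}$ (unimodular with $\frak n = \frak s$) or $\frak s(L) = 2^{-1}\frak o_{k_v}$ (with $\frak n = 2\frak s$). In the first subcase, $L \cong A(\alpha, \beta)$ with $\alpha \in \frak o_{k_v}^\times$ splits orthogonally as $\langle \alpha\rangle \perp \langle \beta - \alpha^{-1}\rangle$ by \cite[93:18]{OM}, and the same dyadic residue obstruction rules out universality. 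In the second subcase, an anisotropic rational span would force $Q(kL) \subsetneq k_v$, contradicting $Q(L) = \frak o_{k_v}$; hence $kL$ is the hyperbolic plane, and by the classification of binary modular lattices with $\frak n = 2\frak s$ and isotropic rational span (essentially the rank-$2$ instance of \autoref{hyper}, obtained by using the isotropy to solve $Q(y)c^2 + c + Q(x) = 0$ for $c\in\frak o_{k_v}$ and producing an isotropic basis with pairing $1/2$), $L \cong 2^{-1}A(0,0)$.

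The main obstacle is the dyadic residue analysis ruling out $\langle u\rangle \perp \langle v\pi_v\rangle$ and $\langle \alpha\rangle \perp \langle c\rangle$ as universal; this reflects the subtle structure of squares modulo powers of $\pi_v$ in dyadic local fields, but once this is in hand the remainder is bookkeeping with the preceding propositions.
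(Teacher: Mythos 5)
Your sufficiency argument and your treatment of the two-Jordan-component case are fine: after Prop.~\ref{norm} reduces to $\langle u\rangle\perp\langle v\pi_v\rangle$, the odd valuation of the second diagonal entry makes a counting/quadratic-defect argument work (the paper instead quotes \cite[63:11]{OM} to see that $\Delta$ is missed), and your handling of the $\frak s(L)=2^{-1}\frak o_{k_v}$, $\frak n(L)=2\frak s(L)$ subcase via isotropy and \cite[93:11]{OM} is correct. The gap is in the modular case. Your claim that a binary modular lattice has $\frak n(L)\in\{\frak s(L),2\frak s(L)\}$ is false over ramified dyadic fields: for example $\pi_v^{-1}A(\pi_v,\pi_v)$ with $\ord_v(2)\geq 2$ has $\frak n(L)=\frak o_{k_v}$ and $\frak s(L)=\pi_v^{-1}\frak o_{k_v}$, so $2\frak s(L)\subsetneq\frak n(L)\subsetneq\frak s(L)$. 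Consequently all the lattices with $\frak s(L)=\pi_v^{-j}\frak o_{k_v}$ for $0<j<\ord_v(2)$ --- which are non-diagonalizable and exist whenever $k_v/\BQ_2$ is ramified --- are simply never examined in your case division. These are precisely the hard cases; the paper disposes of them (together with the unimodular ones) by writing $L\cong\pi_v^{-a}A(\pi_v^a,\epsilon\pi_v^b)$ with $a<\ord_v(2)$, observing that universality puts every symmetry $\tau_z$ with $\ord_v(Q(z))\leq 1$ into $O(L)$ and hence forces $\theta(O^+(L))=k_v^\times$ by Lemma~\ref{cri}, which contradicts the known computations of binary integral spinor norm groups in \cite{Hs} and \cite{Xu1}. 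Some such non-elementary input seems unavoidable here.

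A second, smaller problem: in your ``first subcase'' ($\frak s(L)=\frak o_{k_v}$, $L\cong\langle\alpha\rangle\perp\langle\gamma\rangle$ with $\alpha$ a unit) the phrase ``the same dyadic residue obstruction'' does not apply. The obstruction you set up for $\langle u\rangle\perp\langle v\pi_v\rangle$ leans on the second entry having odd valuation; here $\gamma$ may be a unit, and a crude count of (unit squares)$\times$(squares) modulo $\pi_v^{2\ord_v(2)+1}$ no longer beats the number of unit residues once the residue field is large. One must instead argue via square classes (e.g., if $k_vL$ is anisotropic then $Q(k_vL)\neq k_v$; if isotropic then $L\cong\alpha\langle 1,-1\rangle$ and one checks directly that elements such as $2\alpha\epsilon$ are missed), or again fall back on the spinor norm computation as the paper does. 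Until the intermediate-scale cases and this subcase are supplied with actual arguments, the necessity direction is incomplete.
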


\begin{proof} Suppose that $L$ has two Jordan components. One can assume that $$ L= \frak o_{k_v} x\perp \frak o_{k_v} y \ \ \ \text{with} \ \ \ Q(x)=1, \ \ Q(y) = \epsilon \pi_v$$ by Prop. \ref{norm}.
Then $\Delta\not\in Q(L)$ by \cite[63:11]{OM}. 

Suppose that $L$ is modular. Then one can assume that $$L\cong \pi_v^{-a}A(\pi_v^a, \epsilon \pi_v^b) \ \ \ \text{with} \ \ \ 0\leq a\leq b \ \ \ \text{and} \ \ \ a \leq \ord_v(2) $$ by \cite[93:17]{OM}. 

If $a< \ord_v(2)$, then $\tau_z\in O(L)$ for any $z\in L$ with $\ord_v(Q(z))\leq 1$. Since $L$ is universal, one concludes that $\theta(O^+(L))=k_v^\times$ by Lemma \ref{cri}. This is impossible by \cite[Prop.B, Prop.C, Prop.D, Prop.E]{Hs} and \cite{Xu1}. 

Otherwise, one has $a=\ord_v(2)$. Then $L\cong 2^{-1}A(0,0)$ or $2^{-1}A(2, 2\rho)$ by \cite[93:11]{OM}. Since $\pi_v\not\in Q(2^{-1}A(2, 2\rho))$ by the principle of domination in \cite{Ri},  one obtains the result as desired. 
\end{proof}

\subsection{Dyadic cases of rank three}
Now we focus on ternary lattices over dyadic local fields. We first need the following lemma.  

\begin{lem}\label{one-dim} Suppose
$$L=\frak o_{k_v} x\perp \frak o_{k_v} y \perp \frak o_{k_v} z \ \ \ \text{with} \ \ \ Q(x)=1, \ \ Q(y)=\epsilon \pi_v, \ \ Q(z)= \delta \pi_v^k  \ \ \text{and}  \ \ 1\leq k \leq 2\ord_v(2)  $$ where $\epsilon, \delta \in \frak o_{k_v}^\times$. Then $\frak o_{k_v}^\times \subseteq Q(L)$ if and only if $k$ is even.
\end{lem}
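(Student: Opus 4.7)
The plan is to analyze the Diophantine equation $u=a^2+\epsilon\pi_v b^2+\delta\pi_v^k c^2$ via Hensel's lemma, reducing the problem to a mod-$\pi_v^{2e+1}$ surjectivity check, where $e=\ord_v(2)$. The exponent $2e+1$ is natural: $\partial_a Q=2a$ has valuation exactly $e$ when $a$ is a unit, so any approximate solution whose error lies in $(\pi_v^{2e+1})$ lifts to an exact solution.

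For the backward direction ($k=2j$ even, $1\le j\le e$), given $u\in\frak o_{k_v}^\times$ I first pick $a_0\in\frak o_{k_v}^\times$ with $a_0^2\equiv u\pmod{\pi_v}$, which is possible since squaring is Frobenius-surjective on the residue field $\mathbb F_q$. I then refine by choosing $b$ and $c$, re-adjusting $a$, to kill the residual error modulo $\pi_v^{2e+1}$. The decisive observation is that for $k$ even, the third term $\delta\pi_v^{2j}c^2=\delta(\pi_v^j c)^2$ supplies even-valuation corrections which, combined with $a^2$ (even-valuation) and $\epsilon\pi_v b^2$ (odd-valuation), cover every residue class in $\frak o_{k_v}/\pi_v^{2e+1}$. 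Hensel's lemma then produces an exact solution, so every unit is represented.

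For the forward direction I argue the contrapositive. If $k=2l+1$ is odd with $0\le l\le e-1$, then $\delta\pi_v^{2l+1}c^2=\delta\pi_v(\pi_v^l c)^2$; setting $s=\pi_v^l c\in\frak o_{k_v}$ yields the inclusion $Q(L)\subseteq Q(L')$, where $L'=\frak o_{k_v} x\perp\frak o_{k_v} y\perp\frak o_{k_v} z'$ with $Q(z')=\delta\pi_v$. It therefore suffices to show $L'$ misses some unit. For $u\in\frak o_{k_v}^\times$ to be represented by $L'$ forces $a\in\frak o_{k_v}^\times$ and $(u-a^2)/\pi_v=\epsilon b^2+\delta s^2$; parameterising $a=a_0(1+\pi_v r)$, a direct computation shows that the residues $(u-a^2)/\pi_v\pmod{\pi_v^{2e}}$ trace out a single coset of a proper additive subgroup of $\frak o_{k_v}/\pi_v^{2e}$ (the image of the additive polynomial $r\mapsto 2a_0^2 r+\pi_v a_0^2 r^2$). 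Since the diagonal binary form $\epsilon b^2+\delta s^2$ is not universal over $\frak o_{k_v}$ --- in the spirit of Corollary~\ref{binary} --- its image modulo $\pi_v^{2e}$ has proper complement, so one can select a unit $u$ whose coset misses this image. The principle of domination \cite[63:5]{OM} then rules out any other representation.

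The main obstacle is the mod-$\pi_v^{2e+1}$ surjectivity check in the backward direction, particularly the boundary case $k=2e$: there $\delta\pi_v^{2e}c^2$ contributes only residues $\equiv 0\pmod{\pi_v^{2e}}$, and covering the last unit square class may require invoking the distinguished unit $\Delta=1+4\rho$ with $\frak d(\Delta)=4\frak o_{k_v}$ from \cite[63:2]{OM} to pin down the missing residue.
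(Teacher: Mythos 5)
Your proposal has genuine gaps in both directions, and in each case the gap is exactly the step that carries the mathematical content of the lemma.

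In the backward direction, the assertion that $a^2$ (even valuation), $\epsilon\pi_v b^2$ (odd valuation) and $\delta\pi_v^{2j}c^2$ (even valuation) together ``cover every residue class'' modulo $\pi_v^{2e+1}$ is not a proof, and the heuristic behind it is unreliable: the binary sublattice $\frak o_{k_v}x\perp\frak o_{k_v}y$ already contributes both even-valuation and odd-valuation terms, yet by \cite[63:11]{OM} it misses precisely the units in $\Delta\,(\frak o_{k_v}^\times\cap Q(\frak o_{k_v}x\perp\frak o_{k_v}y))$. The whole point is that the third variable must supply the correction $4a^2\rho$ (valuation exactly $2\ord_v(2)$) needed to reach that missing $\Delta$-class; this is possible exactly because $2a\pi_v^{-k/2}\xi z$ lies in $L$ when $k$ is even with $k\le 2\ord_v(2)$, and because $\rho\delta^{-1}$ is a square modulo $\pi_v$. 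The paper carries this out explicitly by solving $\gamma^2=a^2+4\rho\epsilon\pi_v b^2+4a^2\delta\eta$ via \cite[63:1]{OM} and exhibiting the vector $\gamma x+by+2a\pi_v^{-k/2}\xi z$. You yourself flag this surjectivity check as ``the main obstacle,'' which is an accurate self-assessment: it is the theorem, not a routine verification, and it is not confined to the boundary case $k=2e$.

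In the forward direction, the reduction to $L'=\langle 1,\epsilon\pi_v,\delta\pi_v\rangle$ via $s=\pi_v^l c$ is fine, but the rest is unsubstantiated. The map $r\mapsto 2a_0^2r+\pi_va_0^2r^2$ is not additive, so its image is not a coset of a subgroup; and mere non-universality of $\langle\epsilon,\delta\rangle$ (Corollary \ref{binary} does not directly apply, since that corollary classifies universal binary lattices rather than describing the complement of $Q(\langle\epsilon,\delta\rangle)$ modulo $\pi_v^{2e}$) does not let you choose a single unit $u$ for which the entire family of targets $(u-a^2)/\pi_v$, as $a$ ranges over its residue class, avoids $Q(\langle\epsilon,\delta\rangle)$: you must control the interaction between the two sets, not just observe that each is proper. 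The paper's route is different and sharper: it first rescales by $-\epsilon\delta$ so that the $\pi_v$-modular part contains an isotropic vector $w$ with $Q'(w)=0$, then shows that any representation of $\Delta$ would force $\ord_v(b)\ge \ord_v(2)-\tfrac{1}{2}(k-1)$ by \cite[63:5]{OM}, whence $\Delta$ would be a square by \cite[63:1]{OM} --- a contradiction. Some such normalization and quantitative defect estimate is needed; as written, your argument could not be completed without essentially reconstructing it.
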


\begin{proof} For an even integer $k$,  one considers  $$ \frak o_{k_v}^\times = (\frak o_{k_v}^\times \cap Q(\frak o_{k_v} x\perp \frak o_{k_v} y ) )\cup \Delta (\frak o_{k_v}^\times \cap Q(\frak o_{k_v} x\perp \frak o_{k_v} y ) )  $$ by \cite[63:11]{OM}. For any  $$(1+4\rho) (a^2+\epsilon \pi_v b^2) \in \Delta (\frak o_{k_v}^\times \cap Q(\frak o_{k_v} x\perp \frak o_{k_v} y ) ) $$ with $a\in \frak o_{k_v}^\times$ and $b \in \frak o_{k_v}$, there are $\xi\in \frak o_{k_v}^\times$ and $\eta\in (\pi_v)$ such that $\rho\delta^{-1}=\xi^2+\eta$.  By \cite[63:1]{OM}, there is $\gamma\in \frak o_{k_v}^\times$ such that $$\gamma^2=a^2+4\rho\epsilon \pi_v b^2 + 4a^2\delta \eta .$$
Since $$ Q(\gamma x+by +2a\pi_v^{-\frac{k}{2}}\xi z)=\gamma^2+\epsilon \pi_v b^2+ 4 a^2\xi^2 \delta  = a^2+4\rho\epsilon \pi_v b^2 + 4a^2\delta \eta +\epsilon \pi_v b^2+ 4 a^2\xi^2 \delta $$ 
$$=(a^2 +\epsilon \pi_v b^2) +4\rho \epsilon \pi_v b^2 +4a^2\delta (\xi^2+\eta) =(a^2 +\epsilon \pi_v b^2) +4\rho \epsilon \pi_v b^2 +4a^2\rho= (1+4\rho) (a^2+\epsilon \pi_v b^2) $$ with $\gamma x+by +2a\pi_v^{-\frac{k}{2}}\xi z\in L$, one concludes that $\frak o_{k_v}^\times \subseteq Q(L)$. 

For an odd integer $k$, we consider a new quadratic form $Q'$ scaling by $-\epsilon\delta$. Then  $$\frak o_{k_v}^\times \subseteq Q(L) \ \ \ \text{ if and only if } \ \ \  \frak o_{k_v}^\times \subseteq Q'(L) . $$
Suppose $\frak o_{k_v}^\times \subseteq Q'(L)$. Then 
$$L=\frak o_{k_v} x'\perp \frak o_{k_v} y'\perp \frak o_{k_v} z' \ \ \ \text{with} \ \ \ Q'(x')=1, \ \ Q'(y')= \epsilon' \pi_{v} \ \ \text{and} \ \ Q'(z')=\delta'\pi_v^k $$ where $\epsilon', \delta'\in \frak o_{k_v}^\times$ with $-\epsilon'\delta'\in (\frak o_{k_v}^\times)^2$. Since $\Delta \in Q'(L)$, one obtains that 
$$ \Delta \in Q'(\frak o_{k_x} x' \perp (\pi_v^{\frac{k-1}{2}}) y' \perp \frak o_{k_v} z') $$ by \cite[63:5]{OM}. Since 
$(\pi_v^{\frac{k-1}{2}}) y' \perp \frak o_{k_v} z'=\frak o_{k_v} u + \frak o_{k_v} w$ where $$Q'(u)=\gamma \pi_v^k, \ \ \ Q'(w)=0,  \ \ \ B'(u,w)=\pi_v^{k}  \ \ \ \text{with} \ \ \ \gamma\in \frak o_{k_v}^\times,  $$ there are $a\in \frak o_{k_v}^\times$ and $b, c\in \frak o_{k_v}$ such that $\Delta= Q'(ax'+bu+cw)$. By \cite[63:5]{OM}, one concludes that $\ord_v(b)\geq  e-\frac{1}{2}(k-1)$. This implies that $\Delta\in (\frak o_{k_v}^\times)^2$ by \cite[63:1]{OM}, which is a contradiction.   
\end{proof}

\begin{prop}\label{univ-one} A universal $\frak o_{k_v}$-lattice of rank 3 has a Jordan component of rank $\geq 2$. 
\end{prop}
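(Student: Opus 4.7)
The plan is to argue by contradiction: I assume $L$ is a universal rank-three lattice whose minimal norm Jordan splitting $L = L_1 \perp L_2 \perp L_3$ consists entirely of rank-one components, and I aim to produce an element of $\frak o_{k_v}^\times \cup \pi_v \frak o_{k_v}^\times$ not in $Q(L)$. Since $L$ is universal, Lemma~\ref{cri} gives $\frak n(L) = \frak o_{k_v}$, so $L_1$ is unimodular and $i(L) = 1$. Applying Proposition~\ref{norm} to the rank-one component $L_{i(L)}$ yields some $j \in \{2,3\}$ with $(\pi_v) \subseteq \frak n(L_j) \subseteq \frak o_{k_v}$; since $\frak n(L_j) = \frak s(L_j)$ for a rank-one lattice and the scales of a Jordan splitting increase strictly, this forces $L_2$ to be $(\pi_v)$-modular and $L_3$ to be $(\pi_v^m)$-modular for some $m \geq 2$. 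Because $1 \in Q(L)$, any vector $w \in L$ with $Q(w) = 1$ spans a unimodular orthogonal summand $\frak o_{k_v} w$, and after changing basis one may assume $L \cong \langle 1\rangle \perp \langle \epsilon \pi_v\rangle \perp \langle \delta \pi_v^m\rangle$ for units $\epsilon, \delta$ and $m \geq 2$.

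I then split into cases based on $m$ relative to $2e := 2\ord_v(2)$. If $m \leq 2e$ and $m$ is odd, Lemma~\ref{one-dim} applies directly to give $\frak o_{k_v}^\times \not\subseteq Q(L)$, a contradiction. If $m \leq 2e$ and $m$ is even, I instead analyze which units $u$ satisfy $\pi_v u \in Q(L)$: writing $\pi_v u = a^2 + \epsilon\pi_v b^2 + \delta\pi_v^m c^2$ forces $a = \pi_v a'$, and dividing by $\pi_v$ identifies $\{u : \pi_v u \in Q(L)\}$ with the image of $\langle \epsilon, \pi_v, \delta\pi_v^{m-1}\rangle$. Rescaling the form by $\epsilon^{-1}$ puts this lattice into the shape $\langle 1, \epsilon^{-1}\pi_v, \epsilon^{-1}\delta\pi_v^{m-1}\rangle$, with exponent $m-1$ odd and lying in $[1, 2e-1]$, so Lemma~\ref{one-dim} again denies the inclusion of all units, equivalently $\pi_v \frak o_{k_v}^\times \not\subseteq Q(L)$.

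The remaining case $m \geq 2e+1$ lies outside the range of Lemma~\ref{one-dim}, and here I use a quadratic defect argument to rule out the specific unit $\Delta = 1 + 4\rho$. If $\Delta = a^2 + \epsilon\pi_v b^2 + \delta\pi_v^m c^2$, then $a \in \frak o_{k_v}^\times$ and $w := a^2 + \epsilon\pi_v b^2$ is a unit with $\Delta - w \in \pi_v^{2e+1}\frak o_{k_v}$; the classification of quadratic defects in \cite[63:2]{OM} shows that any element of $1 + \pi_v^{2e+1}\frak o_{k_v}$ is a square, so $\Delta/w$ is a square and $\frak d(w) = \frak d(\Delta) = 4\frak o_{k_v}$. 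On the other hand, writing $w = a^2(1 + \epsilon\pi_v(b/a)^2)$ with $b/a \in \frak o_{k_v}$ and again applying \cite[63:2]{OM} shows that the defect of any unit represented by $\langle 1, \epsilon\pi_v\rangle$ lies in $\{0, (\pi_v), (\pi_v^3), \ldots, (\pi_v^{2e-1})\}$, which excludes $4\frak o_{k_v}$; this contradiction completes the proof. The main obstacle is exactly this large-$m$ case, where Lemma~\ref{one-dim} does not apply and one must instead leverage the finer classification of unit square classes via quadratic defects to separate the square class of $\Delta$ from that of any unit represented by $\langle 1, \epsilon\pi_v\rangle$.
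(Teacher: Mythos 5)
Your proof is correct and follows essentially the same route as the paper's: reduce to the diagonal form $\langle 1\rangle\perp\langle\epsilon\pi_v\rangle\perp\langle\delta\pi_v^m\rangle$ via Proposition~\ref{norm}, kill odd $m\le 2\ord_v(2)$ by Lemma~\ref{one-dim}, kill even $m$ by the same $\pi_v$-rescaling trick, and kill $m>2\ord_v(2)$ by the local square theorem plus a quadratic-defect comparison with $\Delta$ (the paper compresses this last step into a citation of \cite[63:1]{OM}). The only quibble is bibliographic: the large-$m$ case really rests on \cite[63:1]{OM} and \cite[63:5]{OM} rather than \cite[63:2]{OM}.
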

\begin{proof}  Suppose there is a universal $\frak o_{k_v}$-lattice $L$ of rank 3 such that all Jordan components of $L$ are one dimensional.  By Prop.\ref{norm}, one can assume that 
$$L=\frak o_{k_v} x\perp \frak o_{k_v} y \perp \frak o_{k_v} z \ \ \ \text{with} \ \ \ Q(x)=1, \ \ Q(y)=\epsilon \pi_v, \ \ Q(z)= \delta \pi_v^k  \ \ \text{and}  \ \  k \geq 2  $$  where $\epsilon, \delta \in \frak o_{k_v}^\times$. Then $k\leq 2\ord_v(2)$ is even by \cite[63:1]{OM} and Lemma \ref{one-dim}.  Since $$\pi_v \frak o_{k_v}^\times \subset Q(\frak o_{k_v}(\pi_v x)\perp \frak o_{k_v} y \perp \frak o_{k_v} z) , $$ one concludes that 
$$ \frak o_{k_v}^\times \subset Q^{\epsilon^{-1} \pi_v^{-1}}(\frak o_{k_v} y \perp \frak o_{k_v}(\pi_v x) \perp \frak o_{k_v} z ) $$ by scaling $\epsilon^{-1} \pi_v^{-1}$. A contradiction is derived by Lemma \ref{one-dim}. 
\end{proof}

\begin{lem} \label{neg} If $L=(\frak o_{k_v} x+ \frak o_{k_v} y) \perp \frak o_{k_v} z$ such that 
$$ Q(x)=\epsilon \pi_v, \ \ \ B(x, y)=\pi_v^{-a}, \ \ \ Q(y)= -4\rho \epsilon^{-1} \pi_v^{-2a-1} \ \ \ \text{and} \ \ \ Q(z) =1 $$ where $0\leq a< \ord_v(2)$ is an integer and $\epsilon \in \frak o_{k_v}^\times$, then $\Delta \not\in Q(L)$.   
\end{lem}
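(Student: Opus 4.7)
The plan is to assume $Q(v) = \Delta$ for some $v = \alpha x + \beta y + \gamma z \in L$ and derive a contradiction by comparing parities of valuations. First, multiplying $Q(v) = \Delta$ by $\epsilon\pi_v^{2a+1}$ and completing the square on the binary block (using $Q(x)Q(y) - B(x,y)^2 = -\pi_v^{-2a}\Delta$), I obtain the master identity
\begin{equation*}
\mu^2 + \epsilon\pi_v^{2a+1}(\gamma^2-\Delta) = \Delta\beta^2,\qquad \mu := \epsilon\pi_v^{a+1}\alpha + \beta.\tag{$\ast$}
\end{equation*}
If $\beta = 0$ then $\gamma^2 = \Delta - \epsilon\pi_v\alpha^2$; when $v(\alpha)<e$ the right-hand side has odd valuation $1+2v(\alpha)$, contradicting $2v(\gamma)$, while for $v(\alpha)\ge e$ the right-hand side equals $\Delta(1+w)$ with $v(w) \ge 2e+1$, so by the local squares theorem $\Delta$ would be a square, contradicting $\frak d(\Delta) = 4\frak o_{k_v}$.

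The heart of the argument is to show that $v(\Delta-\eta^2)$ is \emph{even} for every $\eta\in \frak o_{k_v}$. When $\bar\eta \ne 1$ one has $v(\Delta-\eta^2)=0$; for $\eta = 1+\zeta$ with $v(\zeta)<e$ a direct computation yields $v(\Delta-\eta^2) = 2v(\zeta)$, and for $v(\zeta)>e$ it is $2e$. The subtle case is $v(\zeta)=e$: writing $\zeta = \pi_v^e\omega$ and $2 = \pi_v^e\eta_0$ with $\omega,\eta_0 \in \frak o_{k_v}^\times$, one computes $\Delta - \eta^2 = \pi_v^{2e}\bigl(\eta_0^2\rho - \eta_0\omega - \omega^2\bigr)$. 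The hypothesis $\frak d(\Delta) = 4\frak o_{k_v}$ is precisely the insolubility of the Artin--Schreier equation $\bar Y^2 + \bar Y = \bar\rho$ in the residue field (apply $\bar Y = \bar\omega/\bar\eta_0$), so the inner factor is a unit and $v(\Delta-\eta^2)=2e$. Verifying this Artin--Schreier reformulation of the defect is the main technical obstacle.

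Granting this claim, $v(\gamma^2-\Delta)$ is even, so the middle term of $(\ast)$ has odd valuation while $v(\mu^2)$ and $v(\Delta\beta^2)$ are even; parity thus forces $v(\mu) = v(\beta) =: b$ together with $2b < 2a+1+v(\gamma^2-\Delta)$. Writing $m = \mu/\pi_v^b$, $\beta_0 = \beta/\pi_v^b$ (both units) and dividing $(\ast)$ by $\pi_v^{2b}$ gives
\begin{equation*}
\Delta\beta_0^2 - m^2 = \epsilon\pi_v^{2a+1-2b}(\gamma^2-\Delta).
\end{equation*}
Applying the claim to $\eta := m/\beta_0\in \frak o_{k_v}^\times$, the left side equals $\beta_0^2(\Delta-\eta^2)$ and has even valuation, while the right side has odd valuation. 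Neither side is zero since $\Delta$ is not a square, so we obtain the desired parity contradiction.
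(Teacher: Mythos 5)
Your proof is correct in substance and arrives at the same contradiction as the paper's: the element $\Delta-\gamma^2$ would have to have odd valuation (because the binary block $\frak o_{k_v}x+\frak o_{k_v}y$ represents only elements of odd order) and even valuation (because $\frak d(\Delta)=4\frak o_{k_v}$). The difference is that you prove from scratch what the paper invokes as black boxes: its entire proof is that $\ord_vQ(\alpha x+\beta y)=\min\{\ord_v Q(\alpha x),\ord_v Q(\beta y)\}$ is odd by Riehm's principle of domination, contradicting O'Meara 63:1 and 63:5. Your identity $(\ast)$, which says $\epsilon\pi_v^{2a+1}Q(\alpha x+\beta y)=\mu^2-\Delta\beta^2=\beta^2(\eta^2-\Delta)$, combined with your claim that $v(\Delta-\eta^2)$ is always even and at most $2\ord_v(2)$, is precisely a hands-on proof of the principle of domination for this binary lattice (whose relevant invariant is $-\det=\pi_v^{-2a}\Delta$), and your Artin--Schreier analysis of the borderline case $v(\zeta)=\ord_v(2)$ is the standard characterization of $\frak d(1+4\rho)=4\frak o_{k_v}$ that underlies O'Meara's 63:5; so what you flag as ``the main technical obstacle'' is a known fact, but your derivation of it is right. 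The self-contained route buys independence from Riehm's paper and, as a bonus, handles $\gamma\notin\frak o_{k_v}^\times$ without comment, whereas the paper silently takes $\gamma$ to be a unit.

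One sentence needs repair. In the case $\beta=0$ you assert that the right-hand side of $\gamma^2=\Delta-\epsilon\pi_v\alpha^2$ ``has odd valuation $1+2v(\alpha)$''; but that right-hand side is a unit, since $\Delta$ is a unit and $\epsilon\pi_v\alpha^2$ has positive valuation, so there is no parity clash with $2v(\gamma)$ as written. The case is nonetheless covered by your own main claim: for $\alpha\neq 0$ the element $\Delta-\gamma^2=\epsilon\pi_v\alpha^2$ has odd valuation $1+2v(\alpha)$, while $v(\Delta-\gamma^2)$ must be even (and $\alpha=0$ would make $\Delta$ a square). The slip is local and does not affect the argument.
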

\begin{proof} Suppose not. There are $\alpha, \beta\in \frak o_{k_v}$ and $\gamma\in \frak o_{k_v}^\times$ such that $$\Delta= Q(\alpha x+\beta y+\gamma z)=\gamma^2+ Q(\alpha x+\beta y) . $$ Since $$\ord_vQ(\alpha x+\beta y)=\min \{ \ord_v(Q(\alpha x)), \ord_v (Q(\beta y)) \} $$ is odd by the principle of domination in \cite{Ri},  a contradiction is derived by \cite[63:1 and 63:5]{OM}. 
\end{proof}

\begin{lem} \label{out} Let  $L=  L_1 \perp \frak o_{k_v} z$ such that 
$$  L_1\cong \pi_v^{-a} A(\pi_v^a, \delta \pi_v^{b-a} )\ \ \ \text{and} \ \ \ Q(z) = \epsilon \pi_v^r $$ in sense of \cite[93:17]{OM}, where $0\leq a< \ord_v(2)$ and $\epsilon, \delta \in \frak o_{k_v}^\times$. If $r\geq 2$, then $L$ is not universal.  
\end{lem}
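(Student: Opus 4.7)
The plan is to argue by contradiction: assume $L$ is universal and exhibit an element of $\pi_v\frak o_{k_v}^\times$ not lying in $Q(L)$, contradicting Lemma \ref{cri}. Fix a basis of $L_1$ with $Q(x)=1$, $B(x,y)=\pi_v^{-a}$, and $Q(y)=\delta\pi_v^{b-2a}$ (which forces $b\geq 2a$), and write a general element as $w=\alpha x+\beta y+\gamma z$, giving
$$Q(w)=\alpha^2+2\alpha\beta\pi_v^{-a}+\delta\beta^2\pi_v^{b-2a}+\gamma^2\epsilon\pi_v^r.$$
The hypotheses $a<\ord_v(2)$ and $r\geq 2$ yield the valuation bounds $\ord_v(2\alpha\beta\pi_v^{-a})\geq 1$ and $\ord_v(\gamma^2\epsilon\pi_v^r)\geq 2$, so these two summands are invisible modulo $\pi_v$ and modulo $\pi_v^2$ respectively.

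I would target the element $\pi_v$. If $\pi_v=Q(w)$, reduction modulo $\pi_v$ gives $\alpha^2+\delta\beta^2\pi_v^{b-2a}\equiv 0\pmod{\pi_v}$. When $b>2a$ this forces $\alpha\in(\pi_v)$; substituting $\alpha=\pi_v\alpha'$ and dividing by $\pi_v$ yields
$$1=\pi_v\alpha'^2+2\alpha'\beta\pi_v^{-a}+\delta\beta^2\pi_v^{b-2a-1}+\gamma^2\epsilon\pi_v^{r-1}.$$
If $b-2a\geq 2$ the entire right-hand side lies in $(\pi_v)$, giving $1\equiv 0\pmod{\pi_v}$, a contradiction, so $\pi_v\notin Q(L)$. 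The subcase $b-2a=1$ leaves the third summand surviving modulo $\pi_v$, producing only the solvable constraint $\delta\beta^2\equiv 1\pmod{\pi_v}$; one then continues the descent one more step modulo $\pi_v^2$, using the explicit valuation of $2\pi_v^{-a}$ together with the principle of domination \cite{Ri} and \cite[63:1, 63:5]{OM}, to recover an obstruction.

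The boundary case $b=2a$ is the most delicate, since the first reduction modulo $\pi_v$ no longer forces $\alpha\in(\pi_v)$ but only $\alpha^2\equiv-\delta\beta^2\pmod{\pi_v}$. Here I would first invoke Proposition \ref{ref-min} to replace $L_1$ within its isometry class so that $\frak d(-\pi_v^{2a}\det(L_1))\subseteq 4\frak o_{k_v}$, and then target $\pi_v\Delta$ with $\Delta=1+4\rho$. The argument then parallels that of Lemma \ref{neg}: the principle of domination together with the defect relation $\frak d(\Delta)=4\frak o_{k_v}$ from \cite[63:2]{OM} rules out any $w\in L$ with $Q(w)=\pi_v\Delta$. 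The main obstacle throughout is the cross-term $2\alpha\beta\pi_v^{-a}$, whose valuation can be as small as $1$, which interferes with the clean congruence arguments and must be tracked carefully in each boundary case $b-2a\in\{0,1\}$, synchronizing the descent with the quadratic defect of $-\det(L_1)$ exactly as in the earlier lemmas of this section.
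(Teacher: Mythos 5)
Your reduction works only for the easy case $b-2a\ge 2$, which the paper also dispatches in one line; the substance of the lemma lies entirely in the remaining case $b=2a+1$. (Note that the normalization of \cite[93:17]{OM} together with $a<\ord_v(2)$ forces $b\ge 2a+1$, so your ``boundary case'' $b=2a$ never occurs; moreover Proposition \ref{ref-min} could not be invoked there as you suggest, since its hypotheses concern the relative norms of the Jordan components, which are not of the required shape here.) In the case $b=2a+1$ your target $\pi_v$ is simply the wrong element: one has $Q(y)=\delta\pi_v$, so for instance when $\delta$ is a square, $\pi_v$ is visibly represented by $L_1$ alone, and no ``descent modulo $\pi_v^2$'' can produce an obstruction that does not exist. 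The same objection applies to targeting $\pi_v\Delta$ throughout: for $r=3$ the obstruction does lie in $\pi_v\frak o_{k_v}^\times$, but only after a scaling that reduces to $r=2$, where the unrepresented element is a unit, not an element of $(\pi_v)$.

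Concretely, what is missing is the body of the paper's argument. After reducing to $r\in\{2,3\}$ and $b=2a+1$, the proof splits according to whether $k_vL$ is isotropic. In the anisotropic case the unrepresented element is $\Delta\epsilon\pi_v^r$, handled by the principle of domination together with Lemma \ref{neg} and Proposition \ref{ref-min} after a scaling. In the isotropic case with $r=2$ one first shows $-\epsilon\in Q(L_1)$ and normalizes $\epsilon=-1$, writes $\det(L_1)=\omega_1\pi_v^{-2a}$, and then uses \cite[Lemma 3]{Hs} to choose a unit $\eta$ with $\frak d(\eta)=(\pi_v^{2(\ord_v(2)-a)-1})$ and $(\eta,-\omega_1)_v=-1$; a case analysis on valuations combined with Hilbert-symbol computations via \cite[Remark 1]{Xu2} then shows that the \emph{unit} $\omega_1\eta$ is not in $Q(L)$. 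The choice of a unit with prescribed quadratic defect and prescribed Hilbert symbol against $-\omega_1$ is the key idea of the proof, and nothing in your proposal supplies it or a substitute for it.
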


\begin{proof} Since $a<\ord_v(2)$, one has $b\geq 2a+1$. When $b> 2a+1$, then $\pi_v \not \in Q(L)$. So one only needs to consider that $b=2a+1$. Let $\{x, y\}$ be the corresponding basis of $L_1$. Then 
$$ Q(x)=1, \ \ \ B(x, y)=\pi_v^{-a} \ \ \ \text{and} \ \ \  Q(y)= \delta \pi_v^{b-2a} . $$
Since $L\subset (\frak o_{k_v} x+ \frak o_{k_v} y) \perp \frak o_{k_v} (\pi_v^{-[\frac{r-2}{2}]} z) $, one only needs to prove the result for $r=2, 3$. 

For the case that $k_v L$ is anisotropic,  we claim that $ \Delta \epsilon \pi_v^r \not\in Q(L)$. Suppose not, one has 
$$  \begin{cases}   \Delta \epsilon \pi_v^r   \in Q( (\frak o_{k_v} \pi_v x+ \frak o_{k_v}  \pi_v y)\perp \frak o_{k_v} z )  \ \ \ & r=2 \\
 \Delta \epsilon \pi_v^r  \in Q((\frak o_{k_v} \pi_v^2 x+ \frak o_{k_v}  \pi_v y)\perp \frak o_{k_v} z ) \ \ \ &  r=3 \end{cases} $$
by the principle of domination in \cite{Ri}. A contradiction is derived by \cite[Lemma 1.1]{Xu3}, Prop. \ref{ref-min} and Lemma \ref{neg} with scaling $\epsilon^{-1} \pi_v^{-r}$. 
Therefore we can further assume that $k_v L$ is isotropic. 

\medskip

\underline{Case $r=2$}.  In this case, we claim that $\frak o_{k_v}^\times \not \subseteq Q(L)$. Indeed, 
since $k_v L$ is isotropic, there are $\alpha, \beta$ and $\gamma$ in $\frak o_{k_v}$ with $$\ord_v(\alpha) \cdot \ord_v(\beta) \cdot \ord_v(\gamma)=0 \ \ \ \text{ such that } \ \ \ Q(\alpha x+ \beta y+ \gamma z)= 0 . $$ This implies that $\ord_v(\gamma)=\ord_v(\alpha)-1=0$ and $\ord_v(\beta)\geq 1$ by the principle of domination in \cite{Ri}. Therefore $-\epsilon$ is represented by $L_1$. By scaling $-\epsilon$ if necessary, one can simply assume that $\epsilon =-1$ by \cite[93:17]{OM}. Write $$\det(L_1)=\omega_1 \pi_v^{-2a} \ \ \ \text{with} \ \ \ \omega_1 \in \frak o_{k_v}^\times . $$ Then $\frak d (-\omega_1) =(\pi_v^{2a+1})$ by \cite[63:5]{OM}. By \cite[Lemma 3]{Hs}, there is $\eta\in \frak o_{k_v}^\times$ such that  $$\frak d(\eta)= (\pi_v^{2(\ord_v(2)-a)-1}) \ \ \ \text{and the Hilbert symbol} \ \ \  (\eta, -\omega_1)_v=-1 .$$ Then
$\frak d(\omega_1 \eta)= (\pi_v^s)$ satisfying 
\begin{equation} \label{test-defect}  \begin{cases} s =  2a+1 \ \ \  & \text{ for \ $a<\frac{1}{2}(\ord_v(2)-1)$} \\
s = 2\ord_v(2)-2a-1 \ \ \ & \text{ for \ $a>\frac{1}{2}(\ord_v(2) -1)$} \\
s \geq \ord_v(2)  \ \ \ & \text{ for \  $a=\frac{1}{2}(\ord_v(2) -1)$ .} 
\end{cases} \end{equation}
Suppose that the above claim is not true. Then $\eta\omega_1\in Q(L)$. There are $g\in \frak o_{k_v}^\times$, $c$ and $h$ in $\frak o_{k_v}$ such that 
$$  \omega_1 \eta =g^2+2\pi_v^{-a}gh + \delta \pi_v h^2 - c^2 \pi_v^2$$ 
\begin{equation} \label{test-equality} =[(g+c\pi_v)^2+2\pi_v^{-a}(g+c\pi_v)h+\delta \pi_v h^2]-2\pi_v cg- 2\pi_v^{-a+1}ch-2\pi_v^2 c^2 .   \end{equation}

When $\ord_v(h) \geq \ord_v(2)-a-1$, then $$ s \geq \min \{\ord_v(2)-a+\ord_v(h),  \ord_v(2) +1 \} \geq \min \{ 2(\ord_v(2)-a)-1, \ord_v(2) +1 \} $$ by (\ref{test-equality}). A contradiction is derived for $a<\frac{1}{2}(\ord_v(2)-1)$ by (\ref{test-defect}). For $a\geq \frac{1}{2}(\ord_v(2)-1)$, one obtains that 
$$\ord_v(2\pi_v cg+ 2\pi_v^{-a+1}ch+2\pi_v^2 c^2) \geq \min \{\ord_v(2)-a+1+ \ord_v(h),  \ord_v(2) +1 \} $$
$$\geq \min \{ 2(\ord_v(2)-a), \ord_v(2) +1 \} = 2(\ord_v(2)-a) . $$
By \cite[Remark 1]{Xu2}, one concludes that the Hilbert symbol
$$(1-\xi^{-1}(2\pi_v cg+ 2\pi_v^{-a+1}ch+2\pi_v^2 c^2),  \ -\omega_1)_v =1 $$
with $$ \xi=(g+c\pi_v)^2+2\pi_v^{-a}(g+c\pi_v)h+\delta \pi_v h^2 . $$ Since $\xi\in Q(k_v L_1)$, one has $(\xi, -\omega_1)_v=1$.  Therefore 
$$1= (\xi, -\omega_1)_v\cdot ([1-\xi^{-1}(2\pi_v cg+ 2\pi_v^{-a+1}ch+2\pi_v^2 c^2)], -\omega_1)_v=(\omega_1\eta, -\omega_1)_v =(\eta, -\omega_1)_v$$ by (\ref{test-equality}), which contradicts to the choice of $\eta$. 

When $\ord_v(h) < \ord_v(2)-a-1$, then $$  2\ord_v(h) +1 <  \min \{ \ord_v(2)-a+ \ord_v(h) , \ord_v(2) +1 \} $$ for $a>\frac{1}{2}(\ord_v(2)-1)$ or $\ord_v(h) < a$. This implies that 
$$s=2\ord_v(h)+1 < \begin{cases}  2 (\ord_v(2) -a) -1 \ \ \ & \text{ for $a>\frac{1}{2}(\ord_v(2)-1)$} \\ 
2a + 1 \ \ \ & \text{ for $\ord_v(h) < a $} \end{cases} $$ by (\ref{test-equality}) and \cite[63:5]{OM}. A contradiction is derived by (\ref{test-defect}) except $$a\leq \ord_v(h) < \ord_v(2)-a-1 . $$ 

For $a< \ord_v(h) < \ord_v(2)-a-1$, one has 
$$\ord_v(2\pi_v^{-a}(g+c\pi_v)h+\delta \pi_v h^2-2\pi_v cg- 2\pi_v^{-a+1}ch-2\pi_v^2 c^2) \geq 2\ord_v(h)+1 >2a+1.$$ Then 
the Hilbert symbol $$(\eta, \ 1+2\pi_v^{-a}(g+c\pi_v)h+\delta \pi_v h^2-2\pi_v cg- 2\pi_v^{-a+1}ch-2\pi_v^2 c^2)_v=1 $$ by \cite[Remark 1]{Xu2}. This implies that 
$$1= (\eta, \omega_1 \eta)_v = (\eta, -\omega_1)_v$$ by (\ref{test-equality}), which contradicts to the choice of $\eta$. 

For $\ord_v(h)=a<\frac{1}{2}(\ord_v(2)-1)$, one obtains that $ \omega_1 \eta  \in Q(M) $ with $$M= A(1, \delta\pi_v^{2a+1}) \perp < -\pi_v^2> = <1> \perp <\omega_1> \perp <-\pi_v^2>$$ by (\ref{test-equality}). There are $u$, $v$ and $l$ in $\frak o_{k_v}$ such that 
\begin{equation} \label{mini-latt}  \omega_1 \eta = u^2 + \omega_1 v^2 - \pi_v^2 l^2 = (u+\pi_vl)^2 +\omega_1 v^2 - 2 \pi_v u l -2 \pi_v^2 l^2 . \end{equation}  Then 
$$ \eta= (u+\pi_v l)^2 \omega_1^{-1} + v^2 -2\pi_v l \omega_1^{-1} (u+\pi_v l). $$
Since $$\frak d (\omega_1^{-1}) =\frak d (-\omega_1) =(\pi_v^{2a+1}) \ \ \ \text{and} \ \ \ \frak d(\eta)= (\pi_v^{2(\ord_v(2)-a)-1}) \ \ \ \text{with} \ \ \ 2a+1 < \ord_v(2) , $$ one concludes that $$v\in \frak o_{k_v}^\times \ \ \ \text{and} \ \ \ \ord_v(u+\pi_v l) \geq \ord_v(2) -2a -1 $$ by  \cite[63:5]{OM}. This implies that 
$$\ord_v(2\pi_v l \omega_1^{-1} (u+\pi_v l)) \geq 2 \ord_v(2) -2a \ \ \ \text{and} \ \ \ (-\omega_1, \ 1-\lambda^{-1}(2\pi_v l \omega_1^{-1} (u+\pi_v l)) )_v=1$$ 
with $\lambda= (u+\pi_v l)^2 \omega_1^{-1} + v^2$ by \cite[Remark 1]{Xu2}. Since $$(\lambda, -\omega_1)_v=1 \ \ \ \text{ and } \ \ \ \eta= \lambda (1-\lambda^{-1}(2\pi_v l \omega_1^{-1} (u+\pi_v l)) , $$ one obtains that $(-\omega_1, \eta)_v=1$ which contradicts to the choice of $\eta$. 

\medskip

\underline{Case $r=3$}.  In this case, we claim that $\pi_v \frak o_{k_v}^\times \not \subseteq Q(L)$. Suppose not, one has 
$$ \pi_v \frak o_{k_v}^\times  \subseteq  Q( (\frak o_{k_v} \pi_v x+ \frak o_{k_v} y)\perp \frak o_{k_v} z )  $$ and reduces to the above case by scaling $\delta^{-1} \pi_v^{-1}$. A contradiction is derived.
\end{proof}

\begin{lem}\label{out1}  If  $L=  L_1 \perp \frak o_{k_v} z$ such that 
$$  L_1\cong 2^{-1} A(2, 2\rho)\ \ \ \text{and} \ \ \ Q(z) = \epsilon \pi_v^r  \ \ \ \text{with} \ \ \ r\geq 2,$$
 then $\pi_v\not\in Q(L)$. 
\end{lem}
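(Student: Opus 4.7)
The plan is to show directly that no element $w \in L$ can have $Q(w) = \pi_v$. Writing $L_1 = \frak o_{k_v} x + \frak o_{k_v} y$ in accordance with the convention $2^{-1}A(2, 2\rho)$, so that $Q(x) = 1$, $B(x, y) = 2^{-1}$ and $Q(y) = \rho$, every $w = ax + by + cz \in L$ satisfies
\[
Q(w) \;=\; f(a,b) + \epsilon c^2 \pi_v^r, \qquad f(a,b) := a^2 + ab + \rho b^2.
\]
Because $r \geq 2$, the $z$-contribution lies in $\pi_v^2 \frak o_{k_v}$, so $Q(w) = \pi_v$ would force $\ord_v(f(a,b)) = 1$. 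The lemma therefore reduces to showing that the binary form $f$ never takes a value of odd valuation.

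I would establish this by reducing modulo $\pi_v$. In the residue field $\kappa_v$, which has characteristic $2$, one has $\overline{f(a,b)} = \bar a^2 + \bar a \bar b + \bar \rho \bar b^2$. If $\bar b \neq 0$, setting $t = \bar a/\bar b$ gives $\overline{f(a,b)} = \bar b^2 (t^2 + t + \bar \rho)$, and the crux is that $t^2 + t + \bar \rho \neq 0$ for every $t \in \kappa_v$. This is dictated by the defining property of $\Delta$: since $\frak d(\Delta) = 4 \frak o_{k_v}$, the extension $k_v(\sqrt \Delta)/k_v$ is unramified quadratic by \cite[63:5]{OM}; putting $\beta = (\sqrt \Delta - 1)/2$ gives $\beta^2 + \beta = \rho$, so the residue extension is the nontrivial Artin--Schreier extension $\kappa_v(\bar \beta)/\kappa_v$ with $\bar \beta^2 + \bar \beta = \bar \rho$. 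Hence $\bar \rho \notin \wp(\kappa_v) = \{t^2 + t : t \in \kappa_v\}$ and $t^2 + t + \bar \rho$ has no zero in $\kappa_v$, so $f(a,b)$ is a unit whenever $\bar b \neq 0$. In the remaining case $b \in \pi_v \frak o_{k_v}$, one has $f(a,b) \equiv a^2 \pmod{\pi_v \frak o_{k_v}}$, which is a unit if $a$ is a unit and otherwise lies in $\pi_v^2 \frak o_{k_v}$.

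In every case $\ord_v(f(a,b))$ is either $0$ or at least $2$, never equal to $1$, yielding the sought contradiction. The only real obstacle is the residue-field identification $\bar \rho \notin \wp(\kappa_v)$, but this is essentially forced by the hypothesis $\frak d(\Delta) = 4 \frak o_{k_v}$. Alternatively one could multiply through by $4$ and read off the same contradiction from $4\pi_v = (2a+b)^2 + (4\rho - 1)b^2 + 4 \epsilon c^2 \pi_v^r$ via the principle of domination in \cite{Ri} together with \cite[63:5]{OM}, but the direct reduction modulo $\pi_v$ seems the most transparent route.
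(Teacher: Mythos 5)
Your proof is correct. The paper disposes of this lemma in a single line by citing the principle of domination from \cite{Ri}; that principle supplies exactly the fact you verify by hand, namely that every nonzero value of $Q$ on the anisotropic unimodular lattice $2^{-1}A(2,2\rho)$ has order $0$ or $\geq 2$ (in fact even order), so that $f(a,b)+\epsilon c^2\pi_v^r$ with $r\geq 2$ can never have order exactly $1$. Your route to that key fact --- reducing $f(a,b)=a^2+ab+\rho b^2$ modulo $\pi_v$ and observing that $T^2+T+\bar\rho$ has no root in the residue field --- is a self-contained, more elementary substitute for the citation: a root $\bar t$ would lift by Hensel's lemma (the derivative $2T+1$ reduces to a unit), giving $\rho=s^2+s$ and hence $\Delta=1+4\rho=(1+2s)^2$, contradicting $\frak d(\Delta)=4\frak o_{k_v}\neq 0$; your phrasing via the unramified extension $k_v(\sqrt{\Delta})=k_v(\beta)$ is an equivalent packaging of the same point. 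The surrounding valuation bookkeeping (the $z$-contribution lies in $\pi_v^2\frak o_{k_v}$, the two cases $\bar b\neq 0$ and $\bar b=0$) matches what the paper's one-line proof implicitly relies on, so the logical skeleton is the same; what you add is a direct proof of the local input rather than an appeal to Riehm.
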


\begin{proof} It follows from the principle of domination in \cite{Ri}.
\end{proof}

\begin{lem} \label{pos}  Let  $L= (\frak o_{k_v} x + \frak o_{k_v} y) \perp \frak o_{k_v} z$ such that
$$   \underline{Case \ 1.}  \ \ \ 
 Q(x)=1, \ \ \ B(x, y)=\pi_v^{-a}, \ \ \ Q(y)=0 \ \ \ \text{and} \ \ \ Q(z) =\epsilon \pi_v $$
$$ \underline{Case \ 2.} \ \ \ 
 Q(x)=\epsilon \pi_v, \ \ \ B(x, y)=\pi_v^{-a}, \ \ \ Q(y)= 0 \ \ \ \text{and} \ \ \ Q(z) =1 $$ where $0\leq a< \ord_v(2)$ is an integer and $\epsilon \in \frak o_{k_v}^\times$.   

If $L$ is one of the above lattices, then $\frak o_{k_v}^\times \subseteq Q(L)$.  
\end{lem}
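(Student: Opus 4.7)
The plan is to show $Q(L) \supseteq \frak o_{k_v}^\times$ in each of the two cases by explicitly solving the representation equation via a Hensel-type successive approximation. Set $m := \ord_v(2) - a$, which satisfies $m \geq 1$ by hypothesis. Both cases will be reduced to a single auxiliary statement:

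\medskip\noindent
\textit{Claim A.} For any units $u, \epsilon' \in \frak o_{k_v}^\times$ and any integer $m'$ with $1 \leq m' \leq m$, there exist $\alpha \in \frak o_{k_v}^\times$ and $\gamma \in \frak o_{k_v}$ satisfying $u \equiv \alpha^2 + \epsilon'\pi_v \gamma^2 \pmod{\pi_v^{m'}}$.
\medskip

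Claim A is proved by induction on $m'$. The base $m' = 1$ uses that squaring is bijective on the characteristic-$2$ residue field: choose $\alpha \in \frak o_{k_v}^\times$ with $\alpha^2 \equiv u \pmod{\pi_v}$ and set $\gamma = 0$. For the step $m' \to m'+1$ (with $m' < m$), one modifies $\alpha$ when $m'$ is even and $\gamma$ when $m'$ is odd, by adding a term $\pi_v^{\lfloor m'/2 \rfloor}c$ with $c$ chosen so that $c^2$ matches the leading residue of the error. The linear cross-term $2\pi_v^{\lfloor m'/2 \rfloor}c\alpha$ (or its $\gamma$-analogue) has valuation at least $\ord_v(2) + \lfloor m'/2 \rfloor \geq m' + 1$ because $m' \leq m - 1 < \ord_v(2)$, so it vanishes modulo $\pi_v^{m'+1}$ and does not disturb the induction.

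Case 1 follows directly from Claim A applied with $\epsilon' = \epsilon$ and $m' = m$. Since $Q(\alpha x + \beta y + \gamma z) = \alpha^2 + 2\pi_v^{-a}\alpha\beta + \epsilon \pi_v \gamma^2$, the resulting $\alpha, \gamma$ satisfy $u - \alpha^2 - \epsilon\pi_v\gamma^2 \in \pi_v^m \frak o_{k_v} = 2\pi_v^{-a}\alpha \frak o_{k_v}$, so $\beta := (u - \alpha^2 - \epsilon\pi_v\gamma^2)/(2\pi_v^{-a}\alpha) \in \frak o_{k_v}$ yields $u = Q(\alpha x + \beta y + \gamma z)$.

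For Case 2, where $Q(\alpha x + \beta y + \gamma z) = \gamma^2 + \epsilon\pi_v\alpha^2 + 2\pi_v^{-a}\alpha\beta$, it is $\gamma$ (not $\alpha$) that must be a unit in order for the value to be a unit, while $\alpha$ may be any element of $\frak o_{k_v}$ and the $\beta$-term absorbs errors of valuation at least $m + \ord_v(\alpha)$. If $u$ is a unit square, take $\alpha = \beta = 0$ and $\gamma = \sqrt u$. Otherwise let $d := \ord_v(\frak d(u))$; by \cite[63:2]{OM}, either $d$ is odd with $1 \leq d \leq 2\ord_v(2) - 1$ or $d = 2\ord_v(2)$. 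Since $\ord_v(2) \geq m$, the case $d = 2\ord_v(2)$ is absorbed into the range $d \geq 2m - 1$, which is handled by choosing any unit $\alpha_0$, setting $\alpha = \pi_v^{m-1}\alpha_0$, and picking a unit $\gamma$ with $u - \gamma^2 \in \pi_v^d \frak o_{k_v} \subseteq \pi_v^{2m-1}\frak o_{k_v}$; both $u - \gamma^2$ and $\epsilon\pi_v\alpha^2$ then lie in $2\pi_v^{-a}\alpha \frak o_{k_v}$, and $\beta$ closes the gap. For $d = 2i + 1$ odd with $i \leq m - 2$, set $\alpha = \pi_v^i\alpha_0$ and pick a unit $\gamma$ with $u - \gamma^2 = \pi_v^d v$ for some unit $v$; the representation condition reduces to $v \equiv \epsilon\alpha_0^2 \pmod{\pi_v^{m-i-1}}$ for suitable $\alpha_0$ and $\gamma$. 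Replacing $\gamma$ by $\gamma + \pi_v^{i+1}s$ shifts $v$ by $-\pi_v s^2$ modulo $\pi_v^{m-i-1}$ (the other correction, $-2\pi_v^{-i}s\gamma$, has valuation $\ord_v(2) - i \geq m - i > m - i - 1$), so it suffices to find $\alpha_0 \in \frak o_{k_v}^\times$ and $s \in \frak o_{k_v}$ with $v/\epsilon \equiv \alpha_0^2 + \epsilon^{-1}\pi_v s^2 \pmod{\pi_v^{m-i-1}}$; this is exactly Claim A with $u = v/\epsilon$, $\epsilon' = \epsilon^{-1}$ and $m' = m - i - 1$.

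The main obstacle is this last reduction in Case 2: one must combine two independent sources of flexibility---the choice of $\gamma$ among square roots of $u$ modulo $\pi_v^d$, and the choice of $\alpha_0$---and verify that the cross-terms introduced by perturbing $\gamma$ are negligible at the relevant precision $m - i - 1$, so that the residual problem collapses back onto the same Hensel-type statement used for Case 1, now at strictly lower precision.
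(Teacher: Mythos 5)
Your proof is correct, but it takes a genuinely different route from the paper's. The paper first invokes \cite[63:11]{OM} to write $\frak o_{k_v}^\times$ as the union of the units represented by the diagonal sublattice $\frak o_{k_v}x\perp\frak o_{k_v}z$ (the form $\xi^2+\epsilon\pi_v\eta^2$ up to reordering) and the $\Delta$-multiples of that set; it then catches each $\Delta$-multiple $\Delta\, Q(\beta x+\gamma z)$ by a single explicit vector $w=\xi x+\eta y+\gamma z$ (resp.\ $\beta x+\eta y+\xi z$), where $\xi$ is a square root supplied by \cite[63:1]{OM} and the one correction term $\eta y$ uses the isotropic vector $y$ to insert the factor $1+4\rho$. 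Your argument dispenses with \cite[63:11]{OM} entirely and instead constructs the representation by successive approximation: your Claim A is a clean two-variable Hensel lift (valid because all the cross-terms carry a factor of $2$ and the target precision never exceeds $\ord_v(2)$), Case 1 then closes immediately since $2\pi_v^{-a}\alpha\frak o_{k_v}=\pi_v^{m}\frak o_{k_v}$ for $\alpha$ a unit, and Case 2 is handled by a case split on the quadratic defect of the target unit via \cite[63:2]{OM}, with the subtle subcase $d=2i+1$, $i\le m-2$, correctly reduced back to Claim A at precision $m-i-1$ after checking that perturbing $\gamma$ by $\pi_v^{i+1}s$ contributes only $-\pi_v s^2$ at that precision. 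I checked the valuation estimates ($\ord_v(2)+\lfloor m'/2\rfloor\ge m'+1$ for $m'\le m-1$, and $\ord_v(2)-i\ge m-i$) and they all hold. The trade-off: the paper's proof is shorter and leverages O'Meara's structure theory of units represented by binary forms, while yours is longer and more computational but essentially self-contained (only \cite[63:1]{OM} and \cite[63:2]{OM} are needed) and makes explicit the mechanism by which the hyperbolic-type vector $y$ absorbs errors of valuation at least $\ord_v(2)-a+\ord_v(\alpha)$.
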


\begin{proof} Since $$ \frak o_{k_v}^\times = (\frak o_{k_v}^\times \cap Q(\frak o_{k_v} x\perp \frak o_{k_v} z ) )\cup \Delta (\frak o_{k_v}^\times \cap Q(\frak o_{k_v} x\perp \frak o_{k_v} z ) )  $$ by \cite[63:11]{OM}, one only needs to show that  $$\Delta (\frak o_{k_v}^\times \cap Q(\frak o_{k_v} x\perp \frak o_{k_v} z )) \subset Q(L) . $$ 
Indeed, for any $(1+4\rho) Q(\beta x+ \gamma z)\in \frak o_{k_v}^\times $ with $\beta, \gamma \in \frak o_{k_v}$,  one can take $\xi \in \frak o_{k_v}^\times$ such that 
$$ \xi^2 = \begin{cases} \beta^2 + 4\rho \pi_v \epsilon \gamma^2 \ \ \ \ & \text{for Case 1} \\
\gamma^2 + 4\rho \pi_v \epsilon \beta^2  \ \ \ \ & \text{for Case 2} \end{cases} $$
by \cite[63:1]{OM}. For Case 2, one can assume that $ord_v(\beta) < ord_v(2)$. Otherwise, one has $$Q(\beta x+ \gamma z) \in (\frak o_{k_v}^\times)^2 $$ by  \cite[63:1]{OM}.
Since $$2x+(\rho-\epsilon \pi_v) \pi_v^a y +z \in L \ \ \ \text{with} \ \ \ Q(2x+(\rho-\epsilon \pi_v) \pi_v^a y +z)= 1+4\rho , $$ one concludes that $(1+4\rho) Q(\beta x+ \gamma z) \in Q(L)$
as desired. 

Let 
$$ \eta= \begin{cases} 2\rho \pi_v^a \beta^2 \xi^{-1} \ \ \ & \text{for Case 1} \\
2\rho \pi_v^a \gamma^2 \beta^{-1} \ \ \ & \text{for Case 2} \end{cases} $$ in $\frak o_{k_v}$ and
 $$ w= \begin{cases} \xi x+ \eta y + \gamma z  \ \ \ & \text{for Case 1} \\
\beta x+ \eta y + \xi z \ \ \ & \text{for Case 2} \end{cases} $$
in $L$. Then $(1+4\rho) Q(\beta x+ \gamma z)=Q(w) \in Q(L)$ as desired. 
\end{proof}

\begin{prop} \label{univ-bi} Let $L$ be an $\frak o_{k_v}$-lattice of rank 3 with the minimal norm Jordan splitting $L=L_1\perp L_2$. Then $L$ is universal if and only if $\frak n(L)=\frak o_{k_v}$ and $\rank (L_1)=2$ and one of the conditions holds.

1) $L_1\cong 2^{-1}A(0,0)$.

2) $(\pi_v)\frak n(L_1)= \frak n(L_2)$ and $k_vL$ is isotropic.

3) $\frak n(L_1)= (\pi_v) \frak n(L_2)$ and $k_vL$ is isotropic. 
\end{prop}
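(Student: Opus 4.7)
The plan is to establish both directions by case analysis on the minimal norm Jordan splitting $L = L_1 \perp L_2$, using the obstructions \autoref{out}, \autoref{out1}, \autoref{norm} and \autoref{hyper} on the necessity side and the construction lemmas \autoref{ref-min}, \autoref{pos}, \autoref{cri} on the sufficiency side.

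For necessity, $1 \in Q(L)$ forces $\frak n(L) = \frak o_{k_v}$. \autoref{univ-one} ensures some Jordan component has rank $\geq 2$, so $(\rank L_1, \rank L_2) \in \{(2,1),(1,2)\}$. To rule out the $(1,2)$ configuration, scale $L_1 \cong \langle 1\rangle$; the scale-ordering $\frak s(L_2) \subsetneq \frak o_{k_v}$ combined with \autoref{norm} pins $\frak n(L_2) = (\pi_v)$ and forces $L_2$ to be $(\pi_v)$-modular and odd, and a Hilbert-symbol/defect argument modeled on the proof of \autoref{out} then exhibits a unit not in $Q(L)$, giving a contradiction. Hence $(\rank L_1, \rank L_2) = (2,1)$.

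Now assume $(\rank L_1,\rank L_2) = (2,1)$. If both $L_1$ and $L_2$ have norm $\frak o_{k_v}$, then \autoref{hyper} (applied with $i(L) = 2$ and $j = 1$) forces $L_1 \cong 2^{-1}A(0,0)$, giving Case~(1). Otherwise exactly one of $L_1, L_2$ has the minimal norm $\frak o_{k_v}$. If $\frak n(L_1) = \frak o_{k_v}$ and $L_1$ is $(2^{-1})$-modular even, then \cite[93:11]{OM} gives $L_1 \cong 2^{-1}A(2, 2\rho)$, and \autoref{out1} combined with the principle of domination rules out $\frak n(L_2) \subseteq (\pi_v^2)$ and the anisotropic sub-case of $\frak n(L_2) = (\pi_v)$, leaving Case~(2). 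If $\frak n(L_1) = \frak o_{k_v}$ and $L_1$ is odd unimodular, then $L_2 = \langle \epsilon \pi_v^r\rangle$ with $r \geq 1$; \autoref{out} forces $r = 1$, and its proof also disposes of the anisotropic sub-case, yielding Case~(2). Finally, if $\frak n(L_2) = \frak o_{k_v}$ and $\frak n(L_1) \neq \frak o_{k_v}$, necessarily $L_1$ is even with $\ord_v(2) \geq 2$ and $\frak n(L_1) = (\pi_v)$, and a symmetric argument lands in Case~(3).

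For sufficiency, Case~(1) is immediate since $Q(2^{-1}A(0,0)) = \frak o_{k_v}$. For Cases~(2) and~(3), the plan is to verify $\frak o_{k_v}^\times \subseteq Q(L)$ and $\pi_v \frak o_{k_v}^\times \subseteq Q(L)$ and then invoke \autoref{cri}. First apply \autoref{ref-min} to adjust the Jordan splitting so that $\frak d(-\pi_v^{-2s_1}\det L_1') \subseteq 4\frak o_{k_v}$; combined with the isotropy of $k_v L$, this produces a sublattice fitting one of the configurations of \autoref{pos}, delivering $\frak o_{k_v}^\times \subseteq Q(L)$. To obtain $\pi_v\frak o_{k_v}^\times \subseteq Q(L)$, scale the quadratic form by $\epsilon^{-1}\pi_v^{-1}$ for an appropriate unit, which interchanges the roles of $L_1$ and $L_2$ (Case~(2) becomes Case~(3) and conversely), and apply \autoref{pos} again. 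The main obstacle is the necessity direction: one must thread the parity of $L_1$, the forced norm of $L_2$, and the isotropy of $k_v L$ through the several obstruction lemmas cleanly, without re-deriving the defect and Hilbert-symbol analyses inside \autoref{out} and \autoref{out1}; the scaling swap in the sufficiency argument is also delicate because one must verify the off-diagonal data still satisfies $0 \leq a < \ord_v(2)$ as demanded by \autoref{pos}.
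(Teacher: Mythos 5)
Your overall strategy is the paper's: necessity through the obstruction results (Prop.~\ref{norm}, Prop.~\ref{hyper}, Lemma~\ref{out}, Lemma~\ref{out1}, and a defect argument for the rank-one-first configuration, which is exactly what Lemma~\ref{one-dim} supplies), sufficiency through Prop.~\ref{ref-min}, Lemma~\ref{pos} and Lemma~\ref{cri}. The paper organizes the necessity by the index $i(L)$ rather than by the shape of $L_1$, but the lemmas invoked are the same. Two points in your necessity argument are genuine gaps as written.

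First, your dichotomy for $\frak n(L_1)=\frak o_{k_v}$ --- ``odd unimodular'' versus ``$(2^{-1})$-modular even'' --- omits the improperly modular components with $\frak s(L_1)=(\pi_v^{-a})$ for $0<a<\ord_v(2)$ and $\frak n(L_1)=\frak o_{k_v}$. These occur (for instance $Q(x)=1$, $B(x,y)=\pi_v^{-a}$, $Q(y)=0$) and are precisely the range $0\le a<\ord_v(2)$ for which Lemma~\ref{out} is stated, so the same reasoning applies, but the case must appear in the enumeration. Second, the claim that the proof of Lemma~\ref{out} ``also disposes of the anisotropic sub-case'' overreaches: Lemma~\ref{out} is stated and proved only for $r\ge 2$, and its anisotropic step concerns $\Delta\epsilon\pi_v^r$ for $r\in\{2,3\}$. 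For the surviving configuration $r=1$ with $k_vL$ anisotropic (and symmetrically in your Case~(3) branch) one must argue separately; the paper normalizes $\frak d(-\det(L_1)\pi_v^{-2s_1})=4\frak o_{k_v}$ via Prop.~\ref{ref-min} and then applies Lemma~\ref{neg} (after scaling by $\pi_v^{-1}$ in the Case~(2) branch) to produce an element of $\pi_v\frak o_{k_v}^\times$, respectively $\frak o_{k_v}^\times$, that is not represented. Since this is the only place where the isotropy hypotheses of Cases~(2) and~(3) are actually used on the necessity side, it cannot be left implicit. On sufficiency your outline is the paper's; the ``scaling swap'' you worry about is handled there by first passing to the sublattice $(\pi_v)x+\frak o_{k_v}y\perp L_2$ before rescaling, which keeps the lattice integral and within the hypotheses $0\le a<\ord_v(2)$ of Lemma~\ref{pos}.
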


\begin{proof} Sufficiency. For Case 1), since $2^{-1}A(0,0)$ is universal, one obtains that $L$ is universal. 

For Case 2), one can assume that $$\frak d(-\det(L_1)\pi^{-2s_1})\subseteq 4\frak o_{k_v}$$ by Prop. \ref{ref-min}, where $s_1=\ord_v(\frak s(L_1))$. Since $k_v L$ is isotropic, one concludes that $$\frak d(-\det(L_1)\pi^{-2s_1})= 0$$ by the principle of domination in \cite{Ri}.  
Therefore $\frak o_{k_v}^\times \subseteq Q(L)$ by Lemma \ref{pos} Case 1. 

Write $L_1=\frak o_{k_v}x+ \frak o_{k_v} y$ with $$Q(x)\in \frak o_{k_v}^\times, \ \ \ B(x, y)=\pi_v^{s_1} \ \ \ \text{and} \ \ \  Q(y)=0 . $$
Then $$ \pi_v \frak o_{k_v}^\times \subset Q(((\pi_v) x+ \frak o_{k_v} y) \perp L_2) \subset Q(L) $$
by Lemma \ref{pos} Case 2 with scaling $\pi_v$. Therefore $L$ is universal by Lemma \ref{cri}.

Case 3) follows from the same argument as that of Case 2). 

\bigskip
 
Necessity.  Since $Q(L)=\frak o_{k_v}$, one has $\frak n(L)=\frak o_{k_v}$. Suppose $\rank (L_1)=1$. This implies  $$\frak n(L_1)=\frak s(L_1)=\frak n(L)=\frak o_k  \ \ \ \text{and} \ \ \ \frak s(L_2)=\frak n(L_2) =(\pi_v)  $$ by Prop. \ref{norm}. Therefore $\frak o_{k_v}^\times \nsubseteq Q(L)$ by Lemma \ref{one-dim}. A contradiction is derived. 

If $i(L)=1$ and $L_1\not \cong 2^{-1}A(0,0)$,  one obtains $(\pi_v)\frak n(L_1)= \frak n(L_2)$ by Lemma \ref{out} and Lemma \ref{out1}. Suppose that $k_v L$ is not isotropic. 
By Prop.\ref{ref-min}, one can assume  $$\frak d(-\det(L_1)\pi_v^{-2s_1})=4\frak o_{k_v}$$ with $s_1=\ord_v(\frak s(L_1))$. 
Then $\pi_v\frak o_{k_v}^\times \nsubseteq Q(L)$ by Lemma \ref{neg} with scaling $\pi_v^{-1}$ and a contradiction is derived. 

If $i(L)=2$ and $L_1\not \cong 2^{-1}A(0,0)$, one obtains $\frak n(L_1)= (\pi_v) \frak n(L_2)$ by Prop.\ref{hyper} and Prop.\ref{norm}. Suppose that $k_v L$ is not isotropic. Then $\frak o_{k_v}^\times \nsubseteq Q(L)$ by Prop.\ref{ref-min} and Lemma \ref{neg} and a contradiction is derived. 
\end{proof}

Recall the norm group $\frak g(L)$ and the weight $\frak w(L)$ of $\frak o_{k_v}$-lattice $L$ is defined as follows  $$ \frak g(L)= Q(L) + 2\frak s(L) \ \ \ \text{and} \ \ \ \frak w(L)= (\pi_v )\frak m(L)+ 2\frak s(L)$$ where $\frak m(L)$ is the largest fractional ideal contained in $\frak g(L)$ by \cite[\S 93 A]{OM}. 

\begin{prop}\label{univ-ter} A unimodular $\frak o_{k_v}$-lattice $L$ of rank 3 over dyadic local fields is universal if and only if $\frak w(L)=(\pi_v)$ and $k_vL$ is isotropic. 
\end{prop}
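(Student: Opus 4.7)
The plan is to prove the two directions separately, with necessity being essentially computational and sufficiency requiring a structural case analysis.

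For necessity, if $L$ is universal then $Q(L)=\frak o_{k_v}$, so $\frak g(L)=Q(L)+2\frak s(L)=\frak o_{k_v}$. Hence $\frak m(L)=\frak o_{k_v}$, and $\frak w(L)=(\pi_v)\frak o_{k_v}+2\frak s(L)=(\pi_v)$, using $(2)\subseteq(\pi_v)$ in the dyadic case. For the isotropy of $k_vL$, I would invoke the standard local theory: an anisotropic ternary quadratic space over a non-archimedean local field omits exactly one square class of $k_v^\times$, whereas $Q(L)=\frak o_{k_v}$ meets every square class non-trivially through representatives such as $1,\Delta,\pi_v,\Delta\pi_v$. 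Hence $k_vL$ must be isotropic.

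For sufficiency, a preliminary observation is that any unimodular lattice of rank three automatically satisfies $\frak n(L)=\frak o_{k_v}$. Otherwise $\frak n(L)\subseteq(2)$; pairing two vectors with unit bilinear product would produce a unimodular binary sublattice of type $A(2a,2b)$, and its orthogonal complement inside $L$ would be a one-dimensional summand of even norm, hence $(2^k)$-modular for some $k\geq 1$, contradicting the unimodularity of $L$. This allows us to split off a unit-norm vector and write $L=\langle\epsilon\rangle\perp M$ with $\epsilon\in\frak o_{k_v}^\times$ and $M$ binary unimodular. Combined with the isotropy of $k_vL=\langle\epsilon\rangle\perp k_vM$, a short argument shows that $k_vM$ represents $-\epsilon$, so either $M\cong A(0,0)$, or $M$ admits a unit-norm vector and $L$ becomes diagonal $\langle\epsilon,\alpha,\beta\rangle$ with $\epsilon,\alpha,\beta\in\frak o_{k_v}^\times$ whose ternary form is isotropic over $k_v$.

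In the case $L\cong\langle\epsilon\rangle\perp A(0,0)$, Lemma \ref{pos} Case 1 (after relabeling) gives $\frak o_{k_v}^\times\subseteq Q(L)$, and after rescaling by $\pi_v^{-1}$ and applying Lemma \ref{pos} Case 2 we obtain $\pi_v\frak o_{k_v}^\times\subseteq Q(L)$; Lemma \ref{cri} then finishes. In the diagonal case, I would use the rational isotropy of $k_vL$ to produce a primitive integral isotropic vector $v\in L$, and then parametrize the representations of every unit coset and uniformizer coset by linear combinations $\alpha v+\beta e_i$ with the diagonal basis vectors $e_i$, in the spirit of Lemma \ref{one-dim} and Lemma \ref{pos}.

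The principal obstacle will be the diagonal subcase: in the dyadic setting, Witt cancellation fails integrally, so one cannot simply peel off a hyperbolic plane $A(0,0)$ from $\langle\epsilon,\alpha,\beta\rangle$ to reduce to the first case. Careful tracking of quadratic defects, in the style of Proposition \ref{ref-min}, will be required to confirm that the parametrized representations cover both the unit cosets and the uniformizer cosets, thereby guaranteeing $Q(L)=\frak o_{k_v}$ via Lemma \ref{cri}.
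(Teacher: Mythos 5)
Your necessity half is correct and is actually more direct than the paper's. For the weight: universality gives $\frak g(L)=Q(L)+2\frak s(L)=\frak o_{k_v}$, hence $\frak m(L)=\frak o_{k_v}$ and $\frak w(L)=(\pi_v)+2\frak o_{k_v}=(\pi_v)$. For isotropy: since an anisotropic ternary space over $k_v$ omits exactly one square class while $Q(L)=\frak o_{k_v}$ meets every square class, $k_vL$ must be isotropic. (The paper instead exhibits explicit non-represented elements, $Q(x)(1+\pi_v)$ and $Q(x)\Delta$, via the normal form of \cite[93:18 (iv)]{OM} and \cite[63:5]{OM}; both routes are fine.)

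The sufficiency half has a genuine gap: you never invoke the hypothesis $\frak w(L)=(\pi_v)$, and ``unimodular, rank $3$, isotropic'' is not enough. Concretely, take $L=\langle 1\rangle\perp A(0,0)$ over a dyadic field with $\ord_v(2)\geq 2$: it is unimodular, $k_vL$ is isotropic, and it lands in your case $M\cong A(0,0)$, where you conclude $\pi_v\frak o_{k_v}^\times\subseteq Q(L)$. But $Q(L)=\{c^2+2ab\}$ contains no element of odd valuation smaller than $\ord_v(2)$, so $\pi_v\notin Q(L)$; this lattice has $\frak w(L)=2\frak o_{k_v}\subseteq(\pi_v^2)$ and is excluded only by the weight hypothesis. (Note also that Lemma \ref{pos} does not apply to $\langle\epsilon\rangle\perp A(0,0)$: in both of its cases the binary component contains a vector of unit norm, whereas $\frak n(A(0,0))=2\frak o_{k_v}$.) A second, related slip: from ``$k_vM$ represents $-\epsilon$'' you cannot conclude that $M\cong A(0,0)$ or that $M$ has a unit-norm vector --- $M\cong A(2,2\rho)$ can represent $-\epsilon$ rationally while $\frak n(M)=2\frak o_{k_v}$; again only the weight condition (together with isotropy) rules this out. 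Finally, the diagonal subcase, which you acknowledge leaving open, is precisely what the paper's argument avoids: by \cite[93:18 (iv)]{OM} the hypothesis $\frak w(L)=(\pi_v)$ produces a splitting $L=\frak o_{k_v}x\perp(\frak o_{k_v}y+\frak o_{k_v}z)$ with $Q(x)\in\frak o_{k_v}^\times$, $\ord_vQ(y)=1$ and $B(y,z)=1$, and isotropy plus the principle of domination allows one to take $Q(z)=0$; this normal form feeds directly into Lemma \ref{pos} (Case 2 for $\frak o_{k_v}^\times$, Case 1 after scaling by $\pi_v$ for $\pi_v\frak o_{k_v}^\times$) and Lemma \ref{cri}. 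Without extracting that normal form from the weight condition, your sufficiency argument both proves too much and leaves its hardest case unfinished.
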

\begin{proof}  Sufficiency. Since $k_vL$ is isotropic, one has $$ L= \frak o_{k_v} x \perp (\frak o_{k_v} y + \frak o_{k_v} z)   \ \ \text{with} \ \  Q(x)\in \frak o_{k_v}^{\times} ,  \ \ \ord_v(Q(y))=1,  \ \ B(y, z)=1  \ \ \text{and}   \ \ Q(z) =0  $$ by \cite[93:18 (iv)]{OM} and the principle of domination in \cite{Ri}. Then $\frak o_{k_v}^\times \subseteq Q(L)$ by Lemma \ref{pos} Case 2.  Since 
$$ \pi_v \frak o_{k_v}^\times \subset Q((\pi_v) x \perp (\frak o_{k_v} y + \frak o_{k_v} z) )\subset Q(L) $$ by Lemma \ref{pos} Case 1 with scaling $\pi_v$, one concludes that $L$ is universal by Lemma \ref{cri}. 

\bigskip

Necessity.  Suppose $\frak w(L)\subseteq (\pi_v^2)$. Since $ L= \frak o_{k_v} x \perp (\frak o_{k_v} y + \frak o_{k_v} z) $  with 
$$ Q(x)\in \frak o_{k_v}^{\times} ,  \ \ \ord_vQ(y)=\ord_v\frak w(L),  \ \ B(y, z)=1  \ \ \text{and}   \ \  \ord_v(Q(z)) \geq 2\ord_v2 -\ord_vQ(y)  $$ by \cite[93:18 (iv)]{OM}, one concludes
 $Q(x) (1+\pi_v)\not\in Q(L)$ by \cite[63:5]{OM}. A contradiction is derived.
 
 Suppose $k_vL$ is anisotropic. Then $ L= \frak o_{k_v} x \perp (\frak o_{k_v} y + \frak o_{k_v} z) $  with 
$$ Q(x)\in \frak o_{k_v}^{\times} ,  \ \ \ord_vQ(y)=1,  \ \ B(y, z)=1  \ \ \text{and}   \ \  \frak d(1-Q(y)Q(z))=4\frak o_{k_v}  $$ by \cite[93:18 (iv)]{OM}. Therefore $Q(x) \Delta \not\in Q(L)$ by \cite[63:1, 63:5]{OM} and the principle of domination in \cite{Ri}. A contradiction is derived. 
\end{proof}

\section{Indefinite universal quadratic forms over number fields}

In this section, we will apply our computations in \S \ref{local} to study indefinite universal quadratic forms over a number field $k$.

\begin{defi}  Let $L$ be an $\frak o_k$-lattice.

(1) We say that $L$ is (globally) universal if $Q(L)=\frak o_k$. 

(2) We say that $L$ is locally universal if $L_v$ is universal for all $v\in \Omega_k\setminus \infty_k$ and $Q(k_v L)=k_v$ for all $v\in \infty_k$. 

\end{defi}

Recall that two $\frak o_k$-lattices $M$ and $N$ in the same quadratic space $V=kM=kN$ are in same class if there is $\sigma\in O(V)$ such that $M=\sigma N$.  This class is denoted by $cls(M)$. Moreover, we say $M$ and $N$ are in the same proper class if there is $\tau\in O^+(V)$ such that $M=\tau N$. This sub-class is denoted by $cls^+(M)$. 

Let $L$ be an $\frak o_k$-lattice on a quadratic space $(V, Q)$ and $O_\Bbb A(V)$ be the adelic group of $O(V)$. One can define $gen(L)$ to be the orbit of $L$ under the action of $
O_\Bbb A(V)$. It is well known that the number of proper classes in $gen(L)$ is finite by \cite[103:4]{OM}. 

\begin{prop}\label{one-class} Suppose that $|\Pic(\frak o_k)|$ is odd and $L$ is an $\frak o_k$-lattice with $\rank (L)\geq 3$. If $L$ is locally universal, then $gen(L)$ contains only a single proper class. In particular, $L$ is universal.
\end{prop}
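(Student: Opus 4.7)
The plan is to reduce the problem to a computation with the idele class group. Since $L$ is locally universal, $Q(k_v L)=k_v$ at each archimedean $v$, so $k_v L$ is isotropic there; combined with $\rank(L)\geq 3$ this lets us apply strong approximation for the spin group $\mathrm{Spin}(kL)$. By the standard theory \cite[\S 102, \S 104]{OM} the proper classes in $gen(L)$ are in bijection with the finite abelian group
$$\Phi(L):=J_k\Big/\Big(k^\times\cdot\prod_{v\in\Omega_k}\theta(O^+(L_v))\Big),$$
where $J_k$ is the idele group of $k$ and $\theta$ is the spinor norm, so it suffices to show $\Phi(L)=0$.

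The next step is to verify that $\theta(O^+(L_v))\supseteq \frak o_{k_v}^\times$ at every finite $v$ and $\theta(O^+(L_v))=k_v^\times$ at every archimedean $v$. The archimedean case is immediate from local universality. At non-dyadic finite $v$, Proposition \ref{nondyadic} tells us that $L_v$ has either a unimodular Jordan component of rank $\geq 3$, a hyperbolic plane $A(0,0)$, or a unimodular binary paired with a $(\pi_v)$-modular component of rank $\geq 2$; in each configuration one locates $x,y\in L_v$ with $Q(x)=u$ an arbitrary unit and $Q(y)=1$, and then $\tau_x\tau_y\in O^+(L_v)$ has spinor norm $u$. At dyadic $v$, Corollary \ref{binary}, Proposition \ref{univ-bi}, and Proposition \ref{univ-ter} show that $L_v$ always contains either a sublattice $2^{-1}A(0,0)$ or an isotropic unimodular plane, along which an analogous reflection calculation (or the integral spinor norm formulas of \cite{Hs}, \cite{Xu1}, \cite{Be}) yields $\frak o_{k_v}^\times\subseteq\theta(O^+(L_v))$. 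Combining these inclusions with $\theta(O^+(L_v))\supseteq k_v^{\times 2}$, we obtain
$$\prod_{v}\theta(O^+(L_v))\;\supseteq\;\prod_{v\notin\infty_k}\frak o_{k_v}^\times\cdot\prod_{v\in\infty_k}k_v^\times\cdot J_k^2,$$
so $\Phi(L)$ is a quotient of $\Pic(\frak o_k)/2\Pic(\frak o_k)$ by class field theory. Oddness of $|\Pic(\frak o_k)|$ makes this group vanish, hence $gen(L)=cls^+(L)$.

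For the universality claim, any $\alpha\in\frak o_k$ lies in $Q(L_v)$ for every finite $v$ by local universality, hence in $Q(k_v L)$ for every $v$, and Hasse--Minkowski together with the genus-representation theorem (in rank $\geq 3$) produces some $L'\in gen(L)$ with $\alpha\in Q(L')$. Since $gen(L)$ is a single proper class, $L'=\sigma L$ for some $\sigma\in O^+(kL)$, and therefore $\alpha\in Q(L)$. The technically heaviest step will be the dyadic spinor-norm verification in paragraph two; it is made tractable by the very short classification of universal dyadic lattices already established in Section \ref{local}.
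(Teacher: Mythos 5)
Your proposal is correct and follows essentially the same route as the paper: express $h^+(L)$ via the idelic spinor norm formula, show $\theta(O^+(L_v))\supseteq \frak o_{k_v}^\times(k_v^\times)^2$ at all finite $v$, and conclude that $h^+(L)$ divides the $2$-part of $|\Pic(\frak o_k)|$. The only difference is that the paper obtains the spinor norm containment in one line directly from Lemma \ref{cri} (universality means every unit is represented, so products of two symmetries in unit-valued vectors already give all units modulo squares), whereas your case analysis through the classification results of Section \ref{local} is unnecessary overhead.
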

\begin{proof} By \cite[101:8; 102:7;  104:5]{OM}, the number $h^+(L)$ of proper classes in $gen(L)$ is given by $$h^+(L)= [\Bbb I_k: k^\times (\prod_{v\in \infty_k} k_v^\times \times \prod_{v\not\in \infty_k }\theta(O^+(L_v)))]
$$ where $\Bbb I_k$ is the idelic group of $k$, $O^+(L_v)=\{\sigma \in O^+(V_v): \sigma (L_v) =L_v \}$ and $\theta$ is the spinor norm map. Since $$\theta(O^+(L_v))\supseteq \frak o_{k_v}^\times (k_v^\times)^2$$ by Lemma \ref{cri}, one has 
$$ h^+(L) \mid  [ \Bbb I_k: k^\times ((\prod_{v\in \infty_k} k_v^\times)\times (\prod_{v\not\in \infty_k} \frak o_{k_v}^\times)) \Bbb I_k^2 ] .$$ Since 
$$ |\Pic(\frak o_k)|=[\Bbb I_k : k^\times ((\prod_{v\in \infty_k} k_v^\times)\times (\prod_{v\not\in \infty_k} \frak o_{k_v}^\times)) ]$$ 
by \cite[33:14]{OM}, one concludes that $h^+(L)$ divides 2-part of $|\Pic(\frak o_k)|$. Since $|\Pic(\frak o_k)|$ is odd, one obtains $h^+(L)=1$ as desired. This in particular implies that $L$ is universal.
\end{proof}

As \cite[Theorem 3 and Theorem 4]{Ro} over $\Bbb Z$, one can apply Prop.\ref{one-class} to determine all universal ternary quadratic forms over the number fields of which the class numbers are odd by Prop.\ref{nondyadic}, Prop.\ref{univ-one}, Prop.\ref{univ-bi} and Prop.\ref{univ-ter}.  
On the other hand, there do exist universal quadratic forms such that the genera contain more than one (proper) classes. 

\begin{exam} Let $k=\Bbb Q(\sqrt{-p q_1\cdots q_s})$ where $p$ is a primes with $p\equiv 3 \mod 8$  and $q_i$ is a prime with $q_i\equiv 1 \mod 8$ for $1\leq i\leq s$. Consider $f=x^2+y^2+z^2$ over $\frak o_k$. Then $f$ is universal over $\frak o_k$ and $\text{gen}(f)$ contains $2^s$ (proper) classes. 
\end{exam}

\begin{proof} By \cite[Theorem]{EH}, the quadratic form $f$ is universal over $\frak o_k$. By \cite[92:5]{OM} and \cite[Prop.A]{Hs}, the number $h(f)$ of (proper) classes in $gen(f)$ is given by 
$$ h(f)= [ \Bbb I_k: k^\times ((\prod_{v\in \{2, \infty_k\}} k_v^\times)\times (\prod_{v\not\in \{2, \infty_k\}} \frak o_{k_v}^\times)) \Bbb I_k^2 ] $$ where $\Bbb I_k$ is the idelic group of $k$ as Prop.\ref{one-class}. Since $(i_v)_{v\in \Omega_k}=2\cdot (j_v)_{v\in \Omega_k}$ where 
$$ i_v= \begin{cases} 1 \ \ \ &\text{$v\neq 2$} \\  
2 \ \ \ & \text{$v=2$} \end{cases}  \ \ \ \text{and} \ \ \  j_v =\begin{cases} 2^{-1} \ \ \ & \text{$v\neq 2$} \\
1 \ \ \ & \text{$v=2$} \end{cases} ,$$ one concludes that 
$$ k^\times ((\prod_{v\in \{2, \infty_k\}} k_v^\times)\times (\prod_{v\not\in \{2, \infty_k\}} \frak o_{k_v}^\times))= k^\times ((\prod_{v\in  \infty_k} k_v^\times)\times (\prod_{v\not\in \infty_k} \frak o_{k_v}^\times)) . $$ Therefore 
$$ h(f)= [\Pic(\frak o_k) :  \Pic(\frak o_k)^2] = 2^s $$  by \cite[33:14]{OM} and \cite[Theorem 6.1]{Cox}. 
\end{proof}

By strong approximation for spin groups, every integral quadratic forms with more than three variables which is locally universal is globally universal. However, this is not true any more for integral quadratic forms with two or three variables. Indeed, Estes and Hsia \cite[Theorem]{EH} determined all imaginary quadratic fields such that $x^2+y^2+z^2$ is locally universal but not globally. 

Let $L$ be a ternary $\frak o_k$-lattice such that $k_vL$ is indefinite for some $v\in \infty_k$. An integer $a\in \frak o_k$ is called exceptional for $\text{gen}(L)$ if $a$ is represented by $\text{gen}(L)$ but is not represented by every lattice in $\text{gen}(L)$. 
A purely local criterion for determining exceptional integers was given by Schulze-Pillot in \cite{SP} and generalized by Hsia, Shao and Xu in \cite{HSX}. 

\begin{thm}\label{SP}  (Schulze-Pillot) Let $L$ be a ternary $\frak o_k$-lattice such that $k_vL$ is indefinite for some $v\in \infty_k$. An integer $a\in \frak o_k$ is exceptional for $\text{gen} (L)$ if and only if $-a \cdot \det(L)\not\in (k^\times)^2$ and 
$$ \theta_v (X(L_v, a) ) = N_{k_v(\sqrt{-a \cdot \det(L)})/k_v}(k_v(\sqrt{-a \cdot \det(L)})^\times) \ \ \ \text{for all $v\in \Omega_k$} $$ where $\theta_v$ is the spinor norm map and 
$$ X(L_v, a) = \{ \sigma\in O^+(k_vL): \ x\in \sigma (L_v) \} $$ with $x\in L_v$ satisfying $Q(x)=a$. 
\end{thm}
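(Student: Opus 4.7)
The strategy is to combine strong approximation for spin groups with an orbit-stabilizer analysis at the representing vector, a method going back to Kneser and refined by Schulze-Pillot. First, because $\rank(L)=3$ and $k_vL$ is indefinite at some archimedean place, strong approximation for $\mathrm{Spin}(V)$ holds with respect to that place, so proper spinor genera coincide with proper classes inside $\mathrm{gen}(L)$. Consequently, $a$ is exceptional precisely when the proper classes in $\mathrm{gen}(L)$ containing a representation of $a$ form a nonempty proper subset of all proper classes, and the question is reduced to counting such spinor genera.

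Next, I would fix $x\in V$ with $Q(x)=a$ and analyze the pointwise stabilizer $H=\Stab_{O^+(V)}(x)$. This $H$ is the spin group of the binary orthogonal complement $W=(kx)^\perp$, whose discriminant is $-a\det(L)$ modulo squares, giving $H\cong R^1_{E/k}\mathbb{G}_m$ with $E=k(\sqrt{-a\det(L)})$. When $-a\det(L)\in(k^\times)^2$, $E$ splits and $H$ is the split torus $\mathbb{G}_m$, for which strong approximation extends that of $\mathrm{Spin}(V)$; every proper class representing $a$ locally then represents it globally, so $a$ cannot be exceptional. This forces the first condition $-a\det(L)\notin(k^\times)^2$.

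In the remaining case, the set of proper classes in $\mathrm{gen}(L)$ that represent $a$ via the orbit of $x$ corresponds to the image of the adelic orbit $\prod_v X(L_v,a)$ inside $O^+(V)\backslash O^+_{\mathbb{A}}(V)/\prod_v O^+(L_v)$. Taking spinor norms and applying strong approximation for $\mathrm{Spin}(V)$ once more, this image is controlled by the idelic quotient
$$\mathbb{I}_k\big/\bigl(k^\times\cdot\textstyle\prod_v\theta_v(O^+(L_v))\cdot\prod_v\theta_v(X(L_v,a))\bigr).$$
Since $H\subseteq\Stab(x)$, the local norms $N_{E_v/k_v}(E_v^\times)$ are automatically contained in $\theta_v(X(L_v,a))$ at every $v$. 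So $a$ fails to be represented by some proper class in $\mathrm{gen}(L)$ if and only if $\prod_v\theta_v(X(L_v,a))$ lies inside $k^\times\cdot\prod_v\theta_v(O^+(L_v))\cdot N_{E/k}(\mathbb{I}_E)$; by the Hasse norm theorem for the quadratic extension $E/k$, this is sharp and forces the local equalities $\theta_v(X(L_v,a))=N_{E_v/k_v}(E_v^\times)$ at every place.

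The main obstacle lies in the passage between local spinor-norm data and the global obstruction. One has to verify rigorously that, modulo $k^\times\cdot N_{E/k}(\mathbb{I}_E)$, the idelic image $\prod_v\theta_v(X(L_v,a))$ genuinely separates spinor genera and that no hidden contribution from $\theta_v(O^+(L_v))$ absorbs the obstruction. This requires precise computations of $\theta_v(X(L_v,a))$ at dyadic and wildly ramified places (as carried out in Schulze-Pillot and extended to number fields in Hsia--Shao--Xu), together with the Hasse norm theorem as the class-field-theoretic bridge converting the local equalities into the existence of nonrepresenting classes.
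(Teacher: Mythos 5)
The first thing to say is that the paper does not prove Theorem \ref{SP} at all: it is quoted from Schulze-Pillot \cite{SP} (as generalized to number fields in \cite{HSX}), and the sentence immediately preceding the statement says exactly that. So there is no internal proof to compare against; your proposal has to be measured against the argument in those references. On that score your outline is the right one --- it is the standard Kneser--Schulze-Pillot spinor-genus argument: strong approximation for the spin group at the indefinite archimedean place identifies proper classes with proper spinor genera inside $\mathrm{gen}(L)$, the stabilizer of a representing vector $x$ acts through $SO(W)\cong R^1_{E/k}\mathbb{G}_m$ on the binary complement $W=(kx)^\perp$ with $E=k(\sqrt{-a\det L})$, and the set of spinor genera representing $a$ is governed by the index of $k^\times\prod_v\theta_v(X(L_v,a))$ in the idele group $\mathbb{I}_k$.

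Two specific points in your sketch are inaccurate, though neither is fatal to the strategy. First, the split torus $\mathbb{G}_m$ does \emph{not} satisfy strong approximation; the correct reason the case $-a\det L\in(k^\times)^2$ is harmless is that $W$ is then a hyperbolic plane, so $\theta_v(X(L_v,a))\supseteq\theta_v(SO(W_v))=k_v^\times$ at every $v$, whence $k^\times\prod_v\theta_v(X(L_v,a))=\mathbb{I}_k$ and every spinor genus represents $a$. Second, the class-field-theoretic input is not the Hasse norm theorem but the index computation $[\mathbb{I}_k:k^\times N_{E/k}(\mathbb{I}_E)]=2$ combined with the sandwich $N_{E_v/k_v}(E_v^\times)\subseteq\theta_v(X(L_v,a))\subseteq k_v^\times$, where the first inclusion has index at most $2$: if even one local factor strictly exceeds the local norm group, the product $k^\times\prod_v\theta_v(X(L_v,a))$ jumps to all of $\mathbb{I}_k$ and no class is missed. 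This same dichotomy supplies the direction you gloss over, namely that when all the local equalities hold exactly half the spinor genera represent $a$, so $a$ is represented by some class but not by all --- both halves of what ``exceptional'' requires. Finally, as you yourself concede, the real content in applications is the explicit determination of $\theta_v(X(L_v,a))$ at dyadic places, which your sketch (like the paper) delegates to \cite{SP} and \cite{Xu4}.
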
 

The explicit computation of $X(L_v, a)$ over non-dyadic and unramified dyadic fields is given in \cite{SP} and over general dyadic fields in \cite{Xu4}. 

It is natural to ask what kind of number fields admit an integral quadratic form with two or three variables which are locally universal but not globally. We give a complete answer to this question by applying Theorem \ref{SP}. 

\begin{thm}\label{infinite} Let $k$ be a number field. 

(1)  There is a binary $\frak o_k$-lattice which is locally universal but not globally if and only if $|\Pic(\frak o_k)| > 1$. In this case, each binary $\frak o_k$-lattice $L$ which is locally universal but not globally is given by 
$$  L= \frak a x+ \frak a^{-1} y \ \ \ \text{with} \ \ \ Q(x)=Q(y)=0 \ \ \ \text{and} \ \ \ B(x,y)=\frac{1}{2} $$
where $\frak a$ is a non-principal fractional ideal of $k$. Moreover, there is one to one correspondence between the set of proper classes of such binary $\frak o_k$-lattices with the set of non-trivial classes in $\Pic(\frak o_k)$ by sending $L$ to $\overline{\frak a}\in \Pic(\frak o_k)$. 

(2)  There is a ternary $\frak o_k$-lattice which is locally universal but not globally if and only if $|\Pic(\frak o_k)|$ is even. In this case, there are infinitely many classes of such ternary free $\frak o_k$-lattices. 
\end{thm}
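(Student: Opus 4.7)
The plan splits into the two parts. For part (1), I will first pin down the local shape of a locally universal binary lattice and then work in a hyperbolic plane. By Corollary \ref{binary} the dyadic completions satisfy $L_v \cong 2^{-1}A(0,0)$, while Proposition \ref{nondyadic}(1) specialized to rank $2$ forces $L_v \cong A(0,0)$ at every non-dyadic $v$. Hence $k_vL$ is a hyperbolic plane at every $v$, so Hasse--Minkowski gives $kL \cong \mathbb{H}$ globally. Fixing a hyperbolic basis $\{x,y\}$ with $Q(x)=Q(y)=0$ and $B(x,y)=\tfrac12$, write $L = \frak a x + \frak b y$; the local identities $\frak a_v \frak b_v = \frak o_{k_v}$ give $\frak b = \frak a^{-1}$, the claimed normal form.

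The representation criterion now reduces to the formula $Q(\alpha x + \beta y) = \alpha\beta$. A unit lies in $Q(L)$ iff $(\alpha)(\beta) = \frak o_k$ with $(\alpha) \subseteq \frak a$, $(\beta) \subseteq \frak a^{-1}$, which forces $(\alpha) = \frak a$ to be principal; conversely, if $\frak a = (\lambda)$ the rescaling $(x,y) \mapsto (\lambda^{-1}x, \lambda y)$ reduces $L$ to the standard hyperbolic plane $\frak o_k x' \perp \frak o_k y'$, which is manifestly universal. This gives the equivalence with $|\Pic(\frak o_k)| > 1$. For the class count, every element of $O^+(\mathbb{H})$ has the form $(x,y) \mapsto (tx, t^{-1}y)$ for $t \in k^\times$ (the swap $x \leftrightarrow y$ has determinant $-1$); such a $\sigma$ sends $L_\frak a$ to $L_{t\frak a}$, so $L_\frak a \mapsto [\frak a]$ is a well-defined bijection from proper classes of non-universal locally universal binary lattices onto the nontrivial classes of $\Pic(\frak o_k)$.

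For part (2), the non-existence direction (when $|\Pic(\frak o_k)|$ is odd) is immediate from Proposition \ref{one-class}: any locally universal rank-$3$ $\frak o_k$-lattice then has a singleton-proper-class genus and is therefore globally universal. For the existence direction when $|\Pic(\frak o_k)|$ is even, the strategy uses Schulze-Pillot's Theorem \ref{SP}. Pick a character of order $2$ of $\Pic(\frak o_k)$; by class field theory it corresponds to an everywhere-unramified quadratic extension $K = k(\sqrt d)/k$ for some $d \in k^\times$. Build a free ternary $\frak o_k$-lattice $L$ in the isotropic ternary quadratic space $V$ with $-\det V \equiv d \pmod{(k^\times)^2}$, arranged so that at every finite $v$ the completion $L_v$ is locally universal (using Propositions \ref{nondyadic} and \ref{univ-ter}); in particular, the spinor norm image $\theta_v(O^+(L_v))$ always contains $\frak o_{k_v}^\times (k_v^\times)^2$. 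For a well-chosen $a \in \frak o_k$ with $-a\det L$ in the square class of $d$, the local images $\theta_v(X(L_v,a))$ equal $N_{K_v/k_v}(K_v^\times) = \frak o_{k_v}^\times (k_v^\times)^2$ at every finite $v$ (since $K/k$ is unramified), so Theorem \ref{SP} declares $a$ exceptional for $gen(L)$; then any proper class of $gen(L)$ that misses $a$ furnishes the required locally-but-not-globally universal lattice. To produce infinitely many free classes I would iterate the construction at distinct non-dyadic primes $\frak p$, inflating the $\frak p$-primary Jordan component while preserving local universality (using the flexibility of Proposition \ref{nondyadic}(1)), yielding lattices of unbounded $\frak p$-adic discriminant that lie in pairwise distinct genera.

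The principal obstacle is the dyadic verification of the Schulze-Pillot condition: identifying $\theta_v(X(L_v,a))$ with $N_{K_v/k_v}(K_v^\times)$ at $v \mid 2$ uses the delicate classification of $X(L_v,a)$ over dyadic local fields from \cite{Xu4} combined with the dyadic propositions of Section \ref{local}. A secondary difficulty is ensuring that the family of modifications used to produce infinitely many classes preserves local universality at all primes simultaneously.
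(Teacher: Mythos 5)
Your part (1) and the single-lattice half of part (2) follow the paper's own route: the same reduction via Proposition \ref{nondyadic} and Corollary \ref{binary} to a $\frac12\frak o_k$-modular lattice on a hyperbolic plane, the same normal form $\frak a x+\frak a^{-1}y$ with the divisibility argument showing a unit is represented iff $\frak a$ is principal, the same description of $O^+$ of the hyperbolic plane for the proper-class count, and the same use of an unramified quadratic extension $K=k(\sqrt a)$ together with Theorem \ref{SP} to make an integer exceptional for the genus of $(\frak o_kx+\frak o_ky)\perp\frak o_kz$. (One small imprecision: at primes split in $K$ the norm group is all of $k_v^\times$, not $\frak o_{k_v}^\times(k_v^\times)^2$; this does not affect the argument.)

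The genuine gap is in your production of infinitely many classes. Inflating the Jordan component at a non-dyadic prime $\frak p$ puts you in a new genus, and exceptionality of $a$ does not transfer: the Schulze--Pillot condition $\theta_v(X(L_v,a))=N_{K_\frak P/k_v}(K_\frak P^\times)$ must be re-verified at $v=\frak p$. This can genuinely fail. If $\frak p$ is inert in $K$ and the new splitting at $\frak p$ is $A(0,0)\perp\langle\epsilon\pi_\frak p^{r}\rangle$ with $r$ odd, then the product of a symmetry in a unit-norm vector of $A(0,0)$ with the symmetry in $z$ lies in $O^+(L_\frak p)$ and has spinor norm of odd valuation, so $\theta_\frak p(X(L_\frak p,a))\supsetneq N_{K_\frak P/k_\frak p}(K_\frak P^\times)$; by the definition of exceptional integers, every lattice in that genus then represents $a$, and your certificate of non-universality evaporates. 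You also never address freeness, which the statement of (2) requires. The paper avoids both problems by a different (and simpler) device: having produced one explicit non-universal $M=(\frak a x+\frak a^{-1}y)\perp\frak o_kz$ via \cite[Theorem 4.1]{HSX}, it chooses by Tchebotarev infinitely many \emph{principal} prime ideals $\frak p_i$ and sets $M_i=(\frak a x+\frak a^{-1}y)\perp\frak p_iz\subset M$. Then $Q(M_i)\subseteq Q(M)\not\ni 1$ gives non-universality with no further spinor computation, local universality holds by Propositions \ref{nondyadic} and \ref{univ-bi}, freeness follows from \cite[81:5 and 81:8]{OM} since $\frak p_i$ is principal, and distinct discriminants separate the classes. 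If you wish to keep your plan, you must at minimum restrict to primes split in $K$ and redo the local verification at each one; the nested-sublattice trick is the cleaner repair.
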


\begin{proof}  For (1), we consider a binary $\frak o_k$-lattice $L$ which is locally universal. Then $kL$ is isotropic and $L$ is $\frac{1}{2}\frak o_{k}$-modular by Prop.\ref{nondyadic} and Cor.\ref{binary}. There is a fractional ideal $\frak a$ of $\frak o_k$ such that $$ L= \frak a x+ \frak a^{-1} y \ \ \ \text{ with } \ \ \ Q(x)=Q(y)=0 \ \ \ \text{ and } \ \ \ B(x,y)=\frac{1}{2}$$ by \cite[82:21a]{OM}. 

If $\frak a$ is principal, then $L$ is universal.

If $\frak a$ is not principal, we claim that $L$ is not universal. Indeed, suppose not, then $1\in Q(L)$. There are $a\in \frak a$ and $b\in \frak a^{-1}$ such that $Q(ax+by)=ab=1$. Namely, both $a \frak a^{-1}$ and $b \frak a$ are contained in $\frak o_k$. By the unique factorization theorem for Dedekind domain \cite[22:7]{OM}, one concludes that $\frak a=(a)$ and $\frak a^{-1}=(b)$ which is a contradiction.  

Since any $\sigma\in O^+(V)$ is given by $$\sigma(x)=\alpha x; \ \ \ \sigma(y)=\alpha^{-1}y $$ for $\alpha \in k^\times$, this implies that the above correspondence is well-defined and bijective.

\bigskip

For (2),  the necessity follows from Prop.\ref{one-class}. Now we prove the sufficiency. Since $|\Pic(\frak o_k)|$ is even,  there is an unramified quadratic extension $K=k(\sqrt{a})$ with $a\in \frak o_k$ by \cite[Chapter IV, (7.1) Theorem]{Neu}.  Let 
$$L= (\frak o_k x + \frak o_k y) \perp \frak o_k z \ \ \ \text{with} \ \ \ Q(x)=Q(y)=0, \ \ \ B(x,y)=\frac{1}{2} \ \ \ \text{and} \ \ \ Q(z)= 4a . $$
Then every lattice in $\text{gen} (L)$ is locally universal by Prop.\ref{nondyadic} and Corollary \ref{binary}. We claim that $1$ is an exceptional integer for $\text{gen} (L)$. Indeed, since $K=k(\sqrt{a})$ is unramified, one concludes that $\ord_v(a)$ is even for all non-archimedean primes and the quadratic defect $$\frak d(\pi_v^{-\ord_v(a)} a) \subseteq (\pi_v^{2\ord_v(2)}) $$ for all dyadic primes $v$. Then one can verify 
$$ \theta_v (X(L_v, 1)) = N_{K_{\frak P}/k_v}(K_\frak P^\times) \ \ \ \text{for all $v\in \Omega_k$}  $$ where $\frak P$ is a prime of $K$ above $v$ by \cite[Satz 3]{SP} and \cite[Theorem 2.1]{Xu4}. 
By Theorem \ref{SP}, there is an $\frak o_k$-lattice $M\in \text{gen} (L)$ such that $M$ does not represent $1$. Namely, $M$ is locally universal but not globally universal as desired. 

One can even construct such $M$ explicitly. Indeed, choose an ideal $\frak a$ of $\frak o_k$ such that the image of $\frak a$ in $\Gal (K/k)$ is not trivial under the Artin map by \cite[Chapter IV, (8.2) Theorem]{Neu1}. Then we take
$$M=(\frak a x+ \frak a^{-1} y) \perp \frak o_k z \in gen (L) $$ which is a free $\frak o_k$-module by \cite[81:5 and 81:8]{OM}. By \cite[Theorem 4.1]{HSX}, one concludes that $1\not\in Q(M)$. Based on $M$, we can construct infinitely many classes of free $\frak o_k$-lattices which are locally universal but not globally.  Indeed, by Tchebotarev's density theorem (see \cite[Chapter V, (6.4) Theorem]{Neu1}), there are infinitely many principal prime ideals $\frak p_i$ of $\frak o_k$ for $i=1, \cdots$. Let
$$ M_i = (\frak a x+ \frak a^{-1} y) \perp \frak p_i z \subset M$$ for $i=1, \cdots$. Then $M_i$ is locally universal by Prop.\ref{nondyadic} and Prop.\ref{univ-bi} and free by \cite[81:5 and 81:8]{OM} for $i=1,\cdots$. Since $1\not\in Q(M)$, one obtains that $1\not\in Q(M_i)$ for $i=1,\cdots$. Since $\det(M_i)\neq \det(M_j)$ for $i\neq j$,  one has $\text{cls} (M_i)\neq \text{cls} (M_j)$ for $i\neq j$. The proof is complete.
\end{proof}

\begin{cor} \label{binary-glo} A locally universal binary $\frak o_k$-lattice $L$ is globally universal if and only if $L$ represents 1. In this case, 
$$ L= \frak o_k x+ \frak o_k y \ \ \ \text{with} \ \ \ Q(x)=Q(y)=0 \ \ \ \text{and} \ \ \ B(x,y)=\frac{1}{2} .$$  
\end{cor}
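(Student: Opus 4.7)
The plan is to read this off directly from Theorem \ref{infinite} (1), whose proof already contains the key computation.

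The forward direction is immediate: if $L$ is globally universal then $Q(L)=\frak o_k$, so in particular $1\in Q(L)$.

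For the reverse direction, assume $L$ is locally universal. By Theorem \ref{infinite} (1), the lattice $L$ must have the form
$$L=\frak a x+\frak a^{-1}y,\qquad Q(x)=Q(y)=0,\qquad B(x,y)=\tfrac12,$$
for some fractional ideal $\frak a$ of $\frak o_k$. Suppose moreover that $L$ represents $1$. Then there exist $a\in\frak a$ and $b\in\frak a^{-1}$ with
$$1=Q(ax+by)=2abB(x,y)=ab.$$
As observed in the proof of Theorem \ref{infinite} (1), this forces $a\frak a^{-1}\subseteq\frak o_k$ and $b\frak a\subseteq\frak o_k$, so by unique factorization of fractional ideals in the Dedekind domain $\frak o_k$ (\cite[22:7]{OM}) we get $\frak a=(a)$ principal. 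Invoking Theorem \ref{infinite} (1) once more, this shows $L$ is globally universal.

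To obtain the claimed normal form, set $x'=ax$ and $y'=by$. Then $x'\in\frak a x$, $y'\in\frak a^{-1}y$, and since $a$ generates $\frak a$ and $b=a^{-1}$ generates $\frak a^{-1}$, we have $L=\frak o_k x'+\frak o_k y'$ with $Q(x')=Q(y')=0$ and $B(x',y')=ab\cdot\tfrac12=\tfrac12$, as required. There is no real obstacle: the corollary is essentially a restatement of the non-principal case analysis already carried out in the proof of Theorem \ref{infinite} (1), so the only point to be careful about is citing the structure result correctly and rescaling the basis to absorb the generator of $\frak a$.
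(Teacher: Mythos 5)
Your proposal is correct and follows exactly the route the paper intends: the paper's own proof of this corollary is literally ``It follows from the proof of Theorem \ref{infinite} (1),'' and you have simply unpacked that proof (the structure result $L=\frak a x+\frak a^{-1}y$, the computation $Q(ax+by)=ab$, and the unique-factorization argument forcing $\frak a=(a)$). The explicit rescaling $x'=ax$, $y'=by$ to exhibit the normal form is a welcome detail the paper leaves implicit.
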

\begin{proof} It follows from the proof of Theorem \ref{infinite} (1). 
\end{proof}

\begin{rem} For Theorem \ref{infinite} (1), one also has one to one correspondence 
$$ \text{ \{ classes of binary locally universal $\frak o_k$-lattices \} }  \longleftrightarrow  \  \Pic(\frak o_k)/\mu_2 $$ where $\mu_2$ acts on $\Pic(\frak o_k)$ by $(-1)\circ \frak a= \frak a^{-1}$ for any $\frak a\in \Pic(\frak o_k)$. The unique globally universal binary lattice corresponds the trivial element in $\Pic(\frak o_k)/\mu_2$. This is because $$O(V)=O^+(V)\cup \tau O^+(V)$$ where $V=Fx+Fy$ with $Q(x)=Q(y)=0, B(x,y)=\frac{1}{2}$ and $\tau(x)=y, \ \tau(y)=x$. 
\end{rem}

\begin{exam} Since every locally universal binary $\frak o_k$-lattice $L$ is free by Theorem \ref{infinite} (1) and \cite[81:5 and 81:8]{OM}, one can express such binary lattices in term of homogeneous polynomials of degree 2 with two variables. 

Let $k= \Bbb Q(\sqrt{-5})$. The class number of $k$ is 2 by the class number formula (see \cite[Chapter VII, (5.11) Corollary, (2.9) Theorem and (10.5) Corollary]{Neu1}). By using Minkowski bound (see \cite[Chapter I, \S 6, Exercise 3]{Neu1}), one has the ideal $\frak a= (2, 1+\sqrt{-5})$ is an ideal of $\frak o_k$ above $2$ which is not principal. 

Consider a binary $\frak o_k$-lattice $$L= \frak a \xi_1+ \frak a^{-1} \xi_2 \ \ \ \text{with} \ \ \ Q(\xi_1)=Q(\xi_2)=0 \ \ \ \text{and} \ \ \ B(\xi_1, \xi_2)=\frac{1}{2} $$ which is universal by Prop.3.2. Since $$\frak a^{-1}=(\frac{1-\sqrt{-5}}{2}, 2) \ \ \ \text{ and } \ \ \ (1+\sqrt{-5})\frak a^{-1} + \frak a= \frak o_k , $$ one can make the following substitution 
$$ \begin{cases} \eta_1=(1+\sqrt{-5})\xi_1+\xi_2 \\ \eta_2= 2\xi_1+\frac{1}{2}(1-\sqrt{-5}) \xi_2  .\end{cases} $$ Then $L=\frak o_k \eta_1+ \frak o_k \eta_2 $ by \cite[81:8]{OM} and the corresponding quadratic form is 
$$ (1+\sqrt{-5})x^2 +5 xy + (1-\sqrt{-5}) y^2 $$ which is unique class of locally universal but not globally binary quadratic form. 

By Theorem \ref{infinite} (2), there are infinitely many ternary free $\frak o_k$-lattices which are locally universal but not globally. Since the Hilbert class field of $k$ is given by $k(\sqrt{-1})$ by \cite[Theorem 6.1]{Cox}, one can exhibit the following infinitely many ternary integral quadratic forms which are locally universal but not globally
$$ (1+\sqrt{-5})x^2 +5 xy + (1-\sqrt{-5}) y^2-4p^2 z^2 $$ where $p$'s are the primes such that $(\frac{-5}{p})=-1$ and $p\equiv 1 \mod 4$ by the proof of Theorem \ref{infinite} (2).  
\end{exam}

\bigskip

For ternary $\frak o_k$-lattices, we extend and refine  \cite[Theorem 22]{Di} (see also \cite[Theorem 5]{Ro}) to general number fields. 

\begin{prop} \label{ternary-glo} Let $L$ be an $\frak o_k$-lattice of rank 3. If $L$ is locally universal, then $kL$ is isotropic. 
\end{prop}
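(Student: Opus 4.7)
The plan is to prove this by a local-global argument: show $k_vL$ is isotropic at every place $v\in \Omega_k$, then apply the Hasse--Minkowski principle for ternary quadratic spaces to conclude $kL$ is isotropic.

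At archimedean places, the hypothesis of local universality gives $Q(k_vL) = k_v$. For $v$ complex this is automatic and every quadratic space of dimension $\geq 2$ over $\BC$ is isotropic. For $v$ real, a ternary real quadratic form that surjects onto $\BR$ must take both signs, hence is indefinite and therefore isotropic.

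At a non-dyadic finite place $v$, I would apply Prop.~\ref{nondyadic} to the rank-$3$ lattice $L_v$. Because $\rank(L_v) = 3$, condition (2) of Prop.~\ref{nondyadic} cannot occur (it requires $\rank(L_1) + \rank(L_2) \geq 4$), so the only possibilities are that $L_1$ has rank $3$ (a unimodular ternary lattice over a non-dyadic local field, which is isotropic by a classical diagonalization argument since every such form represents $0$ nontrivially) or $L_1 = A(0,0)$ (which is isotropic as a hyperbolic plane orthogonal to a one-dimensional factor). Either way $k_vL$ is isotropic.

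At a dyadic finite place $v$, I would first use Prop.~\ref{univ-one} to rule out the case of a Jordan splitting into three one-dimensional components, forcing the minimal norm Jordan splitting of $L_v$ to have at most two components. If there is a single component then $L_v$ is unimodular and Prop.~\ref{univ-ter} directly asserts $k_vL$ isotropic. If there are two components $L_1 \perp L_2$, Prop.~\ref{univ-bi} together with the necessity part of its proof rules out $\rank(L_1) = 1$, so $\rank(L_1) = 2$, and we fall into one of its three cases: either $L_1 \cong 2^{-1}A(0,0)$ (in which case $L_v$ contains a hyperbolic plane and is isotropic), or conditions (2) or (3) hold, both of which explicitly require $k_vL$ to be isotropic. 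In every branch $k_vL$ is isotropic.

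Having shown $k_vL$ is isotropic for all $v$, the Hasse--Minkowski theorem for ternary quadratic forms over the number field $k$ yields that $kL$ itself is isotropic. The only place where some care is needed is packaging the dyadic classification cleanly from Prop.~\ref{univ-one}, Prop.~\ref{univ-bi} and Prop.~\ref{univ-ter}; the remaining steps are essentially invocations of results already established in \S\ref{local}.
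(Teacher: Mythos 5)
Your argument is correct and is essentially the paper's own proof: the paper likewise deduces local isotropy at every finite place from Prop.~\ref{nondyadic}, Prop.~\ref{univ-one}, Prop.~\ref{univ-bi} and Prop.~\ref{univ-ter}, combines this with isotropy at the archimedean places, and concludes by the Hasse--Minkowski theorem. You have merely spelled out the case analysis that the paper leaves implicit.
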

\begin{proof}  By Prop.\ref{nondyadic}, Prop.\ref{univ-one}, Prop.\ref{univ-bi} and Prop.\ref{univ-ter}, one obtains that $k_vL_v$ is isotropic. Since $k_v L$ is also isotropic for all archimedean primes $v$, one concludes that $kL$ is isotropic by Hasse-Minkowski Theorem (see \cite[66:1]{OM}). 
\end{proof}

In \cite[Theorem 6]{Ro}, Ross further proved that every ternary quadratic form with non-trivial zero over $\Bbb Q$ and square-free discriminant over $\Bbb Z$ is universal. Such a result does not hold over  general number fields. 

\begin{exam} \label{ross-glo} Let $k=\Bbb Q(\sqrt{-pq})$ where $p\equiv 7 \mod 8$ and $q\equiv 5 \mod 8$ are primes with $(\frac{q}{p})=1$.  The quadratic form $f=x^2+y^2+z^2$ is locally universal over $\frak o_k$ with discriminant $1$ by \cite[Prop.2.2]{JWX}. Then $f$ is isotropic over $k_v$ for all $v\in \Omega_k$ by Prop.\ref{nondyadic}, Prop.\ref{univ-one}, Prop.\ref{univ-bi} and Prop.\ref{univ-ter}. Therefore $f$ is isotropic over $k$ by Hasse-Minkowski Theorem (see \cite[66:1]{OM}). However, $f$ is not universal over $\frak o_k$ by \cite[Theorem]{EH}. 
\end{exam}

Contrary to the positive definite case, one has the following result.

\begin{prop} \label{counter-glo} For any positive integer $N$, there are infinitely many classes of integral quadratic forms over $\Bbb Z$ which represents all integers between $-N$ and $N$ but are not universal. 
\end{prop}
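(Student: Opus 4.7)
The plan is to construct, for each prime $m$ in a suitable infinite family, the ternary indefinite form
\[
f_m(x,y,z) \;=\; m\,xy + z^2,
\]
and to show that $\{f_m\}$ supplies infinitely many classes of the required type.

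The first key observation is that $f_m$ represents an integer $n$ if and only if $n$ is a quadratic residue modulo $m$. Indeed, given $z_0 \in \BZ$ with $z_0^2 \equiv n \pmod{m}$, one sets $y=1$ and $x=(n-z_0^2)/m$ to obtain $f_m(x,1,z_0)=n$; conversely, every value of $f_m$ is congruent to a square modulo $m$. In particular $f_m$ misses every non-residue modulo $m$, so $f_m$ is not universal.

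To force $f_m$ to represent all of $[-N,N]$, I would arrange that $-1$ and every prime $p\leq N$ is a square modulo $m$, because then multiplicativity of the Legendre symbol makes every non-zero integer in $[-N,N]$ a square modulo $m$ as well (and $0=f_m(0,0,0)$). This is exactly the condition that $m$ splits completely in the compositum
\[
K_N \;=\; \BQ\bigl(\sqrt{-1},\ \sqrt{2},\ \sqrt{3},\ \ldots,\ \sqrt{p_k}\bigr),
\]
where $p_1<\cdots<p_k$ are the primes $\leq N$; by Chebotarev's density theorem the set of such primes is infinite. Alternatively, one may restrict to primes $m\equiv 1\pmod{8p_1\cdots p_k}$ and deduce the quadratic residue conditions from quadratic reciprocity, again using Dirichlet's theorem for infinitude.

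Finally, the forms $f_m$ lie in pairwise distinct $GL_3(\BZ)$-classes: the Gram matrix of $f_m$ has determinant $-m^2/4$, and this invariant is preserved under $\BZ$-unimodular change of variables, so distinct primes $m$ yield distinct classes. This produces the desired infinite family. The only mildly nontrivial step is the Chebotarev/reciprocity argument yielding primes on which all of $-1,p_1,\ldots,p_k$ are simultaneously quadratic residues; the remaining verifications are direct.
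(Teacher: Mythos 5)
Your proof is correct, and it takes a genuinely different and more elementary route than the paper. The paper works with $f=x^2+y^2-pqz^2$ for primes $p,q>N$ with $p\equiv q\equiv 3 \bmod 4$: it verifies local universality at all places other than $p$ and $q$ using the local classification from \S 2, invokes Hilbert reciprocity to handle the dyadic place, shows via spinor norms that $\mathrm{gen}(f)$ is a single proper class so that local representation implies global representation, and then observes that integers in $[-N,N]$ are units at $p$ and $q$. Your form $f_m=mxy+z^2$ bypasses all of the genus-theoretic machinery: the identity $f_m\bigl((n-z_0^2)/m,\,1,\,z_0\bigr)=n$ shows directly that the represented set is exactly the set of squares modulo $m$, so non-universality is immediate and representability of $[-N,N]$ reduces to a clean splitting condition on $m$, supplied by Chebotarev or by Dirichlet plus quadratic reciprocity. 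The class-separation step via the determinant $-m^2/4$ is also fine. What the paper's approach buys is a demonstration of its local universality criteria and the single-class-genus technique in action (the same tools used throughout \S 3), and its forms fail universality only at the two primes $p,q$ while being locally universal elsewhere; what your approach buys is a short, self-contained argument with an exact description of the represented integers. Both are valid proofs of the stated proposition.
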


\begin{proof}  Let $p$ and $q$ be primes with $p>N$ and $q>N$ such that $p\equiv q\equiv 3 \mod 4$. Consider indefinite quadratic form $f=x^2+y^2-pq z^2$.  

First of all, we claim that $f$ is not universal over $\Bbb Z$. Indeed, since $p\equiv q\equiv 3 \mod 4$, one obtains that $f$ is not locally universal at $v=p$ and $q$ by Prop.\ref{nondyadic}. In this case, the Hasse symbol of $f$ is not equal to $(\frac{-1,-1}{\Bbb Q_v})$ for $v=p$ and $q$ by \cite[58:6]{OM}.

On the other hand, we have that $f$ is locally universal at all odd primes $v\neq p, q$ by Prop.\ref{nondyadic}. This implies that the Hasse symbol of $f$ is equal to $(\frac{-1,-1}{\Bbb Q_v})$ for all odd primes $v\neq p, q$.

Since the Hasse symbol of $f$ at $\Bbb R$ is equal to $(\frac{-1,-1}{\Bbb R})$, one concludes that the Hasse symbol of $f$ at $v=2$ is equal to $(\frac{-1,-1}{\Bbb Q_2})$  by the Hilbert reciprocity law \cite[71:18]{OM}. Therefore $f$ is isotropic over $\Bbb Q_2$ by \cite[58:6]{OM}. By Prop.\ref{univ-ter}, one obtains that $f$ is locally universal over $\Bbb Z_2$. 

By the same proof of Prop.\ref{one-class}, one concludes that $\text{gen} (f)$ contains a single proper class. Therefore each integer which is represented by $f$ locally will be represented by $f$. Since all integers between $-N$ and $N$ are not divisible by $p$ and $q$, these integers are represented by $f$ at primes $v=p$ and $q$. This implies that all integers between $-N$ and $N$ are represented by $f$ as desired. 

Since there are infinitely many such pairs of primes $(p, q)$ as above, one gets infinitely many quadratic forms as above. The proof is complete. 
\end{proof}

\section{Potential universal quadratic forms}

In this section, we consider the following generalization of universal lattices. 

\begin{defi}  Let $k$ be a number field and $L$ be an $\frak o_k$-lattice. 

(1) We say that $L$ is potentially universal if there is a finite extension $K/k$ such that $\frak o_k \subset Q(\widetilde{L})$ with $\widetilde{L}=L\otimes_{\frak o_k} \frak o_K$.

(2) An element $\alpha\in \frak o_k$ is potentially represented by $L$ if there is a finite extension $K/k$ such that $\alpha\in Q(\widetilde{L})$ with $\widetilde{L}=L\otimes_{\frak o_k} \frak o_K$.
\end{defi}

Let $\bar k$ be an algebraic closure of $k$,  $\frak o_{\bar k}$ be the ring of all algebraic integers in $\bar k$ and $\frak o_{\bar k}^\times$ be the group of units in $\frak o_{\bar k}$.
 For any $\delta\in \frak o_{\bar k}$, we set 
$$ \frak o_{\bar k}(\delta)= \{ x\in \frak o_{\bar k}: \ (x, \delta) =\frak o_{\bar k} \} .$$  

\begin{lem} \label{surjective} With the above notations, one has
$$ \frak o_{\bar k}(\delta)= \frak o_{\bar k}^\times + \delta \frak o_{\bar k} $$ for any $\delta\in \frak o_{\bar k}$.
\end{lem}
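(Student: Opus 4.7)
The forward inclusion $\frak o_{\bar k}^\times + \delta\frak o_{\bar k} \subseteq \frak o_{\bar k}(\delta)$ is immediate: writing $x = u + \delta y$ with $u \in \frak o_{\bar k}^\times$ exhibits $u = x - \delta y$ as an element of $(x,\delta)\frak o_{\bar k}$, so this ideal contains a unit and therefore equals $\frak o_{\bar k}$.

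For the reverse inclusion, given $x \in \frak o_{\bar k}(\delta)$, the task is to produce a unit $u \in \frak o_{\bar k}^\times$ with $u \equiv x \pmod{\delta\frak o_{\bar k}}$; the element $y = (x-u)/\delta$ is then automatically in $\frak o_{\bar k}$ and yields $x = u + \delta y$. By hypothesis there exist $a, b \in \frak o_{\bar k}$ with $ax + b\delta = 1$, and I would first pass to a number field $K \subset \bar k$ containing $x, \delta, a, b$, so that already $(x,\delta)\frak o_K = \frak o_K$.

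The plan is to construct $u$ explicitly inside a quadratic extension. For $D \in \frak o_K$ to be chosen, set $L = K(\sqrt{D})$ and $u = x + \delta\sqrt{D}$. Since $D$ is an algebraic integer, so is $\sqrt{D}$; hence $u - x = \delta\sqrt{D} \in \delta\frak o_{\bar k}$ for any such $D$, and the required congruence is built into the construction. The condition that $u$ be a unit of $\frak o_L$ (and hence of $\frak o_{\bar k}$) is that $N_{L/K}(u) = x^2 - \delta^2 D$ lies in $\frak o_K^\times$. Writing $\eta = x^2 - \delta^2 D$, the problem reduces to finding a unit $\eta \in \frak o_K^\times$ with $\eta \equiv x^2 \pmod{\delta^2 \frak o_K}$, after which $D = (x^2 - \eta)/\delta^2 \in \frak o_K$ recovers the desired $D$.

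I expect the main obstacle to be the existence of such $\eta$: in a fixed number field $K$ the natural map $\frak o_K^\times \to (\frak o_K/\delta^2)^\times$ can fail to be surjective, the cokernel being controlled by the narrow ray class group of $K$ modulo $\delta^2$. This obstruction is to be removed by replacing $K$ by a suitable finite extension inside $\bar k$ — for instance, a sufficiently high layer of the tower of ray class fields of conductor dividing $\delta^2$, in which every residue class in $(\frak o/\delta^2)^\times$ is represented by a global unit. Since the built-in congruence $u - x \in \delta\frak o_{\bar k}$ is preserved under enlargement of $K$, the resulting $u \in \frak o_{\bar k}^\times$ supplies the desired decomposition $x = u + \delta\cdot(-\sqrt{D}) \in \frak o_{\bar k}^\times + \delta\frak o_{\bar k}$.
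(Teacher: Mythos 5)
Your forward inclusion and the algebra of the quadratic-extension step are fine: $u=x+\delta\sqrt{D}$ is automatically congruent to $x$ modulo $\delta\frak o_{\bar k}$, and it is a unit precisely when $\eta=x^2-\delta^2D$ is a unit of $\frak o_K$, so everything reduces to producing a unit $\eta\in\frak o_{K'}^\times$ with $\eta\equiv x^2\pmod{\delta^2\frak o_{K'}}$ in some finite extension $K'$. But that is exactly where the proof stops being a proof. The existence of a global unit in a prescribed residue class after a finite base extension is the entire content of the lemma --- it is precisely the Corollary the paper deduces \emph{from} it --- and if you had it you would not need the detour through $K(\sqrt{D})$ at all: a unit $u\equiv x\pmod{\delta\frak o_{K'}}$ immediately gives $x=u+\delta\cdot\frac{x-u}{\delta}$. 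So your construction reduces the problem to an equivalent instance of itself, and the reduced statement is left unproved. The closing assertion that in ``a sufficiently high layer of the tower of ray class fields'' every class in $(\frak o/\delta^2)^\times$ is represented by a global unit is a capitulation claim for \emph{ray} class groups: the cokernel of $\frak o_{K}^\times\to(\frak o_K/\delta^2)^\times$ is $\ker\bigl(\mathrm{Cl}_{\delta^2}(K)\to \mathrm{Cl}(K)\bigr)$, and you need this kernel to die after base change. The ordinary principal ideal theorem (which the paper does use elsewhere) only kills the ordinary class group and does not give this; the ray-class analogue is a genuinely stronger statement which you neither prove nor cite precisely. That is a real gap, and it sits at the only hard point of the lemma.

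For comparison, the paper avoids class field theory here entirely. Given $\gamma$ coprime to $\delta$, it chooses an even $m$ with $\gamma^m\equiv 1\pmod{\delta\frak o_K}$ and constructs monic polynomials $f=x^m+\cdots+a_1x+1$ and $g=x^m+\cdots+b_0$ over $\frak o_{\bar k}$ satisfying $\delta^m g(x)=\gamma^m f\bigl(\frac{1+\delta x}{\gamma}\bigr)$, by solving the associated lower-triangular linear system locally at each completion and gluing with the Chinese Remainder Theorem. Any root $\beta$ of $g$ is an algebraic integer, $(1+\delta\beta)/\gamma$ is then a root of $f$ and hence a unit because $f$ has constant term $1$, and $\gamma=u^{-1}+\delta(u^{-1}\beta)$ gives the decomposition inside the splitting field of $g$. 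To salvage your route you would have to either supply a proof of the unit-representability claim (at which point the lemma is already proved and $\sqrt{D}$ is superfluous) or replace that claim by an explicit construction of this kind.
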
 

\begin{proof} Since  $$ \frak o_{\bar k}(\delta)= \begin{cases} \frak o_{\bar k}^\times  \ \ \ & \text{if $\delta=0$} \\  
\frak o_{\bar k} \ \ \ & \text{if $\delta\in \frak o_{\bar k}^\times$} \end{cases} $$ by the definition of $\frak o_{\bar k}(\delta)$, the result holds obviously for these two cases.

For $\delta\in \frak o_{\bar k}\setminus \frak o_{\bar k }^\times$ with $\delta \neq 0$, we only need to show  $ \frak o_{\bar k}(\delta) \subseteq \frak o_{\bar k}^\times + \delta \frak o_{\bar k} $. For any $\gamma \in \frak o_{\bar k}(\delta)$, one needs to construct two polynomials 
$$f(x)=x^{m}+a_{m-1}x^{m-1}+...+a_{1}x+1 \ \ \ \text{and} \ \ \ g(x)=x^{m}+b_{m-1}x^{m-1}+...+b_{1}x+b_{0}$$
over $\frak o_{\bar k}$ such that  
$${\delta}^{m} g(x) ={\gamma}^m f(\frac{1+\delta x}{\gamma})  \ \ \ \ \ \text{or} \ \ \ \ \  \gamma^m f(x)= \delta^m g(\frac{\gamma x -1}{\delta}).  $$ 
Equivalently, the following linear equations 
 $$
        \left[
        \begin{array}{ccccc}
        (-1)^{0}C^{0}_{m}& 0& 0& \cdots&0\\ 
        
        (-1)^{1}C^{1}_{m}& (-1)^{0}C^{0}_{m-1}& 0& \cdots&0\\
        
        (-1)^{2}C^{2}_{m}& (-1)^{1}C^{1}_{m-1}& (-1)^{0}C^{0}_{m-2}& \cdots&0\\
        
        \vdots& \vdots& \vdots& \ddots& \vdots\\
        
        (-1)^{m}C^{m}_{m}& (-1)^{m-1}C^{m-1}_{m-1}& (-1)^{m-2}C^{m-2}_{m-2}& \cdots&(-1)^{0}C^{0}_{0}
        \end{array}
        \right]
        \left[
        \begin{array}{ccccc}
        1\\
        \delta  b_{m-1}\\
        \delta^{2}b_{m-2}\\
        \vdots\\
        \delta^{m}b_{0}
        \end{array}
        \right]
        =
        \left[
        \begin{array}{ccccc}
        1\\
        \gamma a_{m-1}\\
        \gamma^{2}a_{m-2}\\
        \vdots\\
        \gamma^{m}
        \end{array}
        \right ] $$
 are solvable over $\frak o_{\bar k}$ for some positive integer $m$, where $a_1, \cdots, a_{m-1}$ and $b_0, b_1, \cdots, b_{m-1}$ are variables. 

  Let $K/k$ be a finite extension such that $\delta, \gamma\in K$. Choose a positive even integer $m$ such that $\gamma^m \equiv 1 \mod \delta \frak o_K$. By Chinese Remainder Theorem \cite[21:2]{OM}, the above linear equations are solvable over $\frak o_K$ if and only if they are solvable over all completions of $\frak o_K$. 

If $v\in \Omega_K\setminus \infty_K$ with $\ord_v(\delta)=0$, one obtains that the above linear equations are solvable over $\frak o_{K_v}$ by arbitrarily choosing values of $a_1, \cdots, a_{m-1}$ in $\frak o_{K_v}$ and inverting the matrix. 

If  $v\in \Omega_K\setminus \infty_K$ with $\ord_v(\delta)>0$, one obtains that $\ord_v(\gamma)=0$ by $\gamma\in \frak o_{\bar k}(\delta)$. By taking $$b_0=\cdots=b_{m-2}=0 \ \ \ \text{ and } \ \ \ b_{m-1}= (1-\gamma^m)\delta^{-1} \in \frak o_K , $$ one concludes that the above equations is solvable over $\frak o_{K_v}$. 
\end{proof}

One can reformulate Lemma \ref{surjective} as the following interesting result. 

\begin{cor} For any ideal $\frak a$ of $\frak o_{\bar k}$, the natural map $\frak o_{\bar k}^\times \rightarrow (\frak o_{\bar k}/\frak a)^\times$ is surjective. 
\end{cor}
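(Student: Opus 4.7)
The plan is to reduce the assertion to the principal-ideal case already handled by Lemma \ref{surjective}. Surjectivity amounts to showing: given $\gamma \in \frak o_{\bar k}$ whose class is a unit in $\frak o_{\bar k}/\frak a$, one can find a genuine unit $u \in \frak o_{\bar k}^\times$ with $u \equiv \gamma \pmod{\frak a}$.

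First I would unpack the hypothesis that $\gamma + \frak a \in (\frak o_{\bar k}/\frak a)^\times$. This means there exists $\beta \in \frak o_{\bar k}$ with $\gamma\beta \equiv 1 \pmod{\frak a}$, i.e.\ $\delta := 1 - \gamma\beta \in \frak a$. The key observation is that this single element $\delta \in \frak a$ already satisfies $(\gamma, \delta) = \frak o_{\bar k}$, because $\gamma\beta + \delta = 1$ forces any common divisor of $\gamma$ and $\delta$ to be a unit. Thus $\gamma$ lies in $\frak o_{\bar k}(\delta)$ in the notation of Lemma \ref{surjective}.

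Next I would invoke Lemma \ref{surjective} directly to write $\gamma = u + \delta\beta'$ for some $u \in \frak o_{\bar k}^\times$ and some $\beta' \in \frak o_{\bar k}$. Since $\delta \in \frak a$, we get $\gamma - u = \delta\beta' \in \frak a$, and hence $u \equiv \gamma \pmod{\frak a}$, proving surjectivity of $\frak o_{\bar k}^\times \to (\frak o_{\bar k}/\frak a)^\times$.

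I do not anticipate a serious obstacle here: the corollary is essentially a reformulation of Lemma \ref{surjective}, and the only mild subtlety is that $\frak a$ need not be finitely generated, which is why one must extract a suitable single coprime element $\delta \in \frak a$ rather than work with $\frak a$ as a whole. That extraction is forced on us the moment we interpret ``unit modulo $\frak a$'', so the argument is essentially a one-line reduction.
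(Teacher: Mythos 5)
Your argument is correct and is essentially identical to the paper's own proof: both extract a single element $\delta\in\frak a$ with $(\gamma,\delta)=\frak o_{\bar k}$ (the paper from $(\gamma)+\frak a=\frak o_{\bar k}$, you from the explicit relation $\gamma\beta+\delta=1$) and then apply Lemma \ref{surjective} to write $\gamma$ as a unit plus an element of $(\delta)\subseteq\frak a$. No issues.
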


\begin{proof}  Let $x\in \frak o_{\bar k}$ with  $\overline{x}\in  (\frak o_{\bar k}/\frak a)^\times$. Then $(x)+\frak a = \frak o_{\bar k}$. There is $\delta\in \frak a$ such that $x\in  \frak o_{\bar k}(\delta)$. By Lemma \ref{surjective}, there is $y\in \frak o_{\bar k}^\times$ such that $x-y\in (\delta) \subseteq \frak a$ as desired. 
\end{proof}

The following theorem is known as the principal idea theorem in class field theory (see \cite[Chapter IV, (8.6) Theorem] {Neu}).  

\begin{thm}\label{PIT} Every ideal of $k$ becomes an principal ideal in the Hilbert class field of $k$. 
\end{thm}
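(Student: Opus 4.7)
The plan is to follow the classical strategy of Furtw\"angler and Artin, which reduces the statement to a purely group-theoretic fact about the transfer (Verlagerung) map. Let $H$ denote the Hilbert class field of $k$, and let $H^{(2)}$ denote the Hilbert class field of $H$. Since $H^{(2)}$ is characterized intrinsically as the maximal unramified abelian extension of $H$, the composite $H^{(2)}/k$ is Galois; set $G=\Gal(H^{(2)}/k)$. Because $H$ is itself the maximal unramified abelian extension of $k$, the commutator subgroup $G'=[G,G]$ coincides with $\Gal(H^{(2)}/H)$, which is abelian, so $G''=1$.

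Next I would invoke Artin reciprocity twice: applied to $H/k$ it yields an isomorphism $\Cl(k)\cong G/G'$, and applied to $H^{(2)}/H$ it yields $\Cl(H)\cong G'$. The crucial compatibility to check is that under these identifications the natural extension-of-classes map $j\colon\Cl(k)\to \Cl(H)$, $\overline{\frak a}\mapsto \overline{\frak a\frak o_H}$, corresponds to the group-theoretic transfer $\Ver\colon G/G'\to G'/G''=G'$. This is a standard functoriality statement for the Artin map: passing from a Frobenius element for a prime $\frak p$ of $k$ to Frobenius elements for the primes of $H$ above $\frak p$ is precisely the transfer operation on the Galois side.

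Having made this reduction, the theorem becomes the purely group-theoretic claim that for any finite group $G$ with $G''=1$ the transfer $\Ver\colon G/G'\to G'$ is identically trivial\,---\,this is Furtw\"angler's theorem. A clean attack is to work inside the integral group ring $\BZ[G]$: choose a system of coset representatives for $G'$ in $G$, write the transfer out as an ordered product of conjugates of the chosen representatives, and then exploit the abelianness of $G'$ together with an inductive argument on the lower central series to collapse this product to the identity.

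The main obstacle is Furtw\"angler's theorem itself; the class-field-theoretic reduction above is essentially formal once one has Artin reciprocity and the compatibility of the Artin map with restriction of the base field. I expect the group-theoretic step to require delicate commutator manipulations (which historically motivated a large amount of early combinatorial group theory) but to remain entirely elementary, in contrast to the arithmetic machinery needed for the reduction.
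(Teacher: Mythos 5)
The paper does not prove this statement at all: it is the classical Principal Ideal Theorem of class field theory, quoted with a citation to Neukirch, \emph{Class Field Theory}, Chapter IV, (8.6). Your proposal is precisely the proof given in that reference (and in every modern treatment): form $G=\Gal(H^{(2)}/k)$, identify $\Gal(H^{(2)}/H)$ with $G'$ (so $G''=1$, since $H^{(2)}/H$ is abelian and, conversely, the fixed field of $G'$ is an unramified abelian extension of $k$ hence lies in $H$), use Artin reciprocity for $H/k$ and for $H^{(2)}/H$ together with the compatibility of the Artin map under change of base field to translate the extension-of-ideals map $\Cl(k)\to\Cl(H)$ into the transfer $\Ver\colon G/G'\to G'/G''$, and finish with Furtw\"angler's group-theoretic theorem that this transfer vanishes whenever $G''=1$. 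All of these reductions are correct, and you correctly identify where the real work lies. The one caveat is that you leave Furtw\"angler's theorem itself unproved, and the attack you sketch for it (collapsing the product of conjugated coset representatives by direct commutator manipulation and induction on the lower central series) is essentially Furtw\"angler's original, notoriously intricate argument; the proof actually carried out in the cited source is Witt's (following Artin and Iyanaga), which sidesteps the combinatorics by a determinant, or Fitting-ideal, computation in the group ring $\BZ[G/G']$ applied to the relation module of $G$. If you intend to write your version out in full, you should expect that step to be substantially harder than the phrase ``delicate commutator manipulations'' suggests, though the class-field-theoretic reduction you give is complete and matches the standard proof.
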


By applying Theorem \ref{PIT}, one can obtain the following result. 
\begin{lem}\label{rep} Let $k$ be a number field. If $\delta$ is a non-zero element in $\frak o_k$, then there is a finite extension $K/k$ with $b_1, \cdots, b_n, \sqrt{\delta} \in \frak o_K$ such that any non-zero element $x\in \frak o_k$ can be written as 
$$ x=b_1^{r_1} b_2^{r_2} \cdots b_n^{r_n} (1+2 \rho(x) \sqrt{\delta}) $$ where $r_1\geq 0, \cdots, r_n\geq 0$ are integers and $\rho(x)\in \frak o_K$. 
\end{lem}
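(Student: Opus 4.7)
The plan is to take $K/k$ to be a finite extension containing $\sqrt{\delta}$ and the Hilbert class field $H_k$ of $k$, and then to exploit the finiteness of the ray class group $\Cl_{2\sqrt{\delta}}(K)$ together with that of the image of $\frak o_K^\times$ in $(\frak o_K/2\sqrt{\delta}\frak o_K)^\times$ to distill an arbitrary $x\in\frak o_k$ into a fixed finite list of generators. Since $K\supseteq H_k$, Theorem \ref{PIT} makes every $\frak p\frak o_K=(\eta_\frak p)$ principal; hence any nonzero $x\in\frak o_k$ admits a factorization $x=u\prod_\frak p\eta_\frak p^{e_\frak p}$ in $\frak o_K$ with $u\in\frak o_K^\times$ and $e_\frak p=v_\frak p(x)$.

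I would then assemble the $b_i$'s from three pieces: (i) the generators $\eta_\frak p$ for the finitely many primes $\frak p\mid 2\delta$ of $\frak o_k$; (ii) generators $\eta_{\frak p_j}$ for primes $\frak p_1,\ldots,\frak p_s$ of $\frak o_k$ whose classes $[\frak p_j\frak o_K]$ generate the finite subgroup $H\subseteq\Cl_{2\sqrt{\delta}}(K)$ produced by all $[\frak p\frak o_K]$ with $\frak p\nmid 2\delta$; and (iii) unit lifts $\tilde g_1,\ldots,\tilde g_t\in\frak o_K^\times$ of a generating set of the finite image $U$ of $\frak o_K^\times$ in $(\frak o_K/2\sqrt{\delta}\frak o_K)^\times$.

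For the main argument, split $x=u\prod_{\frak p\mid 2\delta}\eta_\frak p^{e_\frak p}\cdot\prod_{\frak p\nmid 2\delta}\eta_\frak p^{e_\frak p}$. The ray class of the second factor lies in $H$, so expressing it in the chosen generating set with nonnegative exponents (possible because $H$ is finite) produces $\prod_j[\eta_{\frak p_j}]^{m_j}$, whence $\prod_{\frak p\nmid 2\delta}\eta_\frak p^{e_\frak p}=\tilde u\prod_j\eta_{\frak p_j}^{m_j}\cdot w$ with $\tilde u\in\frak o_K^\times$ and $w\equiv 1\pmod{2\sqrt{\delta}}$. Applying the same finite-group trick to $u\tilde u$ via the $\tilde g_k$'s gives $u\tilde u=\prod_k\tilde g_k^{n_k}\cdot w'$ with $n_k\geq 0$ and $w'\equiv 1\pmod{2\sqrt{\delta}}$, and the identity $(1+2\rho_1\sqrt{\delta})(1+2\rho_2\sqrt{\delta})=1+2(\rho_1+\rho_2+2\rho_1\rho_2\sqrt{\delta})\sqrt{\delta}$ collapses $ww'$ to a single factor $1+2\rho\sqrt{\delta}$. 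The only real obstacle is arranging nonnegativity of all exponents $r_i$, handled uniformly by the finiteness of $H$ and $U$; the deep ingredients are Theorem \ref{PIT} and the finiteness of ray class groups from class field theory.
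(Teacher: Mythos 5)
Your overall architecture (principalize via Theorem \ref{PIT}, then reduce the part of $x$ coprime to $2\sqrt{\delta}$ to a fixed finite list using a finite class-group-type invariant) is in the spirit of the paper's proof, and your treatment of the unit $u\tilde u$ is correct: since the $\tilde g_k$ are units, the congruence $u\tilde u\equiv\prod_k\tilde g_k^{n_k}\pmod{2\sqrt{\delta}\,\frak o_K}$ really does give $u\tilde u=\prod_k\tilde g_k^{n_k}(1+2\rho'\sqrt{\delta})$ with $\rho'\in\frak o_K$, because you may divide the error term by $\prod_k\tilde g_k^{n_k}$ inside $\frak o_K$. The gap is in the step before that. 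Equality of ray classes $[\prod_{\frak p\nmid 2\delta}\frak p^{e_\frak p}\frak o_K]=\prod_j[\frak p_j\frak o_K]^{m_j}$ in $\Cl_{2\sqrt{\delta}}(K)$ only yields $\prod_{\frak p\nmid 2\delta}\eta_\frak p^{e_\frak p}=\tilde u\prod_j\eta_{\frak p_j}^{m_j}\cdot\alpha$ with $\alpha\in K^\times$ satisfying the \emph{multiplicative} congruence $v_{\frak q}(\alpha-1)\ge v_{\frak q}(2\sqrt{\delta})$ at the primes $\frak q\mid 2\sqrt{\delta}$; the fractional ideal $(\alpha)$ equals $\prod_{\frak p}\frak p^{e_\frak p}\prod_j\frak p_j^{-m_j}\frak o_K$, which is not integral unless $\prod_j\frak p_j^{m_j}$ divides $\prod_{\frak p}\frak p^{e_\frak p}$ --- and it almost never does (take $x$ coprime to every $\frak p_j$). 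So your $w$ is not in $\frak o_K$, and $(w-1)/(2\sqrt{\delta})$ is not an algebraic integer, which is exactly what the lemma demands of $\rho(x)$. Put additively: you only learn $\prod_{\frak p}\eta_\frak p^{e_\frak p}\equiv u''\prod_j\eta_{\frak p_j}^{m_j}\pmod{2\sqrt{\delta}\,\frak o_K}$, and converting this into $\prod_{\frak p}\eta_\frak p^{e_\frak p}=u''\prod_j\eta_{\frak p_j}^{m_j}(1+2\rho\sqrt{\delta})$ with $\rho$ integral requires dividing by $\prod_j\eta_{\frak p_j}^{m_j}$, which is not a unit.

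This is precisely the difficulty the paper's proof is organized around, and it is why Lemma \ref{surjective} exists: every residue class mod $2\sqrt{\delta}$ coprime to $2\sqrt{\delta}$ contains a \emph{unit of $\frak o_{\bar k}$}. The paper does not match the coprime part of $x$ against a fixed list of non-unit prime generators; instead it enlarges $K$ to contain finitely many unit representatives $\eta_1,\dots,\eta_t$ (one for each coset of $2\sqrt{\delta}\frak o_{K_2}$ in $\frak o_{K_2}\cap\frak o_{\bar k}(2\sqrt{\delta})$), so that the principal generator $a$ of the coprime part satisfies $a=\eta_{i_0}+2\sqrt{\delta}\mu$ and hence $a=\eta_{i_0}(1+2\sqrt{\delta}\,\eta_{i_0}^{-1}\mu)$ with $\eta_{i_0}^{-1}\mu\in\frak o_K$ precisely because $\eta_{i_0}$ is invertible in $\frak o_{\bar k}$. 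Your argument needs this lemma (with the attendant further field extension), or some substitute for it; finiteness of $\Cl_{2\sqrt{\delta}}(K)$ alone cannot produce the integral factor $1+2\rho(x)\sqrt{\delta}$.
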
 

\begin{proof} Let $K_1=k(\sqrt{\delta})$ and $K_2$ be the Hilbert class field of $K_1$. Let $\frak p_1, \cdots, \frak p_m$ be all prime ideals of $\frak o_{K_1}$ containing $2\sqrt{\delta} \frak o_{K_1}$. By , there is $a_i\in \frak o_{K_2}$ such that $\frak p_i\frak o_{K_2}=(a_i)$ for $1\leq i\leq m$. 

Consider the action of additive group $(2\sqrt{\delta}\frak o_{K_2}, +)$ on the set $\frak o_{K_2}\cap \frak o_{\bar k}(2\sqrt{\delta})$ by translation. Since $\delta\neq 0$ and $\frak o_{K_2}/(2\sqrt{\delta} \frak o_{K_2})$ is finite, the set $\frak o_{K_2}\cap \frak o_{\bar k}(2\sqrt{\delta})$ can be decomposed into finitely many orbits
$$ \frak o_{K_2}\cap \frak o_{\bar k}(2\sqrt{\delta}) = \bigcup_{i=1}^t (\xi_i + 2\sqrt{\delta} \frak o_{K_2}) \ \ \ \text{with} \ \ \ \xi_i= \eta_i+ 2\sqrt{\delta} \tau_i \in \frak o_{K_2}\cap \frak o_{\bar k}(2\sqrt{\delta})  $$ by Lemma \ref{surjective}, where $\eta_i\in \frak o_{\bar k}^\times$ and $\tau_i\in \frak o_{\bar k}$ for $i=1, \cdots, t$. 

By Dirichlet Unit Theorem \cite[33:10]{OM}, the group of units $\frak o_{K_2}^\times$ is generated by $u_1, \cdots, u_s$. Let $$K=K_2(\eta_1, \cdots, \eta_t, \tau_1, \cdots, \tau_t) $$ with $$ \{b_1, \cdots, b_n\} = \{ a_1, \cdots, a_m, \eta_1, \cdots, \eta_t, u_1, \cdots, u_s, u_1^{-1}, \cdots, u_s^{-1} \} . $$

For any $x\in \frak o_k$ with $x\neq 0$, one has the decomposition $$x\frak o_{K_1} = \frak a \cdot \prod_{i=1}^m \frak p_i^{l_i} \ \ \ \text{with} \ \ \ (\frak a, 2\sqrt{\delta} \frak o_{K_1})=1 $$ over $\frak o_{K_1}$, where $l_1\geq 0, \cdots, l_m\geq 0$ are integers. Since $\frak a\frak o_{K_2} =a\frak o_{K_2}$ by Theorem \ref{PIT}, one obtains that $$a\in \frak o_{K_2}\cap \frak o_{\bar k}(2\sqrt{\delta})= \bigcup_{i=1}^t (\xi_i + 2\sqrt{\delta} \frak o_{K_2})  . $$ There is $1\leq i_0\leq t$ such that 
$$ a-\xi_{i_0}=a-(\eta_{i_0} + 2\sqrt{\delta} v_{i_0}) \in 2\sqrt{\delta} \frak o_{K_2} . $$ This implies that $a=\eta_{i_0}(1+2\sqrt{\delta} \rho(x))$ with $\rho(x)\in \frak o_K$. Since $$x\frak o_{K_2}=(a \frak o_{K_2}) \prod_{i=1}^m (a_i^{l_i} \frak o_{K_2}) ,$$ there is $u\in \frak o_{K_2}^\times$ such that 
$$ x=u a  (\prod_{i=1}^m a_i^{l_i}) =(\prod_{i=1}^s u_i^{c_i}) (\prod_{i=1}^m a_i^{l_i})  \eta_{i_0} (1+2\sqrt{\delta} \rho(x))  $$ with $c_i\in \Bbb Z$ for $1\leq i\leq s$ as desired.  \end{proof}

The main result of this section is the following theorem. 

\begin{thm} \label{potential}  An $\frak o_k$-lattice $L$ with $\rank (L)\geq 2$ is potentially universal if and only if $1$ is potentially represented by $L$. 
\end{thm}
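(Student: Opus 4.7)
The forward direction is immediate, since $1 \in \frak o_k$ is in particular potentially represented whenever $L$ is potentially universal. For the converse, the plan is to use $1 \in Q(\widetilde{L})$ (for some $\widetilde{L} = L \otimes_{\frak o_k} \frak o_{K_0}$) to produce an anisotropic vector $v_0$ with $Q(v_0) = 1$, build a binary sublattice of norm-form shape $u^2 - D v^2$, and then appeal to Lemma~\ref{rep} to decompose arbitrary integers and to multiplicativity to propagate representability.

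First I would cut down to a convenient binary sublattice. Since $v_0 \in \widetilde{L}$ is anisotropic (with $Q(v_0) = 1$) and $(kL, Q)$ is non-degenerate, the orthogonal complement $v_0^\perp \subset K_0 L$ is non-degenerate of dimension $\geq \rank(L) - 1 \geq 1$. Picking any $w'' \in v_0^\perp$ with $Q(w'') \neq 0$ and scaling by a non-zero $\mu \in \frak o_{K_0}$ so that $\tilde w := \mu w'' \in \widetilde{L}$, the sublattice $M = \frak o_{K_0} v_0 \perp \frak o_{K_0} \tilde w$ carries the diagonal form $Q(a v_0 + c \tilde w) = a^2 - D c^2$ with $D := -Q(\tilde w) \in \frak o_{K_0} \setminus \{0\}$.

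Next I would apply Lemma~\ref{rep} over the number field $K_0$ with this non-zero $D$. This produces a finite extension $K_1/K_0$ and elements $b_1, \ldots, b_n, \sqrt D \in \frak o_{K_1}$ such that every non-zero $x \in \frak o_k \subseteq \frak o_{K_0}$ factors as $x = b_1^{r_1}\cdots b_n^{r_n}(1 + 2\rho(x)\sqrt D)$ with $\rho(x) \in \frak o_{K_1}$. Setting $K_2 = K_1(\sqrt{b_1}, \ldots, \sqrt{b_n})$, I would verify over $\frak o_{K_2}$ that $u^2 - Dv^2$ represents each building block: each $b_i$ via $u = \sqrt{b_i}, v = 0$, and each $1 + 2\rho\sqrt D$ via $u = 1 + \rho\sqrt D, v = \rho$, thanks to the identity $(1 + \rho\sqrt D)^2 - D\rho^2 = 1 + 2\rho\sqrt D$. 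The Brahmagupta--Fibonacci identity $(u_1^2 - Dv_1^2)(u_2^2 - Dv_2^2) = (u_1 u_2 + D v_1 v_2)^2 - D(u_1 v_2 + v_1 u_2)^2$ then propagates representability through the product, giving $x \in Q(M \otimes_{\frak o_{K_0}} \frak o_{K_2}) \subseteq Q(L \otimes_{\frak o_k} \frak o_{K_2})$; the case $x = 0$ is trivial.

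The main obstacle is not in this assembly but upstream: the reduction of the infinite supply $\frak o_k$ to a fixed finite list of building blocks $b_1, \ldots, b_n$ together with the one-parameter family $\{1 + 2\rho\sqrt D\}$. That reduction is exactly Lemma~\ref{rep}, whose proof hinges on the principal ideal theorem (Theorem~\ref{PIT}) and is the only place where genuine class field theory enters. Once that lemma is in hand, what remains is only to ensure the binary sublattice $M$ has $D \neq 0$; this is secured by the non-degeneracy of $Q$ on $kL$ together with $\rank L \geq 2$, and everything else reduces to the two clean algebraic identities above.
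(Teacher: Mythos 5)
Your proof is correct and follows essentially the same route as the paper: reduce to a binary diagonal sublattice $\frak o\, v_0 \perp \frak o\, \tilde w$, invoke Lemma~\ref{rep} with $\delta$ equal (up to sign) to the nonzero value $Q(\tilde w)$, adjoin $\sqrt{b_1},\dots,\sqrt{b_n}$, and represent each factor of the decomposition --- the paper merely compresses your Brahmagupta--Fibonacci multiplicativity step into one explicit identity by also adjoining $\sqrt{-1}$. Your explicit insistence that the orthogonal vector be anisotropic is a point the paper leaves implicit (Lemma~\ref{rep} needs $\delta\neq 0$), so that extra care is warranted rather than superfluous.
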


\begin{proof} Sufficiency. By finite extension of the ground field if necessary, we simply assume that $L$ represents $1$ over $\frak o_k$. There is $x\in L$ such that $Q(x)=1$. Since $\rank(L)\geq 2$, there is $y\in L\cap (kx)^{\perp}$ satisfying $Q(y)\in \frak o_k$, where $(kx)^{\perp}$ is the orthogonal complement of $kx$ in $kL$. Let $$M=(\frak o_k x\perp \frak o_k y) \subseteq L. $$ One only needs to show that $M$ is potentially universal. Write $Q(y)=\delta\in \frak o_k$. By Lemma \ref{rep}, there is a finite extension $F/k$ with $b_1, \cdots, b_n, \sqrt{\delta} \in \frak o_F$ such that any non-zero element $\alpha \in \frak o_k$ can be written as 
$$ \alpha=b_1^{r_1} b_2^{r_2} \cdots b_n^{r_n} (1+2 \rho(\alpha) \sqrt{\delta}) $$ where $r_1\geq 0, \cdots, r_n\geq 0$ are integers and $\rho(\alpha)\in \frak o_F$. Let $K=F(\sqrt{-1},\sqrt{b_{1}},\sqrt{b_{2}},...,\sqrt{b_{n}})$. Then
$$\alpha =[\prod_{i=1}^{n}(\sqrt{b_{i}})^{r_{i}}(1+\sqrt{\delta}\rho(\alpha))]^{2}+\delta[\prod_{i=1}^{n}(\sqrt{b_{i}})^{r_{i}}(\sqrt{-1}\rho(\alpha))]^{2} \in Q(\widetilde{M})$$
where $\widetilde{M}= M\otimes_{\frak o_k} \frak o_K$.

Necessity is obvious. 
\end{proof}

Inspired by the referee's question, we provide another description of potentially universal quadratic forms.

\begin{cor}\label{another} An $\frak o_k$-lattice $L$ with $\rank (L)\geq 2$ is potentially universal if and only if $\frak n(L)= \frak o_k$.
\end{cor}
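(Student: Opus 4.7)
I would handle the two directions separately.

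\emph{Necessity.} If $L$ is potentially universal, fix a finite extension $K/k$ with $\frak o_k \subseteq Q(\widetilde{L})$, where $\widetilde{L} = L\otimes_{\frak o_k}\frak o_K$. Every $\alpha\in\frak o_k$ then lies in the norm ideal $\frak n(\widetilde{L})$, and a direct unwinding of the definitions gives $\frak n(\widetilde{L}) = \frak n(L)\cdot\frak o_K$ (both sides equal the ideal of $\frak o_K$ generated by $\{Q(x)\mid x\in L\}$, since $2B(x_i,x_j) = Q(x_i+x_j)-Q(x_i)-Q(x_j)$ is itself a $Q$-value). Hence $\frak n(L)\cdot\frak o_K = \frak o_K$, and because any proper ideal of $\frak o_k$ extends to a proper ideal of $\frak o_K$ by going-up, this forces $\frak n(L) = \frak o_k$.

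\emph{Sufficiency.} Assuming $\frak n(L) = \frak o_k$, by Theorem \ref{potential} it is enough to exhibit, for some finite extension $K/k$, an element $y \in \widetilde{L}$ with $Q(y) = 1$. Choose $x_1,\ldots,x_r\in L$ and $\lambda_1,\ldots,\lambda_r\in\frak o_k$ with $\sum_i \lambda_i\,Q(x_i) = 1$, and let $K_0 = k(\sqrt{\lambda_1},\ldots,\sqrt{\lambda_r})$. Then $z_0 := \sum_i \sqrt{\lambda_i}\,x_i \in \widetilde{L}$ satisfies $Q(z_0) = 1 + C$, where $C := 2\sum_{i<j}\sqrt{\lambda_i\lambda_j}\,B(x_i,x_j) \in \frak o_{K_0}$; when the $x_i$ are pairwise orthogonal $C$ vanishes and we are done immediately.

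In the general case I would correct $z_0$ by looking for $y = z_0 + t\,w$ with $w \in \widetilde{L}$ and $t$ in a further finite extension $K/K_0$, leading to the quadratic equation $t^2\,Q(w) + 2t\,B(z_0, w) + C = 0$. The hypothesis $\rank(L) \geq 2$ supplies vectors $w$ linearly independent from $z_0$, and $\frak n(L) = \frak o_k$ ensures that such a $w$ can be chosen so that the discriminant $B(z_0, w)^2 - C\,Q(w)$ becomes a square in some finite extension whose ring of integers contains the resulting $t$ --- either by arranging $Q(w)$ to be a unit after enlarging $K_0$ (so the quadratic-formula root is integral after adjoining the discriminant), or by using an isotropic direction $Q(w) = 0$ with $B(z_0, w)$ dividing $C$ to reduce to a linear equation $t = -C/(2B(z_0,w))$. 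The main obstacle is precisely this integrality step: a solution in $\overline{k}$ is automatic, but pinning it down inside some $\frak o_K$ is where the hypothesis $\frak n(L) = \frak o_k$ must be used more deeply than the initial relation $\sum_i \lambda_i Q(x_i) = 1$, by invoking it to prescribe $Q(w)$ modulo the ideal generated by the naive denominators of $t$.
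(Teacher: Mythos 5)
Your necessity argument is fine and matches what the paper leaves implicit. The sufficiency direction, however, has a genuine gap, and it sits exactly where you yourself locate it: the integrality of the correction parameter $t$. Everything up to the equation $t^{2}Q(w)+2tB(z_0,w)+C=0$ is routine; the entire content of the corollary is in showing that this equation (or some substitute) has a root in the ring of integers of a \emph{finite} extension, and neither of your two proposed routes closes this. The route ``arrange $Q(w)$ to be a unit'' is circular: if you could produce $w\in\widetilde{L}$ with $Q(w)\in\frak o_{\bar k}^{\times}$ you would already be finished, since $(1/\sqrt{Q(w)})\,w$ lies in $L\otimes_{\frak o_k}\frak o_{K(\sqrt{Q(w)})}$ and has $Q$-value $1$; so this reformulation is essentially equivalent to the statement being proved, not a reduction of it. The isotropic route needs $2B(z_0,w)$ to divide $C$ in some $\frak o_K$, and the bare hypothesis $\frak n(L)=\frak o_k$ gives no such divisibility; arranging it is again the whole problem.

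The paper supplies the missing arithmetic input as follows. It first cuts down to a binary sublattice $M$ with $\frak n(M)=\frak o_k$, adjoins $\sqrt{-\det M}$ to make $kM$ isotropic, and passes to the Hilbert class field (the principal ideal theorem, Theorem \ref{PIT}) to write $M=\frak o_k x_1+\frak o_k y_1$ with $Q(x_1)=0$, $Q(y_1)=a$, $B(x_1,y_1)=b/2$ and $(a,b)=\frak o_k$. The key step is then Lemma \ref{surjective}: every element of $\frak o_{\bar k}$ coprime to $b$ is a \emph{unit} of $\frak o_{\bar k}$ plus a multiple of $b$, so $a=u+b\xi$ with $u\in\frak o_{\bar k}^{\times}$, whence $\frac{\xi}{\sqrt u}x_1-\frac{1}{\sqrt u}y_1$ is an integral vector of $Q$-value $1$ over $k(\xi,\sqrt u)$, and Theorem \ref{potential} finishes. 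Lemma \ref{surjective} (equivalently, surjectivity of $\frak o_{\bar k}^{\times}\to(\frak o_{\bar k}/\frak a)^{\times}$) is a nontrivial fact proved by an explicit polynomial construction combined with local solvability; some ingredient of this strength is unavoidable here, and your sketch offers no substitute for it.
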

\begin{proof}  One only needs to show sufficiency. 
We first claim that there is a binary $\frak o_k$ sub-lattice $M$ of $L$ such that $\frak n(M)= \frak o_k$. 
Suppose $\rank (L)\geq 3$. Let $S$ be a finite set of non-archimedean primes of $k$ containing all dyadic primes such that $\ord_v(\det(L))=0$ for $v\not\in (S\cup \infty_k)$. Since $\frak n(L)=\frak o_k$, one can choose $x_v, y_v \in L_v$ such that $$\frak n(\frak o_{k_v} x_v+ \frak o_{k_v} y_v) = \frak o_{k_v}$$ for each $v\in S$.  By the number field version of \cite[Lemma 1.6]{HKK} (see also \cite[\S 3]{HKK}), there are $x, y\in L$ which approximate $x_v, y_v$ at each $v\in S$ respectively such that $$M=\frak o_k x+ \frak o_k y \ \ \ \text{with} \ \ \ \det(M) \in \frak o_{k_v}^\times \ \ \ \text{for all $v\not \in (S\cup \infty_k)$ except one prime $v_0$} $$ and $\ord_{v_0}(\det(M))=1$. Since $\ord_{v_0}(\det(M))=1$, one concludes that $\frak n(M_{v_0})=\frak o_{k_{v_0}}$. 
Therefore the claim follows and one can assume $\rank (L)=2$. By Theorem \ref{potential}, we only need to prove that $1$ is potentially represented by $L$.

Let $K=k(\sqrt{-det(L)})$. Then the quadratic space $kL$ becomes isotropic over $K$. Since $$\frak n(L\otimes_{\frak o_k} \frak o_K) = \frak n(L) \otimes_{\frak o_k} \frak o_K , $$
one can further assume that the quadratic space $kL$ is isotropic. Write $$L=\frak a x + \frak b y \ \ \ \text{ with } \ \ \ Q(x)=0$$ where $\frak a$ and $\frak b$ are fractional ideals of $k$.
By Theorem \ref{PIT}, one can assume that both $\frak a$ and $\frak b$ are principal by scalar extension to the Hilbert class field of $k$. Therefore one can write $$ L= \frak o_k x_1+ \frak o_k y_1 \ \ \ \text{with} \ \ \ Q(x_1)=0, \ \ \ Q(y_1)=a \ \ \ \text{and} \ \ \ B(x_1, y_1)= \frac{b}{2} $$ where $a, b\in \frak o_k$. Since $\frak n(L)= \frak o_k$, one concludes that $(a, b)= \frak o_k$.  
By Lemma \ref{surjective}, there are $u\in \frak o_{\bar k}^\times$ and $\xi \in \frak o_{\bar k}$ such that $a=u+b\xi$. Let $E=k(\xi, \sqrt{u})$. Then $$\frac{\xi}{\sqrt{u}} x_1 - \frac{1}{\sqrt{u}} y_1 \in (L\otimes_k \frak o_E) \ \ \ \text{and} \ \ \ Q(\frac{\xi}{\sqrt{u}} x_1 - \frac{1}{\sqrt{u}} y_1) =1 $$ as desired. 
\end{proof}

\begin{cor} \label{st}  If $L$ is a potentially universal $\frak o_k$-lattice for a number field $k$, then $L\otimes_{\frak o_k} \frak o_K$ is a potentially universal $\frak o_K$-lattice for any finite extension $K/k$. 
\end{cor}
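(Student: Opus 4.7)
The plan is to deduce this immediately from Theorem \ref{potential} by transporting a representation of $1$ through a compositum of fields. Assume $L$ is a potentially universal $\frak o_k$-lattice. By Theorem \ref{potential} (applied over $k$), there is a finite extension $F/k$ and a vector $x \in L \otimes_{\frak o_k} \frak o_F$ with $Q(x) = 1$. First I would form the compositum $KF$ inside a fixed algebraic closure $\bar k$; this is a finite extension of $K$.

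Next I would observe that the identification
\[
L \otimes_{\frak o_k} \frak o_{KF} \;=\; (L \otimes_{\frak o_k} \frak o_K) \otimes_{\frak o_K} \frak o_{KF}
\]
combined with the inclusion $\frak o_F \hookrightarrow \frak o_{KF}$ shows that the image of $x$ in $L \otimes_{\frak o_k} \frak o_{KF}$ still satisfies $Q(x) = 1$. Hence $1$ is potentially represented by the $\frak o_K$-lattice $L \otimes_{\frak o_k} \frak o_K$ via the finite extension $KF/K$. Applying Theorem \ref{potential} in the reverse direction, now with $K$ in place of $k$ and $L \otimes_{\frak o_k} \frak o_K$ in place of $L$, yields that $L \otimes_{\frak o_k} \frak o_K$ is potentially universal over $\frak o_K$. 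The rank hypothesis $\rank(L) \geq 2$ required by Theorem \ref{potential} is preserved under the tensor product, so the hypothesis is satisfied throughout.

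There is essentially no obstacle: the argument is a formal one-line reduction once Theorem \ref{potential} is available. As an alternative that avoids even invoking the compositum, I could appeal directly to Corollary \ref{another}: potential universality of $L$ is equivalent to $\frak n(L) = \frak o_k$, and since the norm ideal behaves well under extension, $\frak n(L \otimes_{\frak o_k} \frak o_K) = \frak n(L) \frak o_K = \frak o_K$, so another application of Corollary \ref{another} concludes. Either route gives the same essentially trivial corollary.
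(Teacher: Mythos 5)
Your argument is correct in substance, and in fact your ``alternative'' at the end is precisely the paper's proof: the paper deduces the corollary from Corollary \ref{another} together with the identity $\frak n(L\otimes_{\frak o_k}\frak o_K)=\frak n(L)\otimes_{\frak o_k}\frak o_K$. Your primary route --- pull a representation of $1$ over some finite $F/k$ from Theorem \ref{potential}, push it into the compositum $KF$, and apply Theorem \ref{potential} again over $K$ --- is a genuinely different and equally valid reduction; it has the small advantage of not needing Corollary \ref{another} at all, while the paper's route via the norm ideal avoids any mention of composita and makes the base-change stability visibly formal. The one real omission in your write-up is the rank hypothesis. You assert that $\rank(L)\geq 2$ ``is preserved under the tensor product, so the hypothesis is satisfied throughout,'' but preservation does not establish that it holds to begin with: Corollary \ref{st} places no rank assumption on $L$, so before you may invoke Theorem \ref{potential} (or Corollary \ref{another}) you must rule out $\rank(L)=1$. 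The paper does this in its opening sentence: a rank-one lattice $\frak o_k x$ with $Q(x)=a$ represents only elements of the form $ac^2$, whose valuations at primes unramified in $K/k$ are congruent to $\ord_v(a)$ modulo $2$; since only finitely many primes ramify in a fixed finite extension, such a lattice cannot be potentially universal. You should add this one line; with it, either of your routes is complete.
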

\begin{proof} Since there are only finitely many primes of $k$ ramified in a fixed finite extension of $k$, one obtains that $\rank (L)\geq 2$.  The result follows from that fact $\frak n(L\otimes_{\frak o_k} \frak o_K) = \frak n(L) \otimes_{\frak o_k} \frak o_K $ and Corollary \ref{another}.   
\end{proof}

\bigskip

\noindent\textbf{Acknowledgements.}  We would like to thank the referee for pointing out incompleteness of the proof of Prop.\ref{univ-bi} in the early version of this paper. 
The first named author is supported by NSFC grant no.11631009.

\bibliographystyle{alpha}
%\bibliography{mybib1}
\end{document}